\documentclass[oneside,reqno,10pt]{amsart}

\usepackage[margin=1in]{geometry}
\usepackage{mathtools}
\usepackage{amsmath}
\usepackage{amsthm}
\usepackage{amssymb}
\usepackage{tikz}
\usepackage{tikz-cd}
\usepackage[all,cmtip]{xy}
\usepackage{float}
\usepackage{indentfirst}
\usepackage[mathscr]{euscript}
\usepackage{stmaryrd}
\usepackage{hyperref}
\usepackage{csquotes}
\usepackage{enumitem}

\DeclareFontFamily{U}{mathx}{}
\DeclareFontShape{U}{mathx}{m}{n}{<-> mathx10}{}
\DeclareSymbolFont{mathx}{U}{mathx}{m}{n}
\DeclareMathAccent{\widehat}{0}{mathx}{"70}
\DeclareMathAccent{\widecheck}{0}{mathx}{"71}

\newcommand{\Hom}{\mathrm{Hom}}

\newcommand{\cone}{\mathrm{Cone}}
\newcommand{\supp}{\mathrm{Supp}}
\newcommand{\Mod}{\mathrm{Mod}}
\newcommand{\HMod}{\mathrm{HMod}}

\newcommand{\cW}{\mathcal{W}}
\newcommand{\cY}{\mathcal{Y}}
\newcommand{\Prop}{\mathrm{Prop}}

\newcommand{\cC}{\mathcal{C}}
\newcommand{\Ob}{\mathrm{Ob}}
\newcommand{\Tw}{\mathrm{Tw}}
\newcommand{\Perf}{\mathrm{Perf}}
\newcommand{\RHom}{\mathrm{RHom}}
\newcommand{\cD}{\mathcal{D}}
\newcommand{\Z}{\mathbb{Z}}
\newcommand{\op}{^\mathrm{op}}
\newcommand{\vb}{\,|\,}

\newcommand{\cF}{\mathcal{F}}
\newcommand{\Cone}{\mathrm{Cone}}
\newcommand{\C}{\mathbb{C}}
\newcommand{\R}{\mathbb{R}}

\newcommand{\CW}{\mathrm{CW}}

\makeatletter
\def\blfootnote{\gdef\@thefnmark{}\@footnotetext}
\makeatother

\theoremstyle{plain}
\newtheorem{thm}{Theorem}[section]
\newtheorem{prop}[thm]{Proposition}
\newtheorem{lem}[thm]{Lemma}
\newtheorem{cor}[thm]{Corollary}

\newtheorem{assumption}[thm]{Assumption}

\newcommand{\s}{\mathrm{sgn}}

\theoremstyle{definition}
\newtheorem{dfn}[thm]{Definition}
\newtheorem{exa}[thm]{Example}
\newtheorem*{ack}{Acknowledgements}
\newtheorem{notation}[thm]{Notation}

\theoremstyle{remark}
\newtheorem{rmk}[thm]{Remark}

\numberwithin{equation}{section}

\title[Proper modules over Ginzburg dg algebras and compact Fukaya categories of plumbings]{Proper modules over Ginzburg dg algebras and compact Fukaya categories of plumbings}

\author{Wonbo Jeong}
\address[Wonbo Jeong]{Department of Mathematics and Center for Nano Materials\\ Sogang University\\ 35 Baekbeom-ro\\ Mapo-gu\\ Seoul 04107\\ Republic of Korea}
\email{wonbo.jeong@gmail.com}

\author{Dogancan Karabas}
\address[Dogancan Karabas]{Undergraduate Program, Temple University Japan Campus, Tokyo, Japan}
\email{dogancan.karabas@tuj.temple.edu}

\author{Sangjin Lee}
\address[Sangjin Lee]{Korea Institute for Advanced Study, 85 Hoegiro Dongdaemun-gu, Seoul 02455, Republic of Korea}
\email{sangjinlee@kias.re.kr}

\begin{document}

\begin{abstract}
	 We study {\em Ginzburg dg algebras} which appear at the intersection of representation theory and symplectic topology.
	First, we provide a collection of proper modules that generates all proper modules over a Ginzburg dg algebra, without assuming the Jacobi-finite condition. 
	Using this generation result, we study the immersed compact Fukaya category of a general plumbing space. 
	In particular, we prove a generation result for the compact Fukaya category and show that it is equivalent to the category of proper modules over the wrapped Fukaya category, and hence to the category of microlocal sheaves on the Lagrangian skeleton.
\end{abstract}

\maketitle

\tableofcontents

\section{Introduction}
\label{section introduction}

One of the main objects of study in this paper is the {\em Ginzburg dg algebra}, which appears at the intersection of {\em representation theory}, {\em symplectic topology}, and {\em related fields} such as {\em mathematical physics}.
A main goal of this paper is to prove that the (immersed) compact Fukaya category of a plumbing space is equivalent to the category of proper modules over its wrapped Fukaya category, which in turn is equivalent to the category of proper modules over the associated Ginzburg dg algebra (see Theorem \ref{thm intro compact Lagrangian}).
Moreover, this yields an equivalence between the compact Fukaya category of a plumbing space and the category of microlocal sheaves on its Lagrangian skeleton (see Corollary \ref{cor microlocal sheaves}).
To prove this, we establish a generation result for proper modules over Ginzburg dg algebras (see Theorem \ref{thm intro generation}).

In representation theory, Ginzburg \cite{gin06} introduced the notion of Ginzburg dg algebra as an example of a 3-Calabi--Yau algebra, also defined in \cite{gin06}. 
Since then, Ginzburg dg algebras have been studied extensively in representation theory (for example, see \cite{gin06, kel-yan11, kel11, Hermes16}).
They also appear in mathematical physics and algebraic geometry:
As the term ``3-Calabi--Yau algebra'' suggests, these algebras are closely related to the study of Calabi--Yau threefolds that mathematical physicists and algebraic geometers investigate.
Thus, it is not surprising that Ginzburg dg algebras, as natural examples of 3-Calabi--Yau algebras, arise in these areas of mathematics.

Ginzburg dg algebras also appear naturally in symplectic topology. 
Since symplectic topology has strong connections with mathematical physics and algebraic geometry, one could expect such a relation.
However, it is worth emphasizing that Ginzburg dg algebras arise in a purely symplectic-topological context as well.
More precisely, it is known that Ginzburg dg algebras are equivalent to the {\em wrapped Fukaya categories} of plumbing spaces, see \cite{asp21, kar-lee24-2, etg-lek17,etg-lek19,lek-ued20, Hu-Lau-Tan24}.
Thus, a deep understanding of Ginzburg dg algebras could contribute to the study of mathematical physics, algebraic geometry, and symplectic topology.

In the current paper, we first study proper modules (or equivalently, derived modules of finite total dimensions) over Ginzburg dg algebra, motivated from a symplectic topology problem. 
Especially, we characterize a collection of proper modules that generate all proper modules, see Theorem \ref{thm intro usual Ginzburg dg algebra}.
(After doing simple linear algebra, a corollary of Theorem \ref{thm intro usual Ginzburg dg algebra} would be a characterization of indecomposable modules, see examples in Section \ref{section applications and examples}.) 
We note that such generation results are known for {\em some} Ginzburg dg algebras (i.e., ones having finite dimensional Jacobi algebra), but not all. 
Theorem \ref{thm intro usual Ginzburg dg algebra} covers unknown cases.

After that, we solve the motivational symplectic topology problem by employing the generation result. 
The motivating problem concerns the {\em compact Fukaya category} of plumbing spaces. 
Here, the compact Fukaya category is the category of compact, exact, (possibly) immersed Lagrangians. 
(See Definition \ref{dfn compact Fukaya category} for more details.) 
Let us recall that compact Fukaya category is a subcategory of wrapped Fukaya category, as implied by \cite{Jeong-Karabas-Lee26}.
Since wrapped Fukaya categories of plumbings are equivalent to Ginzburg dg algebras, the Yoneda functor sends the compact Fukaya category into the category of modules over Ginzburg dg algebras. 
Moreover, compactness implies that the compact Fukaya category embeds into the category of proper modules.
The second main result of the paper is to show that the collection of generating modules is contained in the Yoneda image of the compact Fukaya category. 
This implies the equivalence between the compact Fukaya category of plumbings and the category of proper modules over the associated Ginzburg dg algebras (Theorem \ref{thm immersed module correspondence}), as well as a generation result for the compact Fukaya category (Theorem~\ref{thm intro compact Lagrangian}). 
Moreover, it implies the equivalence between the compact Fukaya category of plumbings and the category of microlocal sheaves on their Lagrangian skeleta (Corollary \ref{cor microlocal sheaves}).

In the rest of Section \ref{section introduction}, we provide more details on our motivations/ideas/results.
Let $Q$ be a quiver and $W$ be a formal sum of cycles of $Q$. 
The Ginzburg dg algebra $\Gamma_3(Q,W)$ of a quiver with potential $(Q,W)$ was first introduced in \cite{gin06} as an example of $3$-Calabi--Yau algebras. 
It extends to a more general case: 
If $Q$ is a {\em graded} quiver and $W$ is a formal sum of cycles of degree $(3-n)$, the same definition provides an $n$-Calabi--Yau algebra, which is also called Ginzburg dg algebra $\Gamma_n(Q,W)$.
See \cite{kel11}.
(We note that for $n=3$, the ungraded quiver $Q$ can be seen as the graded quiver with a simple graded quiver such that every edge is of degree $0$.)

As mentioned above, Ginzburg dg algebras (especially as $3$-Calabi--Yau algebras/categories) have appeared in many works in representation theory, algebraic geometry, mathematical physics, and related fields.
Some examples are as follows:
\cite{Amiot09, Keller11, kel-yan11, Derksen-Weyman-Zelevinsky08} studied cluster structures on the category of proper modules over $\Gamma_3(Q,W)$, denoted by $\Prop \Gamma_3(Q,W)$, under the assumption that $\Gamma_3(Q,W)$ is {\em Jacobi-finite}. 
(Recall that $\Gamma_3(Q,W)$ is Jacobi-finite if and only if its zeroth cohomology is finite-dimensional.)
Bridgeland and Smith \cite{Bridgeland-Smith15} related Bridgeland stability conditions on $\Prop \,\Gamma_3(Q,W)$ to the geometry of surfaces and quadratic differentials.
Moreover, a Jacobi-finite $\Gamma_3(Q,W)$ and $\Prop\, \Gamma_3(Q,W)$ could provide an computational example of \cite{Kontsevich-Soibelman08, Kontsecich-Soibelman10} that studied the wall-crossing phenomena on the space of stability conditions on $3$-Calabi--Yau categories and their generalized Donaldson--Thomas invariants.

We note that in the Jacobi-finite case, $\Prop \,\Gamma_3(Q,W)$ has a known collection of generating objects, i.e., a collection of proper modules that generates the category; see \cite{King94}. 
Thus, if a quiver $Q$ is equipped with a {\em generic} potential $W$, then there exists a generating collection of proper modules over $\Gamma_3(Q,W)$, since the genericity of $W$ implies the Jacobi-finiteness of $\Gamma_3(Q,W)$. 
Such generating collections have played an important role in many works, including some of the examples mentioned above.

However, not every pair $(Q,W)$ satisfies the Jacobi-finite condition. 
For example, if $Q$ has a cycle and if $W =0$, then $\Gamma_3(Q,W=0)$ has infinite dimensional zeroth cohomology. 
Thus, it would be a natural research direction to seek a specific collection of generating objects for $\Prop\, \Gamma_3(Q,W)$.

The first main result of the current article is the construction of a specific generating collection for $\Prop \, \Gamma_n(Q,0)$. 

\begin{thm}[Corollary \ref{cor generation of ginzburg dg algebra}]
	\label{thm intro usual Ginzburg dg algebra}
	Let $n \geq 3$, and let $Q$ be a non-positively graded quiver. 
	(Roughly speaking, the non-positively graded condition means that the graded quiver does not contain a positively graded cycle. 
	See Definition \ref{dfn non-positivcely grading condition} for the formal definition.)
	Then, we provide a specific collection of proper modules over $\Gamma_n(Q,0)$ that generates $\Prop\, \Gamma_n(Q,0)$.
\end{thm}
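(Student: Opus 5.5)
We plan to prove the statement in three stages: reduce proper modules to finite-dimensional representations of the zeroth cohomology, classify those via the structure of the path algebra, and lift back using the extension-closure of the generated subcategory.

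\textbf{Step 1 (reduction to the heart).} Under the non-positively graded assumption, $\Gamma := \Gamma_n(Q,0)$ is a connective dg algebra, i.e.\ $H^i(\Gamma)=0$ for $i>0$. Hence $\Prop\,\Gamma$ carries the standard t-structure, whose heart consists of finite-dimensional modules over $H^0(\Gamma)$. A proper module $M$ has only finitely many nonzero cohomology groups $H^i(M)$, each finite-dimensional. The truncation triangles then show that $M$ is an iterated extension of shifts of the $H^i(M)$, each regarded as a $\Gamma$-module via $\Gamma \to H^0(\Gamma)$. It therefore suffices to generate all finite-dimensional $H^0(\Gamma)$-modules.

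\textbf{Step 2 (the zeroth cohomology).} With $W=0$ the differential of $\Gamma$ sends degree $-1$ generators (the doubled arrows $a^*$ of degree $-1$ and the loops $t_i$) to commutator-type elements, so $H^0(\Gamma)$ is a quotient of the path algebra of the degree-$0$ part of the doubled quiver. For $n\ge 3$ the reversed arrows $a^*$ have degree $2-n-|a|$. In the non-positive setting the degree-$0$ arrows are exactly the degree-$0$ arrows of $Q$, together with those reversed arrows $a^*$ for which $|a| = 2-n$. The relations come from $d(t_i)=\sum[a,a^*]$, restricted to degree $0$. We would compute $H^0$ explicitly as the path algebra of a degree-zero quiver $Q_0$, possibly modulo preprojective-type relations when pairs $a,a^*$ both sit in degree $0$.

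\textbf{Step 3 (generating collection and the main obstacle).} Finite-dimensional modules over a path algebra (or a quotient of one) are not generated by simples alone when $Q_0$ has cycles, because nilpotent representations are built from simples by extension, whereas non-nilpotent ones (for instance, loops acting by an invertible scalar $\lambda$) are not. So the collection should consist of:
\begin{itemize}
\item the simples $S_i$ at vertices;
\item for each cycle (or primitive closed walk) $c$ in $Q_0$ and each $\lambda\in \mathbb{k}^\times$, the one-dimensional-per-vertex ``band'' module $B_{c,\lambda}$, with arrows along $c$ acting by $1$ except one acting by $\lambda$.
\end{itemize}
The key claim, and the main obstacle, is that every finite-dimensional $H^0$-module admits a filtration whose subquotients are simples or band modules (or their Jordan-block generalizations, which are extensions of the same band module). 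We would prove this by induction on dimension. Take $M$ and consider the operators given by the cycles of $Q_0$. If every cycle acts nilpotently, then $M$ has a simple submodule (a common kernel vector at some vertex), so we peel it off. Otherwise some cycle $c$ has an eigenvalue $\lambda\neq 0$. A corresponding eigenvector $v$ then generates, along $c$, a submodule; we try to choose $v$ so that the span of its transports along $c$ is a submodule isomorphic to a quotient of $B_{c,\lambda}$, or we pass to the quotient instead. Making this choice work is delicate when cycles share vertices or when preprojective relations intervene. In that case we would enlarge the collection to all modules where some cycle acts invertibly with one-dimensional spaces. If necessary, we would follow the approach used for gentle or string algebras.

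Finally, since $\langle\,\cdot\,\rangle$ (the thick/extension closure) is closed under extensions and shifts, Steps 1 and 3 together show that $\Prop\,\Gamma$ is generated by this collection.
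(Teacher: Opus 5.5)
\textbf{Steps 1 and 2 are fine; Step 3 has a genuine gap.} Step~1 is essentially the paper's Lemma~\ref{lem extra condition 1}, which builds the truncation triangles by hand. Step~2 can be sharpened. After normalizing to $2-n<d_e\le 0$, every $y_e$ and every $h_v$ has strictly negative degree, and $dh_v$ lies in degree $2-n<0$. So $H^0(\Gamma)$ is just the path algebra $kQ_0$ of the degree-$0$ arrows, with no preprojective relations.

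\textbf{Why Step 3 fails.} Your ``key claim'' is false, not merely delicate, because your list (vertex simples plus band modules attached to a \emph{single} cycle or closed walk) is too small. Take $Q$ with one vertex and two degree-$0$ loops $a,b$, so $H^0(\Gamma)=k\langle a,b\rangle$, which is wild.
\begin{itemize}
\item The $1$-dimensional module on which $a$ and $b$ both act by $1$ is already missing from your primary list.
\item Worse, let $N=k^2$ with $a$ acting as projection onto the first coordinate and $b$ as the swap of coordinates. $N$ is simple: the only $a$-invariant lines are the two axes, and $b$ exchanges them.
\item $N$ is not a composition factor of anything you propose. Your fallback modules are one-dimensional. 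On every band module, and on its Jordan-block versions, $a$ or $b$ acts nilpotently: a walk using both loops makes both nilpotent, and a walk using one loop kills the other. On $N$ neither loop is nilpotent.
\end{itemize}
Now fix any set $\Sigma$ of simple $H^0(\Gamma)$-modules. The full subcategory of $X\in\Prop\,\Gamma$ all of whose $H^i(X)$ have composition factors in $\Sigma$ is thick. This follows from the long exact cohomology sequence and the fact that Serre subcategories are closed under subquotients and extensions. Taking $\Sigma$ to be the composition factors of your list shows that $N$ is not even split-generated by it.

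Accordingly, your induction stalls on $N$: it has no vertex-simple submodule and no band submodule or quotient. No string/gentle-algebra classification exists for a wild algebra. Over a non-closed field, even a single loop gives simples $k[a]/(p)$ with $\deg p>1$ and no eigenvalue in $k$.

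\textbf{The missing idea.} Stop trying to classify, and take as generators the classes $\cC_{(C,s)}$ of Definition~\ref{dfn generators}.
\begin{itemize}
\item $C$ runs over vertices and connected \emph{unions} of degree-$0$ cycles, so for instance $N\in\cC_{(\{a,b\},0)}$.
\item Each class contains \emph{all} modules, of arbitrary dimension, that are supported on $V(C)$, killed by arrows outside $E(C)$, and satisfy the connectedness condition~(d).
\end{itemize}
With this choice no eigenvector selection is needed. After reducing to degree $0$ and to $\sim_M$-strongly connected support (Lemma~\ref{lem extra condition 3}), one has $E(M)=E(C_M)$ (Lemma~\ref{lem assigned cycle}). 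One then inducts on $\sum_v\dim M(L_v)$. If (d) fails for some arrow $e$, the submodule generated by $\mathrm{Im}\,M(\overline{x}_e)$ is proper and nonzero. Hence $M$ is an extension of two modules of smaller total dimension, and the induction closes.
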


An example illustrating Theorem \ref{thm intro usual Ginzburg dg algebra} is given in Section \ref{subsection example}.

\begin{rmk}
	\label{rmk usual Ginzburg} 
	\mbox{}
	\begin{enumerate}
		\item In the statement of Theorem \ref{thm intro usual Ginzburg dg algebra}, we do not explicitly describe the generating collection. 
		Roughly speaking, our construction characterizes indecomposable proper modules. 
		See Remarks \ref{rmk indecomposable} and \ref{rmk indecomposable modules in an example}.
		
		\item Theorem \ref{thm intro usual Ginzburg dg algebra} treats the case $W=0$.
		The reason for restricting to this potential is that this case admits a geometric model arising from symplectic topology. 
		This geometric model helps us formulate the main idea of the proof of Theorem \ref{thm intro usual Ginzburg dg algebra}. 
		However, we expect that Theorem \ref{thm intro usual Ginzburg dg algebra} can be generalized to arbitrary potentials $W$. 
		See Remark \ref{rmk with potential}.
		
		\item We note that some works, for example \cite{kel-yan11}, use the {\em completed} Ginzburg dg algebra. 
		However, according to \cite[Remark 6.2.1]{Booth-Goodbody-Opper25}, under the Jacobi-finite condition, a Ginzburg dg algebra and its completed version are quasi-equivalent.
	\end{enumerate}
\end{rmk}

Our main idea for proving Theorem \ref{thm intro usual Ginzburg dg algebra} originates from symplectic topology. 
In fact, by employing ideas from symplectic topology, we prove a more general version of Theorem \ref{thm intro usual Ginzburg dg algebra}, namely Theorem \ref{thm intro generation}.
Before stating this generalization and explaining the main idea of the proof, let us first describe the motivation coming from symplectic topology.

As mentioned at the beginning, \cite{kar-lee24-2} showed that for a graded quiver $Q$, $\Gamma_n(Q,0)$ is equivalent to the wrapped Fukaya category of a plumbing space $P$ equipped with a grading structure $\eta$, where $P$ is obtained by plumbing copies of $T^*S^n$ along $Q$ and $\eta$ respects the grading on $Q$. 
In fact, \cite{kar-lee24-2} computed the wrapped Fukaya category in a more general setting, namely for plumbing spaces obtained by plumbing arbitrary cotangent bundles and equipped with grading structures $\eta$.
The result is that the wrapped Fukaya category, denoted by $\cW(P;\eta)$, is (Morita equivalent to) an extended/generalized version of a Ginzburg dg algebra. 
This generalized version contains additional morphisms associated to each vertex of $Q$. 
See Theorem \ref{thm wrapped Fukaya category} and Remark \ref{rmk generalized Ginzburg dg algebra} for more details. 
Motivated by this, we refer to $\cW(P;\eta)$ as a {\em generalized} Ginzburg dg algebra/category.

\begin{rmk}
	\label{rmk generalized Ginzburg dga}
	As stated in Theorem \ref{thm wrapped Fukaya category}, the extra morphisms in a generalized Ginzburg dg algebra, which are added at each vertex, are generators of the chain complexes of based loop spaces. 
	Thus, the term ``generalized Ginzburg dg algebra'' can be replaced by ``usual Ginzburg dg algebra with based loop space coefficients''.
\end{rmk}

As a general version of Theorem \ref{thm intro usual Ginzburg dg algebra}, we prove Theorem~\ref{thm intro generation}.

\begin{thm}[Theorem \ref{thm generation of proper modules}]
	\label{thm intro generation} 
	Let $P$ be a plumbing space of dimension $2n \geq 6$, obtained by plumbing cotangent bundles of arbitrary manifolds and equipped with a non-positive grading structure $\eta$ (see Definition \ref{dfn non-positivcely grading condition}). 
	For the corresponding generalized Ginzburg dg algebra $\cW(P;\eta)$, we provide a specific collection of proper modules that generates $\Prop \,\cW(P;\eta)$.  
\end{thm}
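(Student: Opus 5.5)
We first make the target category explicit. By Theorem \ref{thm wrapped Fukaya category}, $\cW(P;\eta)$ is Morita equivalent to a dg category with one object per vertex $v$ of $Q$, corresponding to the cotangent fiber $F_v$ at a point of the plumbed manifold $M_v$. Its morphisms are generated by:
\begin{itemize}
\item chains on the based loop space $C_{-*}(\Omega_{p_v} M_v)$ at each vertex;
\item the arrows of $Q$ together with their formal opposites;
\item the Ginzburg loop $t_v$ in degree $2-n$.
\end{itemize}
The differential is $d t_v = \sum_a \pm[a,a^*]$ plus terms involving the based loop generators.

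A proper module $\mathcal{M}$ is then a finite-dimensional graded vector space $V=\bigoplus_v V_v$ with an $A_\infty$-action. We would use the non-positivity hypothesis on $\eta$ together with $n\ge 3$ to replace $\mathcal{M}$ by a minimal model concentrated in a controlled range of degrees. The first aim is a filtration argument: there should be a t-structure, or at least a length filtration, on $\Prop\,\cW(P;\eta)$ whose pieces are modules on which $t_v$ and the higher $A_\infty$ operations act trivially. This would reduce generation to modules that are essentially representations of the degree-$0$ truncation. Concretely, non-positivity makes $\cW(P;\eta)$ connective up to the loop-space factors, so that $\tau_{\le 0}$ gives a standard heart $\HMod$ of $H^0$-modules. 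Every proper module is then a finite iterated cone of shifts of objects in this heart, obtained by filtering by cohomological degree.

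Next we classify the finite-dimensional $H^0$-modules. Here $H^0\cW(P;\eta)$ is the path algebra of the degree-$0$ part of the doubled quiver, with $\C[\pi_1(M_v)]$ (or $H_0$ of the loop space) inserted at each vertex. The relations coming from $t_v$ disappear when $n\ge 3$, since $t_v$ has degree $2-n<0$. A finite-dimensional module over this algebra is built by successive extensions from modules of two types:
\begin{itemize}
\item \emph{local-system modules} supported at a single vertex, namely finite-dimensional representations of $\pi_1(M_v)$; these are geometrically the compact cores $M_v$ with local systems;
\item modules supported along \emph{cycles} of the degree-$0$ quiver on which the arrows act invertibly; these are geometrically immersed Lagrangians obtained by surgering cores along the cycle, equipped with a local system, i.e.\ a Jordan block parameter.
\end{itemize}
The generating collection we propose is these modules, together with their shifts. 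To show they generate, we use the standard Fitting-type decomposition. For a finite-dimensional representation, either some arrow composite around every cycle is nilpotent, in which case a socle or radical filtration peels off simple vertex modules, or some cycle acts with an invertible part, which splits off as a cycle-supported module. Induction on total dimension finishes the argument.

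The main obstacle is the first step: justifying that a proper module with nontrivial higher $A_\infty$ structure, and with loop-space coefficients in possibly positive homological degrees, really lies in the triangulated hull of its cohomology truncations. Our first approach is to show that the standard t-structure on $\Mod\,\cW(P;\eta)$ restricts to $\Prop$. This uses the non-positivity of $\eta$, so that $\cW$ is concentrated in nonpositive degrees and the truncations of a module are again modules. Finite total dimension then guarantees that only finitely many truncation steps occur. The delicate point is handling $C_{-*}(\Omega M_v)$ when $\pi_1(M_v)$ is infinite. We would handle it by noting that a finite-dimensional module over $C_{-*}(\Omega M_v)$ is governed by its $H_0$, that is, by a $\pi_1$-representation, because the algebra is connective.
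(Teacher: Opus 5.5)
Your first step is sound and is the paper's Lemma~\ref{lem extra condition 1}. Because every generator of $\cW$ has non-positive degree, the degree filtration (the standard t-structure) reduces everything to modules concentrated in degree $0$. These are finite-dimensional modules over $H^0\cW$, which, as you say, is the tensor algebra over $\prod_v k[\pi_1(M_v)]$ on the arrows $x_e$ with $d_e=0$.

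The gap is the classification step. Fitting's lemma splits a module along an \emph{endomorphism of the module}. The action of a closed path $c\in e_vH^0\cW e_v$ on $V_v$ is such an endomorphism only when $c$ is central. That holds for a single oriented cycle with trivial vertex groups, which is why the $M_{r,\lambda}$ of Section~\ref{subsection example} look like Jordan blocks. It fails once a vertex lies on two degree-$0$ cycles or has $\pi_1(M_v)\neq 1$.

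Take the self-plumbing of $S^n$ along two loops $a,b$ with $d_a=d_b=0$, so $H^0\cW=k\langle x_a,x_b\rangle$, and the module $k^2$ with $x_a\mapsto E_{12}$, $x_b\mapsto E_{21}$.
\begin{enumerate}
\item It is simple.
\item Each loop acts nilpotently but nonzero.
\item A composite of the two loops acts by a rank-one idempotent, whose Fitting splitting $\langle e_1\rangle\oplus\langle e_2\rangle$ is not a splitting into submodules.
\item No arrow acts invertibly.
\end{enumerate}
So both branches of your dichotomy fail.

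Worse, your collection does not generate even via cones. In this example, every composition factor of a vertex local system, or of a cycle module with invertible arrows, has each of $x_a,x_b$ acting by $0$ or invertibly. On the other hand, $K_0(\Prop\,\cW)\cong K_0(\mathcal H)$ is free on the simple objects of the degree-$0$ heart $\mathcal H$. Hence the class of this simple module is not in the span of your generators, and no sequence of cones and shifts reaches it. The same happens for one degree-$0$ loop at a vertex with $\pi_1(M_v)=\Z$: send the generator of $\pi_1$ to $\mathrm{diag}(1,2)$ and $x_e$ to the all-ones matrix.

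The missing idea is not to classify. Instead, define the generating class by a condition weak enough to contain every simple module and every vertex local system, and split off submodules generated by images of arrows. The paper proceeds as follows.
\begin{enumerate}
\item After the degree filtration, it filters further by the $\sim_M$-classes of vertices (mutual reachability through degree-$0$ morphisms acting nontrivially), using the partial order of Lemma~\ref{lem extra condition 3}.
\item It shows that the arrows acting nontrivially form a connected union $C_M$ of degree-$0$ cycles (Lemma~\ref{lem assigned cycle}).
\item It takes as generators the modules satisfying (a)--(d) of Definition~\ref{dfn generators}. Condition (d) only asks that, for each arrow $e=s\to t$ of $C$, the $\hom^0_{\cW^{op}}(L_s,L_s)$-saturation of $\mathrm{Im}\,M(\overline{x}_e)$ be all of $M(L_s)$.
\item If (d) fails for some $e$, the submodule $M_e$ generated by $\mathrm{Im}\,M(\overline{x}_e)$ is nonzero and proper. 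Induction on total dimension, applied to $M_e$ and $\Cone(M_e\to M)$, finishes the proof.
\end{enumerate}
Replacing your Fitting step by this induction, with your collection enlarged to all modules satisfying (a)--(d), is exactly what closes the argument.
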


As mentioned in Remark \ref{rmk usual Ginzburg} (1), the description of the generating modules we provide can be viewed as a characterization of indecomposable modules.

Our motivation and main idea for the proof of Theorem \ref{thm intro generation} arise from symplectic topology. 
In the second part of the article, namely Sections \ref{section the compact Fukaya category of plumbings} and \ref{section an alternative proof}, we study the motivating symplectic topology problem. 

Our motivation from symplectic topology is the study of the {\em compact Fukaya category}, especially its generation. 
We note that generation of wrapped Fukaya categories is well understood in the literature (see \cite{cdrgg17, gps2}), whereas generation of compact Fukaya categories are not well understood in general; see \cite{abo-smi12, li19} for some known cases.

This project originated from the idea of studying (generation of) compact Fukaya categories via wrapped Fukaya categories.  
More precisely, we note that the {\em compact Fukaya category} of a Weinstein manifold $X$, denoted by $\cF(X)$, can be viewed as a full subcategory of the wrapped Fukaya category $\cW(X)$. 
Applying the Yoneda functor to $\cF(X)$, {\em compactness} implies that the images are proper modules.
In other words,
$
\cY\left(\cF(X)\right) \subset \Prop\, \cW(X),
$
where $\cY$ denotes the Yoneda functor. 
In particular, for plumbing spaces we have
$
\cY\left(\cF(P;\eta)\right) \subset \Prop\, \cW(P;\eta).
$

Theorem \ref{thm intro generation} provides a collection of proper modules $\{M_i\}$ that generate $\Prop\,\cW(P;\eta)$. 
For each generator $M_i$, if there exists a compact Lagrangian $L_i \in \cF(P;\eta)$ such that $\cY(L_i)$ is isomorphic to $M_i$, then this gives an explicit collection of generators $\{L_i\}$ of $\cF(P;\eta)$. 
Theorem \ref{thm intro compact Lagrangian} establishes precisely this result.

\begin{thm}[Theorems \ref{thm immersed module correspondence} and \ref{thm compact Fukaya}]
	\label{thm intro compact Lagrangian}
	For each generating proper module $M$ given in Theorem~\ref{thm intro generation}, there exists a compact Lagrangian $L$ in $P$ such that the Yoneda image of $L$ is isomorphic to $M$. In particular:
	\begin{enumerate}
		\item The (immersed) compact Fukaya category $\cF(P;\eta)$, the category of proper modules $\Prop\,\cW(P;\eta)$, and the category of microlocal sheaves on the Lagrangian skeleton $\mathbb{L}$ of $P$ are equivalent:
		\[
		\cF(P;\eta) \;\simeq\; \Prop\,\cW(P;\eta) \;\simeq\; \mu\mathrm{Sh}(\mathbb{L}).
		\]
		
		\item The category $\cF(P;\eta)$ is generated by a collection of Lagrangians that we describe explicitly.
	\end{enumerate}
\end{thm}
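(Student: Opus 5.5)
The plan is to first make explicit what the generating modules of Theorem~\ref{thm intro generation} are, and then realize each one geometrically. Via the equivalence $\cW(P;\eta)\simeq$ generalized Ginzburg dg category (Theorem~\ref{thm wrapped Fukaya category}), a proper module is a finite-dimensional representation of the graded quiver. At each vertex $v$ it comes with a finite-dimensional module over $C_{-*}(\Omega Q_v)$, i.e.\ a local system on the core $Q_v$. Each edge carries linear maps, compatible with the Ginzburg relations; for $W=0$ these are the preprojective-type relations at each vertex. I expect the generators to be of two kinds. The first kind are supported at a single vertex: simple $C_{-*}(\Omega Q_v)$-modules, i.e.\ rank-one or irreducible local systems on the zero section $Q_v$. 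The second kind are ``string/band'' type modules: they are supported along a path or cycle in $Q$, are one-dimensional at each visited vertex, and use nontrivial arrow maps. For cycles, a band carries a monodromy parameter $\lambda\in\mathbb{k}^\times$, together with Jordan-block versions.

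For each generator I would construct a compact exact Lagrangian $L$ in $P$, possibly immersed, as follows. For vertex modules, take $L$ to be the zero section $Q_v$ equipped with the corresponding local system. For a path $v_0\to v_1\to\cdots\to v_k$, take the Lagrangian surgery (Polterovich/Lagrange handle) of the consecutive zero sections at the plumbing points. Traversing a cycle produces an immersed Lagrangian (e.g.\ a surgery of spheres meeting twice), with the monodromy recorded by a local system on the resulting $S^1$-direction. The grading $\eta$ and the non-positive grading condition will be used to ensure that these surgeries are gradable and exact. For immersed pieces they also ensure that the bounding cochain / unobstructedness condition holds: the degree constraints from Definition~\ref{dfn non-positivcely grading condition} kill the curvature term $m_0$ by degree reasons. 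Here $n\ge3$ gives enough room to make double points sufficiently separated.

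Next I would compute the Yoneda image $\cY(L)=\hom_{\cW}(\,\cdot\,,L)$ on the generators of $\cW(P;\eta)$, namely the cotangent fibers $T_{x_v}^*Q_v$. For surgered Lagrangians, $L$ meets each relevant fiber transversely in one point per visit to $v$. So the module has the correct dimension vector, and the fiber-to-fiber morphisms (edge generators and based loop generators) act by counting triangles. By the surgery exact triangle, $\cY(L)$ is a twisted complex/cone of the vertex modules, with connecting maps given by the intersection points; I would identify these with the edge maps of $M$. This gives $\cY(L)\simeq M$.

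Finally, the conclusions follow formally. Since $\cY$ is fully faithful on $\cF(P;\eta)$, since $\cY(\cF(P;\eta))\subset\Prop\,\cW(P;\eta)$, and since the image contains a generating set, the functor is essentially surjective after idempotent/twisted completion. This gives $\cF\simeq\Prop\,\cW$ and also item (2). Composing with the known equivalence $\Prop\,\cW\simeq\mu\mathrm{Sh}(\mathbb{L})$ (via $\cW\simeq$ compact objects of $\mu\mathrm{Sh}$ by GPS) gives the rest of (1). The main obstacle I expect is the band/immersed case. There one must verify unobstructedness, and check that the holomorphic polygon counts reproduce exactly the monodromy $\lambda$ and the Jordan-block structure rather than a gauge-equivalent but different module. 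I would handle this first by a local model computation near a single plumbing point, and then glue.
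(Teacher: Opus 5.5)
There is a genuine gap, and it starts with the generators. You guess them, and the guess is wrong. The generating modules of Theorem \ref{thm generation of proper modules} are \emph{all} modules in $\cC_{(C,s)}$ for $C\in\mathbb{B}=V(Q)\sqcup\mathrm{Cyc}$, not string/band modules.

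Such a module is concentrated in one degree. So every generator of negative degree acts by zero: this covers $y_e$, $h_v$, $x_e$ with $d_e<0$, and the negative-degree chains in $C_{-*}(\Omega(M_v\setminus\mathrm{pt}))$. In particular no preprojective relations are in play. The module is therefore just a $\pi_1(M_v)$-representation on each $E(L_v)$, together with \emph{arbitrary} matrices $E(\overline{x}_e)$ for the degree-$0$ arrows of $C$. The only constraint is the spanning condition (d) of Definition \ref{dfn generators}. Stalks can have any dimension and the matrices need not be invertible.

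Once $C$ contains two degree-$0$ cycles through a common vertex, the problem is wild. Examples are $\{ca,da\}$ in the example after Definition \ref{dfn Cyc}, or two degree-$0$ loops $a,b$ at one vertex with $M_v=S^n$, where the heart is the category of finite-dimensional $k\langle a,b\rangle$-modules. That heart has simple modules, in every dimension $d\geq 2$, on which both loops act nontrivially. Such a simple is not a composition factor of any module in your vertex/string/band list, since in each of those at most one loop acts. Objects whose cohomology lies in a fixed Serre subcategory of the heart form a thick subcategory, so your family does not even split-generate $\Prop\,\cW$. Hence you neither prove the first assertion of the theorem nor can you deduce (1)--(2) from the modules you do realize. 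Your final formal step and the $\mu\mathrm{Sh}$ comparison are fine.

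The geometric steps fail for the same reason.
\begin{itemize}
\item Surgery with local systems only encodes invertible gluing data, so it cannot produce a non-invertible $E(\overline{x}_e)$.
\item Take a cycle generator such as $M_\lambda$ in Section \ref{subsection example}. It is simple in the heart, so it is not in the triangulated hull of $M_x,M_y$. Your claim that $\cY(L)$ is a cone of vertex modules by the surgery exact triangle is therefore false. The second surgery along a cycle is a self-surgery, i.e.\ a Maurer--Cartan deformation, not a cone.
\item Unobstructedness \emph{by degree reasons} also fails. The curvature and the terms $m_k(b,\dots,b)$ live in degree $2$, where $\hom^*(L,L)$ is in general nonzero. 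It can contain classes from $H^2(M_v)$, or plumbing-point generators $p_e$ of degree $1-d_e$ and $q_e$ of degree $n-1+d_e$, either of which can equal $2$.
\end{itemize}

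The missing idea is to avoid surgery altogether, as the paper does:
\begin{itemize}
\item Take the immersed Lagrangian $L=\bigcup_{v\in V(C)}M_v$.
\item Give each $M_v$ the local system $\rho_v$ with stalk $E(L_v)$. It comes from the factorization of the loop-space action through $H_0(\Omega(M_v\setminus\mathrm{pt}))=\pi_1(M_v)$.
\item Use the bounding cochain $b=\sum_{d_e=0}E(\overline{x}_e)\otimes p_e$, supported on the positive-action plumbing-point generators.
\end{itemize}
The Maurer--Cartan equation holds because the boundary of any contributing disk projects to a nonzero directed cycle in $H_1(Q;\Z)$. Admissibility is Theorem \ref{thm immersed generation recalled}, or Lemma \ref{lem immersed Lagrangian}. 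Then $\cY(L,b)\simeq E$ is checked directly on generators:
\begin{itemize}
\item the stalks are $E(L_v)$;
\item loop chains act through $\rho_v$;
\item $x_e$ acts by $b_e$, via a local disk count in the plumbing sector;
\item everything of nonzero degree acts by zero.
\end{itemize}
This realizes arbitrary matrices, indeed every module concentrated in one degree, which is exactly what the wild generators require.
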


\begin{rmk}
	Theorem \ref{thm intro compact Lagrangian} (1) is a generalization of \cite{nad-zas09,nad09} which implies the same result for cotangent bundles. 
	See Corollary \ref{cor microlocal sheaves}. Theorem \ref{thm intro compact Lagrangian} (2) recovers and generalizes several well-known generation results in symplectic topology, including generation of the compact Fukaya category of plumbings along trees by Lagrangian cores equipped with local systems due to \cite{abo-smi12}.
	The key new feature is that degree-$0$ cycles in the quiver can produce additional generators. 	
	See Remark~\ref{rmk cores generate compact Lagrangians} for a more detailed discussion.
\end{rmk}

We note that although Theorem \ref{thm intro generation} precedes Theorem \ref{thm intro compact Lagrangian}, the motivation and main idea come from Theorem \ref{thm intro compact Lagrangian}. 
More precisely, we first conjectured that a certain collection of compact Lagrangians generates $\cF(P;\eta)$. 
See Section \ref{subsection motivation and assumptions} for the motivation behind this conjecture.
We then translated the geometric conditions expected of these compact Lagrangian generators into conditions on proper modules, and these translated conditions characterize the generating proper modules appearing in Theorem \ref{thm intro generation}.

We should remark that the expected generating Lagrangians could be {\em immersed} (if the plumbing quiver $Q$ has either a degree $0$ cycle or a self-loop).
Thus, the relevant Floer theory is immersed Lagrangian Floer theory. 
We briefly review the immersed Lagrangian Floer theory in the wrapped setting appearing in the literature:
In \cite[Section 6.2]{cdrgg17}, the authors defined an immersed version of Lagrangian Floer theory using Hamiltonian perturbations. 
This suggests the existence of a Fukaya category whose objects include exact immersed Lagrangians, but the authors did not verify all details required to establish the full $\mathcal{A}_\infty$-structure, so the resulting structure was called a ``Fukaya pre-category''. 
(See \cite[Remark~8.14]{cdrgg17} for more details.)
On the other hand, \cite{gao17} resolved this issue using pearly complexes instead of the Hamiltonian-perturbation approach.
Thus, the remaining problem was to establish the equivalence between these two constructions: one defined using Hamiltonian perturbations and the other using pearly complexes. 
This equivalence is proved in our separate paper \cite{Jeong-Karabas-Lee26}. 
In the same paper, we also extend the generation theorem of \cite{cdrgg17} to immersed Lagrangians equipped with certain bounding cochains (equivalently, augmentations) (see Theorem \ref{thm immersed generation recalled}), showing that such Lagrangians are generated by cocores and hence are objects of the wrapped Fukaya category considered in this paper.

Using this fact, for each generator $M$ appearing in Theorem \ref{thm generation of proper modules}, we construct a (possibly immersed) Lagrangian in the wrapped Fukaya category whose Yoneda image is isomorphic to $M$ in Section \ref{section the compact Fukaya category of plumbings}.
 
 We also note that Section \ref{section an alternative proof} provides an alternative proof of Theorem \ref{thm intro compact Lagrangian} using a slightly different method.
 The argument there uses the setting of \cite{gps2}. 
 We include this alternative proof because it gives a more pictorial explanation of the result. 
 However, the drawback of this approach is that it relies on an ad hoc argument using a specific Weinstein sectorial covering of the plumbing space.
 
 In the last section of the paper, we discuss applications and an example. 
 As an application, in Section \ref{sec:stab} we construct a Bridgeland stability condition on the category of interest, namely the category of proper modules over (generalized) Ginzburg dg algebras. 
 In particular, its stability space is nonempty, and these categories could provide a useful test ground for studying Bridgeland stability conditions and examining related theories such as \cite{Kontsevich-Soibelman08}. 
We also give a construction of cluster categories in Section \ref{sec:CYtri}.
Finally, in Section \ref{subsection example}, we fix a specific Ginzburg dg algebra and carry out explicit computations illustrating our results.

\begin{ack}
During this work, Wonbo Jeong was supported by the National Research Foundation of Korea (NRF) grants funded by the Korean government (MIST, No. RS-2025-24803252) and through the G-LAMP program (MOE, RS-2024-00441954).
Part of this work was carried out while Dogancan Karabas was at the Kavli Institute for the Physics and Mathematics of the Universe (Kavli IPMU), the University of Tokyo, where the author was supported by the World Premier International Research Center Initiative (WPI), MEXT, Japan. Further parts of this work were carried out at the Kyoto University Institute for Advanced Study (KUIAS), in the Hiraoka Laboratory, and at Temple University, Japan Campus. 
Sangjin Lee is supported by a KIAS Individual Grant (MG094401) at Korea Institute for Advanced Study.
\end{ack}

\section{Preliminaries} 
\label{section preliminaries} 

\subsection{Dg categories and their modules}
\label{subsection preliminaries on category theory}
We set notations and introduce basic definitions in category theory.

A differential graded (dg) category is a category enriched over the symmetric monoidal category of complexes over a fixed commutative ring $k$. It can also be viewed as an $A_{\infty}$-category whose higher structures, i.e., compositions of order greater than 2, vanish. For further details on dg categories, readers may refer to \cite{kel08}, and for a review of $A_{\infty}$-categories, one can consult \cite{sei08}. We use $d$ for the differential and $\circ$ for compositions of morphisms, and we omit the latter whenever it is convenient. When introducing a dg category, we follow the convention of providing the following five items:
\begin{enumerate}[label = (\roman*)]
	\item {\em Objects:} We list the objects in the category.
	\item {\em Generating morphisms:} We give a set of generating morphisms. They generate all the morphisms as an algebra, not as a module. We will not explicitly mention the existence of identity morphisms, but every object has the identity morphism, and generation can utilize these identity morphisms.
	\item {\em Degrees:} For each generating morphism, we specify its degree.
	\item {\em Differentials:} For each generating morphism, we specify its differential.
	\item {\em Relations:} We specify the relations between generating morphisms. We will omit this item if the generating morphisms freely generate all other morphisms.
\end{enumerate}

Given a dg category $\cC$, we denote by $\Ob\,\cC$ (or simply $\cC$ when it is clear from the context) the
collection of objects in $\cC$.
We write $\Hom^*_{\cC}(A,B)$ (or simply $\Hom^*(A,B)$) for the cohomology of the morphism space $\hom^*(A,B)$ of $\cC$. 
Given a subset $S$ of closed degree zero morphisms in $\cC$, we denote the dg localization of $\cC$ at the morphisms in $S$ by $\cC[S^{-1}]$.

We write $\Tw\, \cC$ for the dg category of twisted complexes in $\cC$, which is a
pretriangulated envelope of $\cC$, and $\Perf\,\cC$ for the split-closure (i.e., idempotent completion) of $\Tw\,\cC$. 
We say dg categories $\cC$ and $\cD$ are quasi-equivalent if there is a chain of dg categories and dg functors
\[\cC\xleftarrow{\sim}\cC'\xrightarrow{\sim}\cD\]
for some dg category $\cC'$, where each dg functor in the chain is a quasi-equivalence. We say $\cC$ and $\cD$ are pretriangulated (resp.\ Morita) equivalent if $\Tw\,\cC$ and $\Tw\,\cD$ (resp.\ $\Perf\,\cC$ and $\Perf\,\cD$) are quasi-equivalent.

Let $\{L_i\}$ be a collection of objects in $\cC$, and let $\cD$ be the full dg subcategory of $\cC$ whose object set is $\{L_i\}$. Then, $\{L_i\}$ generates (resp.\ split-generates) $\cC$ if $\cD$ is pretriangulated (resp.\ Morita) equivalent to $\cC$.

We define $\Mod\, k$ to be the dg category of $\Z$-graded cochain complexes of $k$-modules, localised at quasi-isomorphisms. Equivalently, it is the dg category of cofibrant complexes of $k$-modules. 
We define \(\Perf\, k\) to be the full dg subcategory of \(\Mod\, k\) consisting of perfect complexes, by which we mean bounded complexes of finitely generated projective k-modules.
When k is Noetherian with finite global homological dimension (e.g., a principal ideal domain), every perfect complex can be represented by a bounded complex of finitely generated k-modules.
We also have the following fact, see, e.g., \cite[Exercise 1.18]{kas-sha02}:

\begin{prop}
	\label{proposition HMod}
	If $k$ is a principle ideal domain, any object $K\in \Mod\,k$ is isomorphic to $\bigoplus_d H^d(K)[-d]$ in $H^0(\Mod\, k)$.
\end{prop}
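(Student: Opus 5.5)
The plan is to show that over a principal ideal domain $k$ every complex splits, up to homotopy, as the direct sum of its shifted cohomology groups. Since $\Mod\,k$ is modeled by cofibrant complexes, we may take $K$ to be a complex of projective, hence free, $k$-modules. For each $d$, set $Z^d=\ker(d^d)$ and $B^d=\mathrm{im}(d^{d-1})$. Because $k$ is a PID, submodules of free modules are free, so $Z^d$ and $B^{d+1}$ are free, and in particular projective. This is the key input, and it is where the PID hypothesis enters.

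First, the short exact sequence $0\to Z^d\to K^d\to B^{d+1}\to 0$ splits because $B^{d+1}$ is projective. Second, the sequence $0\to B^d\to Z^d\to H^d(K)\to 0$ is a free resolution of length one of $H^d(K)$. Using the first splitting, $K$ decomposes as a direct sum over $d$ of two-term complexes
\[
C_d=\bigl(B^{d}\hookrightarrow Z^{d}\bigr),
\]
placed in degrees $d-1$ and $d$. Concretely, $K^d\cong Z^d\oplus B^{d+1}$, and the differential maps the $B^{d+1}$ summand isomorphically onto $B^{d+1}\subset Z^{d+1}$. One checks that this identification is an isomorphism of complexes. Each $C_d$ has cohomology $H^d(K)$ concentrated in degree $d$, so the natural map $C_d\to H^d(K)[-d]$ is a quasi-isomorphism. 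Therefore
\[
K\cong\bigoplus_d C_d\simeq\bigoplus_d H^d(K)[-d],
\]
and since $H^0(\Mod\,k)$ is the localization at quasi-isomorphisms, this is an isomorphism in $H^0(\Mod\,k)$.

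The main subtlety is the infinite direct sum. We need $\bigoplus_d C_d$ to be identified with $K$ itself rather than with a product, and we need $\bigoplus_d H^d(K)[-d]$ to be the coproduct in $\Mod\,k$. Both hold: the direct sum decomposition is degreewise finite, because only $C_d$ and $C_{d+1}$ contribute to degree $d$, and direct sums of quasi-isomorphisms are quasi-isomorphisms since cohomology commutes with direct sums.

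A second point to check is that the splitting of $Z^d\to K^d$ is compatible with the differential. This holds because the complement of $Z^d$ in $K^d$ maps injectively under $d^d$ onto $B^{d+1}$.
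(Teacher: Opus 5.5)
Your proof is correct. Splitting $0\to Z^d\to K^d\to B^{d+1}\to 0$ is possible because $B^{d+1}\subset K^{d+1}$ is free over a PID. This gives an honest isomorphism of complexes $K\cong\bigoplus_d C_d$. Each $C_d=(B^d\hookrightarrow Z^d)$ is a two-term free resolution of $H^d(K)[-d]$, so the sum, which is degreewise finite, is a quasi-isomorphism onto $\bigoplus_d H^d(K)[-d]$, hence an isomorphism in $H^0(\Mod\,k)$.

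The paper gives no argument of its own; it simply cites this as a standard fact from \cite[Exercise 1.18]{kas-sha02}. Your proof is the usual hereditary-ring argument that makes that citation self-contained, and it has two small advantages. First, it visibly covers unbounded complexes, via cofibrant replacement. Second, it shows that only the fact that submodules of projectives are projective is used. In the cofibrant model of $\Mod\,k$, the object $\bigoplus_d C_d$ is itself the cofibrant representative of $\bigoplus_d H^d(K)[-d]$, which is not cofibrant in general.
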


\begin{rmk}
	\label{rmk HMod}
	Thanks to Proposition \ref{proposition HMod}, if $k$ is a principle ideal domain, up to quasi-equivalence, we can assume that the objects of $\Perf\, k\subset \Mod\, k$ are of the form
	\[K_1[d_1]\oplus\ldots\oplus K_m[d_m] \qquad\text{(without differential)}\]
	for some $m\geq 0$, $d_1<\ldots<d_m$, and finitely generated $k$-modules $K_i$.
	We will assume this throughout the paper.
\end{rmk}

We write $\Mod\, \cC$ (resp.\ $\Prop\,\cC$) for the derived dg category of right $\cC$-modules (resp.\ proper right $\cC$-modules), which is given by the internal hom $\RHom(\cC^{\op},\Mod\, k)$ (resp.\ $\RHom(\cC^{\op},\Perf\, k)$). 

A dg category $\cC$ is {\em semifree} if its morphisms, treated as an algebra, are {\em freely} generated by a set of morphisms $\{f_i\}$ (indexed by an ordinal), with the condition that $df_i$ is generated by the set $\{f_j \vb j < i\}$. In this case, $\{f_i\}$ is called a set of generating morphisms of $\cC$. By \cite{canonaco-ornaghi-stellari-2}, when $\cC$ is a semifree dg category, $\Mod \, \cC$ (resp.\ $\Prop \, \cC$) is equivalent to the dg category of dg functors from $\cC^{\op}$ to $\Mod\, k$ (resp.\ $\Perf\, k$) whose morphisms are $A_{\infty}$-natural transformations, or equivalently to the dg category of dg modules whose morphisms are module homomorphisms in the sense of \cite[Section~(1j)]{sei08}. We will use both perspectives throughout the paper.

A dg category $\cD$ is a semifree extension of a dg category $\cC$ if $\cD$ is obtained from $\cC$ by adding a set of objects to the objects of $\cC$ and a set of morphisms $\{f_i\}$ (indexed by an ordinal) freely to the morphisms of $\cC$ (treated as an algebra), with the condition that $df_i$ is generated by the morphisms of $\cC$ and $\{f_j \vb j <i\}$.

In this paper, the Yoneda functor of \(\cC\) is the functor
\[
\cY\colon \cC \to \Mod\,\cC.
\]
For any object \(E \in \cC\), the object \(\cY(E)\) is the dg functor \(\cC^{op} \to \Mod k\) given by $\cY(E)(-) = \hom_{\cC}(-,E)$.

\subsection{Plumbing space} 
\label{subsection plumbing space}
One of the main purposes of the paper is to study the compact Fukaya category of {\em plumbing spaces}.
Thus, it would reasonable to introduce {\em briefly} the definition/construction of plumbing spaces in this preliminary section. 
In the literature, there exist many good references for more details, for example, see \cite{abo-smi12}.
We refer also the reader to \cite[Section 3]{kar-lee24-2}, because we share the same notation. 

We construct a plumbing space from a {\em set of data}, which we call {\em plumbing data}.
\begin{dfn}
	\label{definition plumbing data}
	Let $n$ be a natural number bigger than $1$. 
	{\em Plumbing data} is a triple $(Q, M, \s)$ such that 
	\begin{itemize}
		\item $Q$ is a (finite) quiver, i.e., a directed graph, 
		\item $M$ is a map from the set of vertices of $Q$, denoted by $V(Q)$, to the collection of $n$-dimensional connected, oriented, closed, smooth manifolds, denoted by $\mathcal{O}_n$, i.e., 
		\[M: V(Q) \to \mathcal{O}_n,\]
		(Note that for simplicity of notation, we set the notation $M_v:= M(v)$ for all $v \in V(Q)$.) 
		\item $\s$ is a map from the set of arrows, denoted by $E(Q)$, to $\{1, -1\}$, i.e., 
		\[\s: E(Q) \to \{1, -1\}. \]
	\end{itemize}
\end{dfn}
We note that even if we allow $M_v$ to be a manifold {\em with} boundary, the plumbing space construction given below (and most of results in the current paper) will still work.
However, the resulting plumbing space should be a Weinstein {\em sector}, not a Weinstein manifold. 
In the paper, for the simplicity of arguments, we do not allow $M_v$ to be a manifold with boundary.

When plumbing data $(Q,M,\s)$ is provided, one can construct a Weinstein manifold, called {\em plumbing space}, from the given data. 
Especially, the vertex set $V(Q)$ and the map $M$ defined on $V(Q)$ provide ingredients, and the edge $E(Q)$ and the map $\s$ on $E(Q)$ specify how to ``plumb'' the ingredients. 

More precisely, the ingredients are cotangent bundles $\left\{T^*M_v | v \in V(Q)\right\}$.
If an edge $e \in E(Q)$ has a tail $v \in V(Q)$ and a head $w \in V(Q)$, then one plumbs $T^*M_v$ and $T^*M_w$ at one point. 
In other words, one arbitrarily chooses points $p \in M_v$ and $q \in M_w$ and their small neighborhood $U_p$ of $p$ and $U_q$ of $q$ which are homeomorphic to the $n$-dimensional disk. 
Then, one can identify $T^*U_p$ and $T^*U_q$, where both are homeomorphic to $T^*\mathbb{D}^n \simeq \mathbb{D}^n \times \mathbb{D}^n$.
(Here $\mathbb{D}^n$ denotes the open disk of dimension $n$.)
In order to identify them, we need to set a map 
\begin{align*}
	f: T^*\mathbb{D}^n = \mathbb{D}^n \times \mathbb{D}^n &\to  T^*\mathbb{D}^n =  \mathbb{D}^n \times \mathbb{D}^n\\
	(x_1, \dots, x_n,y_1, \dots, y_n) &\mapsto (-y_1, \dots, -y_n, x_1, \dots, x_n).
\end{align*}
Simply speaking, we identify the base direction of the domain cotangent bundle with the fiber direction of the target. 

Note that the direction of $e$ determines the {\em direction} of the map $f$, in the sense that the domain (resp.\ target) is $T^*U_p$ (resp.\ $T^*U_q$)  $\subset T^*M_v$ and $v$ is the tail of $e$. 
And, we point out that we did not specify how to identify $T^*U_p$ (resp.\ $T^*U_q$) with $T^*\mathbb{D}^n \simeq \mathbb{D}^n \times \mathbb{D}^n$. 
We simply identify $T^*U_p$ and $T^*\mathbb{D}^n$ in the way that the zero sections of are identified with each other as {\em oriented} manifolds. 
For $T^*U_q$, $\s(e)$ determines how to identify. 
If $\s(e) = 1$ (resp.\ $-1$), we identify $T^*U_q$ and $T^*\mathbb{D}$ so that two zero sections are identified with the same (resp.\ opposite) orientation. 
In the end, it determines the intersection sign of $U_p$ and $U_q$ in the identified region, and the intersection sign should coincide with $\s(e)$. 

We note that the above plumbing procedure is {\em local}, since everything happens in a small neighborhood of the chosen plumbing points $p$ and $q$. 
For every edge $e \in E(Q)$, one can perform the above process and the plumbing space is defined to be 
\[\cup_{v \in V(Q)} T^*M_v / \sim,\]
where $\sim$ is the identification according to above plumbing procedure. 

Finally, we point out a well-known fact that if $n \geq 2$, the choice of plumbing points does not effect on (the Weinstein isotopy type of) the resulting plumbing space. 
\begin{dfn}
	\label{definition plumbing space}
	Let $(Q,M,\s)$ be plumbing data. 
	\begin{enumerate}
		\item Let $P(Q,M,\s)$ denote the plumbing space corresponding to the given data set $(Q,M,\s)$. 
		\item Let $\iota_v$ denote the natural map from $T^*M_v$ to $P(Q,M,\s)$ for all $v \in V(Q)$. 
		A {\em Lagrangian core} (resp.\ {\em Lagrangian cocore}), or simply {\em core} (resp.\ {\em cocore}), is $\iota_v(M_v)$ (resp.\ $\iota_v(T^*_xM_v)$, where $x$ is not in any of the identified regions). 
	\end{enumerate}
\end{dfn}

We note that two different plumbing data sets could give Weinstein isotopic plumbing spaces. 
Especially, according to the following proposition, one could come up with multiple plumbing data sets such that the corresponding plumbing spaces are Weinstein isotopic. 

\begin{prop}[Propositions 3.15 and 3.16 of \cite{kar-lee24-2}]
	\label{proposition direction reversing}
	Let $(Q,M,\s)$ be plumbing data and $P(Q,M,\s)$ be the corresponding plumbing space. 
	Let $(Q',M',\s')$ be plumbing data such that 
	\begin{itemize}
		\item $Q$ and $Q'$ are the same quiver except the direction of one arrow $e \in E(Q)$ and $e' \in E(Q')$, i.e., 
		\[V(Q)=V(Q'), E(Q) \setminus \{e\} = E(Q') \setminus \{e'\},\]
		such that $e$ and $e'$ are connecting the same vertices in $V(Q) = V(Q')$, but they have opposite directions,
		\item $M, M': V(Q)=V(Q') \to \mathcal{O}_n$ are the same maps,
		\item $\s(f) = \s'(f)$ for all $f \in E(Q) \setminus \{e\} = E(Q') \setminus \{e'\}$ and 
		\[\s(e) = \begin{cases*}
			-\s'(e') &\text{  if $n$ is an odd integer,}\\
			\s'(e') &\text{  if $n$ is an even integer.}
		\end{cases*}\]
	\end{itemize}
	Then, two plumbing spaces $P(Q,M,\s)$ and $P(Q',M',\s')$ are Weinstein isotopic to each other.
\end{prop}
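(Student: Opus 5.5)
The plan is to compare the two gluing maps directly in the local model. Reversing an arrow should then amount to changing the identification of one neighbourhood $T^*U_q$ with $T^*\mathbb{D}^n$ by a cotangent lift. The whole question then reduces to tracking how that cotangent lift acts on the orientation of the zero section.

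\textbf{Step 1 (local comparison of the gluings).} All plumbings other than the one along $e$ are identical in $P(Q,M,\s)$ and $P(Q',M',\s')$, and plumbing is local. So it suffices to compare the gluing at the single plumbing region for $e$, whose tail is $v$ and head is $w$. For $e$ we glue $T^*U_p \subset T^*M_v$ to $T^*U_q\subset T^*M_w$ by
\[
f(x,y)=(-y,x).
\]
For $e'$ the roles are swapped, and we glue $T^*U_q$ to $T^*U_p$ by $f$. Viewed as an identification from $T^*U_p$ to $T^*U_q$, this is
\[
f^{-1}(x,y)=(y,-x).
\]
A direct check gives $f^{-1}=a\circ f$, where $a(x,y)=(-x,-y)$.

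\textbf{Step 2 (reinterpret $a$ as a change of chart).} The map $a$ is the cotangent lift of the antipodal map $x\mapsto -x$ of $\mathbb{D}^n$. It is therefore an exact symplectomorphism that preserves the canonical Liouville form $y\,dx$, and it preserves the zero section. So gluing with $f^{-1}$ is the same as gluing with $f$ after recomposing the chosen chart $T^*U_q\cong T^*\mathbb{D}^n$ with $a$ (and similarly on the $p$-side, since the roles of the two charts are swapped). On zero sections the antipodal map has degree $(-1)^n$. Hence the sign with which the two zero sections are identified, which is the intersection sign recorded by $\s$, changes by the factor $(-1)^n$.

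This gives exactly the stated rule: $\s(e)=-\s'(e')$ when $n$ is odd and $\s(e)=\s'(e')$ when $n$ is even. With this rule, the two plumbing data produce the same glued space up to a change of local charts by the Liouville-preserving map $a$.

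\textbf{Step 3 (Weinstein structures).} It remains to check that the resulting Weinstein structures agree up to Weinstein homotopy. Since $a$ preserves $y\,dx$, the Liouville forms on the glued pieces match exactly after the chart change. The only remaining issue is the interpolation of the Morse function / Liouville data used in the standard plumbing construction near the gluing region. I would handle this with the usual argument: any two such interpolations, made with respect to charts differing by a linear symplectic map that preserves the local model, are connected through a family of Weinstein structures. This gives a Weinstein isotopy.

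I expect this last step to be the only slightly delicate point. The first thing I would try is to choose the interpolating function to be invariant under $a$, for instance a function of $|x|^2$ and $|y|^2$ only. Then the construction literally coincides on both sides, and the isotopy is the identity away from the choices of plumbing points. Independence of those choices is the well-known fact for $n\ge 2$ recalled just before the statement.
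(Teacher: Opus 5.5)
Your proposal is correct. The paper itself gives no proof of this statement; it only quotes it from \cite{kar-lee24-2}, so there is no argument in the paper to compare against. Your direct chart computation is a natural self-contained proof: $f^{-1}=a\circ f$, where $a$ is the cotangent lift of $x\mapsto -x$, which has degree $(-1)^n$ on the zero section.

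One step should be written out rather than left to the parenthetical ``similarly on the $p$-side''. In the convention for $e'$, the chart on $U_q$ (the tail) is orientation-preserving, and the sign $\s'(e')$ sits on the chart of $U_p$. After absorbing $a$ into the $U_q$-chart, you have an $e$-type gluing whose charts have degree $\s'(e')$ on $U_p$ and $(-1)^n$ on $U_q$. The $e$-convention, however, demands degree $+1$ on $U_p$.

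This is fixed by an observation you should state. For a reflection $R$ with cotangent lift $\tilde R(x,y)=(Rx,Ry)$, one has $\tilde R^{-1}\circ f\circ \tilde R=f$. So composing both charts with $\tilde R$ leaves the gluing map literally unchanged. Hence only the product of the two chart degrees matters, which gives $\s(e)=(-1)^n\s'(e')$. Equivalently, $\s$ records, up to a universal sign, the local intersection sign in the order (tail, head), and $M_v\cdot M_w=(-1)^{n^2}M_w\cdot M_v$.

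For Step 3, your suggestion to choose $a$-invariant data works, and the sector picture makes this immediate. Under $(x,y)\leftrightarrow x+iy$, $f$ is multiplication by $i$ on $\mathbb{C}^n$ and $a$ is multiplication by $-1$. Multiplication by $-1$ preserves the standard Liouville form, each component of the stop $\partial_\infty(\mathbb{R}^n\sqcup i\mathbb{R}^n)$, and $y\,dx$.
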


We end this subsection by introducing a specific Weinstein sectorial covering of a plumbing space $P(Q,M,\s)$, which is associated to the plumbing data $(Q,M,\s)$. 
The covering is important in the current paper, because of the following two reasons: 
One reason is that a plumbing space could be seen as a gluing of each Weinstein sector in the specific covering. 
In other words, the covering provides another construction of a plumbing space, which is the purpose of this subsection. 
The other reason is that \cite{kar-lee24-2} computed wrapped Fukaya categories of plumbing spaces from the Weinstein sectorial covering -- \cite{gps2} proves that if a wrapped Fukaya category of a Weinstein manifold can be computed as a homotopy colimit of those of Weinstein sectors. 
Applying \cite{gps2}, \cite{kar-lee24-2} computed the wrapped Fukaya categories of each sector and their homotopy colimit, i.e., wrapped Fukaya categories of plumbings. 

Recall that a plumbing space $P= P(Q,M,\s)$ is defined to be 
\[\cup_{v \in V(Q)} T^*M_v / \sim,\]
where $\sim$ means the identification described above. 
Then $P$ can be decomposed into subsets of the following two types: the complements of the identified regions in the cotangent bundles $T^*M_v$, and the identified regions themselves.

To be more precise, we point out that the number of edges is the same as the number of plumbing points, and thus, it is also the same as the number of (connected components in) identified regions. 
Moreover, every connected component of the identified regions is the same Weinstein sector.
The Weinstein sector is $T^*\mathbb{D}^n$ with a specific choice of Weinstein structure.
We skip the details, but after convex completion, the Weinstein sector is 
\[\left(\mathbb{C}^n, \partial_\infty \left(\mathbb{R}^n \sqcup i \mathbb{R}^n\right)\right).\]
Let $\Pi_n$ denote the Weinstein sector and let us name it {\em plumbing sector}.
For more details, see \cite[Section 3]{kar-lee24-2}.

Also, let $\widetilde{M}_v$ mean the complement of all identified regions from $M_v$. 
Then, 
\begin{gather}
	\label{eqn Weinstein sectorial covering}
	P = \cup_{v \in V(Q)} T^*\widetilde{M}_v \sqcup \cup_{e \in E(Q)} \Pi_n.
\end{gather}

\subsection{Wrapped Fukaya category of plumbing spaces}
\label{subsection wrapped Fukaya category of plumbing spaces}
Let $\mathcal{W}(W)$ denote (the pretriangulated closure of) the wrapped Fukaya category of a Liouville/Weinstein manifold/sector $W$, which is an $A_{\infty}$-category. 
Given a closed subset (i.e., a stop) $\Lambda \subset \partial_{\infty} W$, where $\partial_{\infty} W$ is the ideal contact boundary of $W$, we define $\cW(W,\Lambda)$ as the partial wrapped Fukaya category associated with the pair $(W, \Lambda)$. When $\Lambda=\emptyset$, we have $\cW(W,\Lambda)=\cW(W)$. 
For precise definitions, we refer to \cite{gps1}. 
The notion of the partial wrapped Fukaya category was originally introduced in \cite{syl19}.

As mentioned above, wrapped Fukaya categories of plumbings are computed in \cite{kar-lee24-2}. 
We introduce the main result in this subsection, see Theorem \ref{thm wrapped Fukaya category}. 

Before starting the introduction, let us explain the role of Theorem \ref{thm wrapped Fukaya category} in the current paper. 
First of all, in the current paper, we study {\em the category of proper modules} over the wrapped Fukaya category of plumbings (in the purpose of answering questions in symplectic topology and representation theory, as presented in Section \ref{section introduction}).
Theorem \ref{thm generation of proper modules} provides a specific representation of wrapped Fukaya categories, which is a helpful tool of our study. 
Moreover, it explains why the current article is involved in not only symplectic topology, but also representation theory, even though we are considering symplectic topological objects, i.e., {\em wrapped Fukaya categories}.
We state more details of the second point, i.e., the paper's relations to representation theory, in Section \ref{subsection Ginzburg dg algebra and wrapped Fukaya category}.

We note that rigorously, wrapped Fukaya categories are invariants of Weinstein manifolds equipped with the extra structures.
In Section \ref{subsubsection grading structure}, we discuss the extra structures of plumbing spaces. 
Then we define the wrapped Fukaya category of plumbings and we compute it in Sections \ref{subsubsection plumbing sector} and \ref{subsubsection wrapped Fukaya category}.
As mentioned before, in order to compute the categories, we need to compute those of each Weinstein sectors.
For the plumbing sector, we introduce the computation in Section \ref{subsubsection plumbing sector}.
The rest of computation will be reviewed in Section \ref{subsubsection wrapped Fukaya category}.

\subsubsection{Grading structure}
\label{subsubsection grading structure}
Note that to define/compute $\mathbb{Z}$-graded wrapped Fukaya categories of Weinstein manifolds $W$, $W$ should have $2 c_1(W) = 0 \in H^2(W; \mathbb{Z})$. 
Also, the definition of wrapped Fukaya category requires to choose $\zeta \in H^1(W; \mathbb{Z})$, called {\em grading structure}, and $b \in H^2(W;\mathbb{Z}/2)$, called {\em background class}. 
For more details, see \cite{sei08}.
Thus, before recalling wrapped Fukaya category computations in \cite{kar-lee24-2}, we review what grading structures and background classes are considered in \cite{kar-lee24-2}.
The reader can find comprehensive arguments about these information in \cite[Sections 6.1 and 6.2]{kar-lee24-2}.
\begin{rmk}
	\label{rmk grading for compact Fukaya}
	We note that the same information, i.e., background class and grading structure, are also necessarily to define/compute compact Fukaya categories. 
\end{rmk}

If our Weinstein manifold is a plumbing space $P$ of dimension $\geq 6$ corresponding to plumbing data $(Q,M,\s)$, one can easily see that 
\[H^2(P;\mathbb{Z}/2) = \bigoplus_{v \in V(Q)} H^2(M_v;\mathbb{Z}/2), \qquad H^1(P;\mathbb{Z}) = H^1(Q;\mathbb{Z}) \oplus \bigoplus_{v \in V(Q)} H^1(M_v;\mathbb{Z}). \]
For the background class $b \in H^2(P;\mathbb{Z}/2)$, \cite{kar-lee24-2} chooses the standard one, i.e., $b$ such that the summand of $b$ in each $H^2(M_v;\mathbb{Z}/2)$ is the standard background class of the cotangent bundle $T^*M_v$ (as explained in \cite{nad-zas09}).

Similarly, to choose an element
$
\zeta \in H^1(P;\mathbb{Z})
=
H^1(Q;\mathbb{Z})
\oplus
\bigoplus_{v \in V(Q)} H^1(M_v;\mathbb{Z}),
$
\cite{kar-lee24-2} takes the standard grading structure on the
$\bigoplus_{v \in V(Q)} H^1(M_v;\mathbb{Z})$ factor, which can be seen as the zero element.
The essential grading data therefore comes from the
$H^1(Q;\mathbb{Z})$ factor.

To encode the effect of $H^1(Q;\mathbb{Z})$ factor, \cite{kar-lee24-2} assigns an integer $d_e$ to each arrow $e$ of $Q$. 
We now define a fixed $\zeta \in H^1(Q;\mathbb{Z})$:
For a loop $\alpha$ in $Q$, one can find a shortest loop $\alpha_0$ homotopic to $\alpha$. 
It is easy to observe that $\alpha_0$ is a concatenation of arrows or reversed arrows of $Q$. 
For convenience, let $\alpha_0 = f_1 \dots f_k$ where $f_i$ is either an arrow or a reversed arrow of $Q$. 
Then, $\zeta \in H^1(Q;\mathbb{Z})$ is defined as 
\begin{gather}
	\label{eqn eta}
	\zeta(\alpha) = \zeta(\alpha_0) := \sum_{f_i = \text{  an arrow  }e_i} d_{e_i} - \sum_{f_i = \text{  a reversed arrow  }  e_i} d_{e_i}.
\end{gather}
It is easy to check that 
\begin{itemize}
	\item if we fix the integers $\{d_e | e \in E(Q)\}$, then $\zeta$ in \eqref{eqn eta} is well-defined, and
	\item for any $\zeta_0 \in H^1(Q;\mathbb{Z})$, one can find a collection $\{d_e | e \in E(Q)\}$ such that $\zeta$ in \eqref{eqn eta} agrees $\zeta_0$. 
\end{itemize}

\begin{rmk}
	We would like to note that for a given $\zeta \in H^1(P;\mathbb{Z})$, the choice of $d_e$ is not unique. 
	For example, one can find different choices of $d_e$ giving the same grading structure and so the same wrapped Fukaya category too. 
	See Corollary \ref{corollary grading changes}.
\end{rmk}

\subsubsection{Wrapped Fukaya category of $\Pi_n$}
\label{subsubsection plumbing sector}
We review the wrapped Fukaya category of the plumbing sector $\Pi_n$. 
We recall that the wrapped Fukaya category of $\Pi_n$ is defined to be the partially wrapped Fukaya category $\cW\left(\mathbb{C}^n, \partial_\infty \left(\mathbb{R}^n \sqcup i \mathbb{R}^n\right)\right)$. 

As known in \cite{gps2, cdrgg17}, two linking disks corresponding to the two stops in $\left(\mathbb{C}^n, \partial_\infty \left(\mathbb{R}^n \sqcup i \mathbb{R}^n\right)\right)$ generate $\cW\left(\mathbb{C}^n, \partial_\infty \left(\mathbb{R}^n \sqcup i \mathbb{R}^n\right)\right)$, because $\mathbb{C}^n$ is a subcritical Weinstein manifold. 
Thus, $\cW\left(\mathbb{C}^n, \partial_\infty \left(\mathbb{R}^n \sqcup i \mathbb{R}^n\right)\right)$ can be described by a quiver representation having two vertices and arrows between them such that two vertices correspond to two linking disks and arrows correspond to morphisms between vertices. 
The quiver is determined in \cite{kar-lee24-2}, and the following is the result. 

\begin{thm}[\cite{kar-lee24-2}]\label{thm:wfuk-plumbing-sector}
	Let $n$ be an integer greater than or equal to $3$.
	\begin{enumerate}
		\item\label{item:plumbing-3} 
		The wrapped Fukaya category of the plumbing sector $\Pi_e$ of dimension $2n$ is given, up to quasi-equivalence, by
		\[\mathcal{W}(\Pi_n)\simeq \mathrm{Tw}\, \mathcal{D}_n \,\]
		where $\mathcal{D}_n$ is the semifree dg category given as follows:
		\begin{enumerate}[label = (\roman*)]
			\item {\em Objects:} $L_s,L_t$ (each representing linking disks corresponding to the stops $\partial_\infty \mathbb{R}^n$ and $\partial_\infty i \mathbb{R}^n$, respectively).
			\item {\em Generating morphisms:}
			\[\begin{tikzcd}
				L_s\ar[r,"x", bend left]& L_t\ar[l,"y",bend left]
			\end{tikzcd}\]
			\item {\em Degrees:} $|x|=0,\quad |y|=2-n$.
			\item {\em Differentials:} $dx=dy=0$.
		\end{enumerate}	
		\item \label{item:inclusion-plumbing-3} 
		We note that there exist natural identifications $S^{n-1} \simeq \partial_\infty \mathbb{R}^n, S^{n-1} \simeq \partial_\infty i \mathbb{R}^n$.
		When equipped with appropriate grading data, they induce the dg functors
		\begin{gather*}
			\Phi_e\colon \mathcal{W}(T^*S^{n-1}) \to\mathcal{W}(\Pi_n)\\
			L \mapsto L_s, \qquad
			z \mapsto yx\\
			\Psi_e\colon \mathcal{W}(T^*S^{n-1}) \to\mathcal{W}(\Pi_n)\\
			L \mapsto L_t, \qquad
			z \mapsto xy,
		\end{gather*}
		where $\mathcal{W}(T^*S^{n-1})$ is quasi-isomorphic to the pretriangulated closure of the dg category with a single object $L$ (representing a cotangent fiber), whose endomorphism algebra is $k[z]$ with $|z|=2-n$ and $dz=0$.
	\end{enumerate}
\end{thm}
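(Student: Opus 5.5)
The plan is to compute $\mathcal{W}(\Pi_n)$ directly from the sectorial description $\Pi_n=(\mathbb{C}^n,\partial_\infty(\mathbb{R}^n\sqcup i\mathbb{R}^n))$, using generation by linking disks and an explicit analysis of Reeb chords in $S^{2n-1}$ with the standard (Hopf) contact form. The functors in (2) will then be read off from the stop-neighbourhood inclusions.

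\textbf{Step 1: generation and chord bookkeeping.} Since $\mathbb{C}^n$ is subcritical, $\mathcal{W}(\mathbb{C}^n)=0$. By the stop removal and generation results of \cite{gps2} (see also \cite{cdrgg17}), $\mathcal{W}(\Pi_n)$ is generated by the two linking disks: $L_s$ around $\Lambda_s=\partial_\infty\mathbb{R}^n$ and $L_t$ around $\Lambda_t=\partial_\infty i\mathbb{R}^n$. These two Legendrian $(n-1)$-spheres form a Hopf-type link in $S^{2n-1}$. It therefore suffices to compute the endomorphism dg algebra of $L_s\oplus L_t$. I would do this by positively wrapping the boundary of each linking disk, which is a small Legendrian sphere linking $\Lambda_s$ or $\Lambda_t$, through the complement of the stops. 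The generators are Reeb chords of the Hopf flow $z\mapsto e^{i\theta}z$ that do not cross the stops. Starting near $\Lambda_s$, a quarter rotation carries $\mathbb{R}^n$ onto $i\mathbb{R}^n$. Hence the first chord family from $\partial L_s$ reaches $\partial L_t$ and is then blocked by $\Lambda_t$, and symmetrically from $L_t$ to $L_s$. After perturbing the Morse--Bott $S^{n-1}$-families, I expect the following picture. There is exactly one minimal chord in each direction, giving $x\in\hom(L_s,L_t)$ and $y\in\hom(L_t,L_s)$. All longer chords are concatenations that alternate through the blocking stops, so they correspond to the words $(xy)^k$, $(yx)^k$, $x(yx)^k$ and $y(xy)^k$. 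This shows that $\hom$ is the path algebra freely generated by $x$ and $y$.

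\textbf{Step 2: degrees and formality.} I would compute the degrees from the Maslov indices of the Morse--Bott families. The first chord should have degree $0$ for a suitable choice of grading on $L_t$ (the grading of $L_t$ is a free shift). The full loop $s\to t\to s$ must then have degree $2-n$, which is the degree of the wrapping generator of $\mathcal{W}(T^*S^{n-1})$, namely the Conley--Zehnder index of a half Hopf rotation. This forces $|y|=2-n$. Since the morphism spaces are freely generated, the degrees are concentrated in multiples of $(2-n)$ shifted by $0$ or $|x|$, and $n\ge3$, I would argue that the differential vanishes by degree reasons. The same degree argument should show that there is no room for higher $A_\infty$-operations compatible with the chord-length filtration, which gives formality and hence $\mathcal{W}(\Pi_n)\simeq\mathrm{Tw}\,\mathcal{D}_n$.

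\textbf{Step 3: the functors in (2).} A neighbourhood of each stop is $T^*S^{n-1}\times\mathbb{C}_{\mathrm{Re}\geq0}$. The corresponding sectorial inclusion sends the cotangent fibre $L$ to the linking disk $L_s$ (respectively $L_t$). The generator $z$ of $k[z]$ is the once-around wrap of the fibre in $T^*S^{n-1}$. In the image, this wrap is the chord going once around the link, which I identified in Step 1 with $yx$ (respectively $xy$). The grading data must be chosen so that $|z|=2-n$ matches $|yx|$.

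The main obstacle is Step 2: justifying rigorously that the perturbed Morse--Bott chord count gives precisely the free path algebra with no differential or higher products. If the degree argument does not suffice, I would instead compare with the surgery or Chekanov--Eliashberg description of $(\mathbb{C}^n,\Lambda)$ for the Hopf link. Alternatively, I would use the known answer for the $A_2$ Milnor fibre obtained by attaching handles along $\Lambda_s$ and $\Lambda_t$.
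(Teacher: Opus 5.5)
The paper does not prove this statement; it imports it from \cite{kar-lee24-2}. Your proposal therefore has to carry the computation itself, and its core, Step~1, is a heuristic rather than a computation. Morphisms between linking disks in $\mathcal{W}(\mathbb{C}^n,\Lambda)$, with $\Lambda=\Lambda_s\sqcup\Lambda_t$, are computed by a cofinal positive wrapping \emph{in the complement of} $\Lambda$. The Hopf flow is not such a wrapping. The sphere $\partial L_s$ links $\Lambda_s$ once and does not link $\Lambda_t=e^{i\pi/2}\Lambda_s$, so it has intersection number $\pm1$ with the trace $\bigcup_{0\le\phi\le\pi/2}e^{i\phi}\Lambda_s$. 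This forces $e^{i\theta}\partial L_s$ to meet $\Lambda_t$ for some $\theta\in(0,\pi/2)$, and the same argument shows it meets $\Lambda_s$ as well. So ``$\partial L_s$ travels a quarter turn to $\partial L_t$ and is then blocked'' is not an admissible wrapping.

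Part (1) consists of three claims: there is exactly one minimal chord in each direction, the longer generators are exactly the alternating words, and $\mu^2$ is concatenation. Your proposal asserts these rather than deriving them. A model that honestly counts chords (for instance Reeb chords of the stop $\Lambda$ itself, as in a Chekanov--Eliashberg-type description) sees Morse--Bott $S^{n-1}$-families at every length $k\pi/2$. After perturbation each family gives two generators, in degrees differing by $n-1$. So there are many more generators than words in $x,y$, and there are differentials to compute. Step~3 inherits the same gap: degree only gives $\Phi_e(z)\in k\cdot yx$, and the real content is that the coefficient is nonzero. Your fallback via the $A_2$ Milnor fibre does not close the gap either. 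It only tells you the homotopy pushout of $\mathcal{W}(\Pi_n)$ with two copies of $\mathcal{W}(T^*D^n)$ over $\mathcal{W}(T^*S^{n-1})$, and that does not by itself determine $\mathcal{W}(\Pi_n)$.

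The formality claim in Step~2 is also not justified by degrees. A composable string in $x,y$ containing $m$ copies of $y$ has total degree $m(2-n)$. Hence $\mu^k$ of that string lands in degree $(m+j)(2-n)$, which is the degree of an existing word between the same objects, whenever $k-2=j(n-2)$. For instance, $\mu^3(x,y,x)$ has the degree of $xyxyx$ when $n=3$, and for every $n$ the operation $\mu^n$ is never excluded. What actually gives formality is freeness of the cohomology. Suppose you know that $H^*\operatorname{End}(L_s\oplus L_t)$ is the path algebra freely generated by $[x],[y]$, with $|x|=0$ and $|y|=2-n$. Then sending $x,y$ to cocycle representatives defines a functor from the semifree category $\mathcal{D}_n$ into a dg model of the full subcategory on $L_s,L_t$, and this functor is an isomorphism on cohomology. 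So everything reduces to computing that cohomology algebra with its product, and this is exactly what your proposal does not supply.
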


\begin{rmk}\label{rmk:plumbing-sector-disks}
	We remark that $\mathbb{R}^n$ and $i \mathbb{R}^n$ are not objects of $\cW(\Pi_n) \simeq \cW\left(\mathbb{C}^n, \partial_\infty \left(\mathbb{R}^n \sqcup i \mathbb{R}^n\right)\right)$ since they are Lagrangians touching the stops. 
	However, their small negative pushoffs by Reeb flows (in the cylindrical end of $\mathbb{C}^n$) are objects in the wrapped Fukaya category. 
	Let $D^n_s$ and $D^n_t$ denote the Lagrangians corresponding to the negative pushoffs of $\mathbb{R}^n$ and $i \mathbb{R}^n$. 
	The Yoneda modules of $D^n_s, D^n_t$, and $D^n_s\cup D^n_t$, where $D^n_s$ and $D^n_t$ are equipped with gradings $d_s, d_t \in \mathbb{Z}$ respectively, are described as
	\begin{gather*}
		\mathcal{Y}\colon\mathcal{W}(\Pi_e)\hookrightarrow\Mod\,\cD_n \quad(\simeq\Mod\,\mathcal{W}(\Pi_e))\\
		D^n_s\mapsto E_s,\quad D^n_t\mapsto E_t,\quad D^n_s\cup D^n_t\mapsto E_{st},
	\end{gather*}
	where
	\begin{alignat*}{2}
		E_s(L_s)&=k[d_s], \quad E_s(L_t)&&=0,\\
		E_t(L_s)&=0, \hspace{1cm} E_t(L_t)&&=k[d_t],\\
		E_{st}(L_s)&=k[d_s], \quad E_{st}(L_t)&&=k[d_t],
	\end{alignat*}
	and they all send $x$ and $y$ to $0$.
\end{rmk}

\subsubsection{Wrapped Fukaya category of plumbings}
\label{subsubsection wrapped Fukaya category}
The specific Weinstein sectorial coverings of plumbings have two types of Weinstein sectors in them.
One is the plumbing sector whose wrapped Fukaya category is introduced in the previous subsubsection. 
The other type is cotangent bundles whose wrapped Fukaya categories have been well-understood, as explained below.

When $M$ is a smooth manifold, \cite{abo12, gps2} proved that we have a quasi-equivalence
\[\mathcal{W}(T^*M)\simeq \mathrm{Tw}\, C_{-*}(\Omega M) ,\]
where the latter can be seen as a semifree dg category with a single object $L$ (representing a cotangent fibre) whose endomorphism algebra is the chains $C_{-*}(\Omega M)$ on the based loop space $\Omega M$. 

For $n\geq 2$, there is an inclusion
\[S^{n-1}\simeq \partial U\hookrightarrow M \setminus \{\mathrm{pt}\}, \]
where $U$ is a small disk neighborhood neighborhood of an interior point $\mathrm{pt}$ in $M$. 
This induces a dg-functor
\begin{equation}\label{eq:eta'}
	\eta\colon k[z]\simeq C_{-*}(\Omega S^{n-1}) \to C_{-*}(\Omega(M \setminus \{\mathrm{pt}\})),
\end{equation}
where $dz=0$ and $|z|=2-n$. 
Moreover, by the homotopy colimit formula in \cite{kar-lee21}, $C_{-*}(\Omega M)$ is the semifree extension of $C_{-*}(\Omega(M \setminus \{\mathrm{pt}\}))$ by a degree $(1-n)$ morphism $h\colon L\to L$ such that $dh=\eta(z)$.

Wrapped Fukaya category of the plumbing space $P$ corresponding to the plumbing data $(Q,M,\s)$ and a grading structure $\zeta$ is described in \cite{kar-lee24-2}. 
\begin{thm}[Theorem 6.5 of \cite{kar-lee24-2}]
	\label{thm wrapped Fukaya category}
	The wrapped Fukaya category $\mathcal{W}(P;d_e)$ is pretriangulated equivalent to $\mathcal{G}$, i.e., 
	\[\mathcal{W}(P;d_e) \simeq \mathrm{Tw}\,\mathcal{G},\]
	where $\mathcal{G}$ is defined as the following data:
	\begin{itemize}
		\item[(i)] {\em Objects:} $L_v$ (representing a Lagrangian cocore dual to $M_v$) for any $v \in V(Q)$.	
		\item[(ii)] {\em Generating morphisms:} There are three types of generating morphisms: 
		\begin{itemize}
			\item For any $v \in V(Q)$, $h_v\colon L_v\to L_v$. 
			\item For any $v\in V(Q)$, the generating morphisms of $ C_{-*}(\Omega_p (M_v\setminus\text{pt}))$, where \\$C_{-*}(\Omega_p (M_v\setminus\text{pt}))$ is considered as a semifree dg algebra.
			\item For any $e=v\to w\in E(Q)$, $x_e\colon L_v \to L_w, y_e\colon L_w \to L_v$.
		\end{itemize}
		\item[(iii)] {\em Degrees:} $|h_v|=1-n,\quad |x_e|=d_e,\quad |y_e|=2-n-d_e$.
		\item[(iv)] {\em Differentials:} $d x_e = d y_e = 0$, the differentials of generating morphisms of $C_{-*}(\Omega(M_v \setminus \{\mathrm{pt}\}))$ are the same as those in $C_{-*}(\Omega(M_v \setminus \{\mathrm{pt}\}))$, and
		\[
		dh_v=\eta_v(z)+\sum_{e= v\to \bullet} (-1)^{n d_e}y_ex_e + \sum_{e= \bullet \to v}(-1)^{n(n-1)/2}\s(e)\, x_ey_e
		\]
	\end{itemize}
\end{thm}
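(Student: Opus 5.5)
The statement is Theorem 6.5 of \cite{kar-lee24-2}, so the plan is to reconstruct its proof from the homotopy colimit (gluing) formula of \cite{gps2} applied to the sectorial covering \eqref{eqn Weinstein sectorial covering}. There are four steps.

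\textbf{Step 1 (the local categories).} By \eqref{eqn Weinstein sectorial covering}, $P$ is covered by the sectors $T^*\widetilde M_v$ for $v\in V(Q)$ and one plumbing sector $\Pi_n$ for each $e\in E(Q)$. These are glued along collar neighbourhoods of their boundaries, which are of the form $T^*S^{n-1}\times\C$. By the stop removal and Künneth results of \cite{gps2}, the wrapped Fukaya category of $P$ is the homotopy colimit of the resulting diagram. The vertices of the diagram are $\cW(T^*\widetilde M_v)$ and $\cW(\Pi_n)$, and the edges are $\cW(T^*S^{n-1})$.

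For the vertex pieces:
\begin{itemize}
\item $\widetilde M_v$ is $M_v$ with one disk removed for each edge incident to $v$.
\item By \cite{abo12,gps2}, $\cW(T^*\widetilde M_v)\simeq \Tw\, C_{-*}(\Omega\widetilde M_v)$.
\item Each removed disk contributes a boundary sphere. The inclusion of that sphere gives a functor $\eta$ as in \eqref{eq:eta'}, sending $z\mapsto \eta(z)$.
\end{itemize}
For the edge pieces, I use Theorem \ref{thm:wfuk-plumbing-sector}: the two boundary inclusions are $\Phi_e$, sending $z\mapsto yx$, and $\Psi_e$, sending $z\mapsto xy$.

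\textbf{Step 2 (computing the colimit).} The homotopy colimit of semifree dg categories along semifree-extension-type functors can be computed explicitly, as in \cite{kar-lee21}. The recipe is:
\begin{itemize}
\item take the disjoint union of the generators;
\item identify the objects $L_s$ or $L_t$ of each $\cD_n$ with the cocore object $L_v$ of the adjacent vertex;
\item for each gluing boundary sphere, add a degree $(1-n)$ generator whose differential is the difference of the two images of $z$.
\end{itemize}

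\textbf{Step 3 (reducing to one generator per vertex).} Step 2 a priori produces one new generator per boundary sphere. To get the single $h_v$ of the statement, I first glue back, within $\widetilde M_v$, all the removed disks except one. Equivalently, I observe that $M_v$ minus several disks is $M_v\setminus\mathrm{pt}$ with spheres connected along arcs. This lets me use the presentation of $C_{-*}(\Omega_p(M_v\setminus \mathrm{pt}))$ together with a single $h_v$ whose differential is
\[
\eta_v(z)+\sum_e \pm\,(\text{image of } z \text{ under } \Phi_e \text{ or } \Psi_e).
\]
For an edge $e=v\to w$ the vertex $v$ receives $y_ex_e$, and the head $w$ receives $x_ey_e$, matching the formula in Theorem \ref{thm wrapped Fukaya category}. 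Gaussian elimination (cancelling acyclic pairs) then removes the redundant generators.

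\textbf{Step 4 (gradings and signs).} The degrees $|x_e|=d_e$ and $|y_e|=2-n-d_e$ come from shifting the gradings on the two linking disks according to the grading structure $\zeta$ of \eqref{eqn eta}. The constraint is that the total degree of a cycle in $Q$ recovers $\zeta$. The signs have two sources:
\begin{itemize}
\item $(-1)^{nd_e}$ comes from the Koszul sign when commuting the grading shift past $y_ex_e$;
\item $(-1)^{n(n-1)/2}\s(e)$ comes from the orientation convention of the identification $f$, which identifies the base of one cotangent bundle with the fibre of the other and so introduces an orientation change, twisted by $\s(e)$.
\end{itemize}

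\textbf{Main obstacle.} I expect the main obstacle to be Step 4, the exact sign and degree bookkeeping. It requires tracking the grading and orientation (spin) data through the gluing functors $\Phi_e$ and $\Psi_e$, and checking that the signs are consistent with Proposition \ref{proposition direction reversing}. My first approach would be a local computation in $\Pi_n$ comparing the orientation lines of the two linking disks.
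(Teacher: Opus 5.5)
Your proposal follows the same route as the proof in \cite{kar-lee24-2}, which this paper only recalls: sectorial descent of \cite{gps2} over the covering \eqref{eqn Weinstein sectorial covering} (with Künneth for the overlaps; stop removal is not the relevant input), the local computations of Theorem \ref{thm:wfuk-plumbing-sector} and of $C_{-*}(\Omega\widetilde M_v)$, the semifree homotopy colimit formula of \cite{kar-lee21}, and cancellation of the surplus gluing generators to leave one $h_v$ per vertex. In Step 3, make explicit that under $\widetilde M_v\simeq (M_v\setminus\mathrm{pt})\vee\bigvee S^{n-1}$ one boundary sphere represents $\eta_v(z)$ minus (up to sign) the wedge generators carried by the other spheres; eliminating the pairs $(h_{e,v},z_e)$ then leaves exactly $dh_v=\eta_v(z)+\sum\pm(y_ex_e \text{ or } x_ey_e)$.
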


\begin{rmk}
	\label{remark specific representation}
	\mbox{}
	\begin{enumerate}
		\item We note that in the rest of the paper, for simplicity of notation, we let $\cW(P;d_e)$ denote the {\em specific representation} of the wrapped Fukaya category, given in Theorem \ref{thm wrapped Fukaya category}.
		\item For a given set of plumbing data $(Q,M,\s)$, let $P$ be the corresponding plumbing space equipped with a grading information $\{d_e\}$.  
		Let $e \in E(Q)$ start at a vertex $v$ and end at a vertex $w$. 
		Then, we have a plumbing sector $\Pi_n$ as a subset of $P$, which corresponds to the edge $e$. 
		The inclusion induces a functor from $\cW(\Pi_n)$ to $\cW(P)$. 
		Then, the induced functor sends objects $L_s$ to $L_v$, $L_t$ to $L_w[d_e]$, and morphisms $x$ to $x_e$, $y$ to $y_e$.
	\end{enumerate}
\end{rmk}

We set and highlight notations. 
\begin{notation}
	\label{notation morphisms}
	As described in Theorem \ref{thm wrapped Fukaya category}, $x_e, y_e$ denote the specific generating morphisms in $\cW:= \cW(P(Q,M,\s);d_e)$. 
	For a vertex $v \in V(Q)$, we let $\alpha_v^i$ denote the generating morphisms of $ C_{-*}(\Omega_p (M_v\setminus\text{pt}))$.
	Later, we use the overline notation for denoting the morphisms in the opposite category $\cW^{op}$.
	For example, $\overline{x}_e, \overline{y}_e, \overline{h}_v, \overline{\alpha}_v^i$ denote the generating morphisms in $\cW^{op}$ corresponding to $x_e, y_e, h_v, \alpha_v^i$, respectively.
\end{notation}

Proposition \ref{proposition direction reversing} shows that two different sets of plumbing data produce equivalent plumbing spaces. 
Similarly, as a corollary of Theorem \ref{thm wrapped Fukaya category}, one can see that these two equivalent plumbing spaces admit equivalent wrapped Fukaya categories if their grading information are related in the following way.
\begin{cor}[Proposition 6.7 of \cite{kar-lee24-2}]
	\label{corollary direction reversing}
	Let $(Q,M,\s)$ be plumbing data and $P(Q,M,\s)$ be a corresponding plumbing space. 
	Let $(Q',M',\s')$ be plumbing data such that 
	\begin{itemize}
		\item $Q$ and $Q'$ are the same quiver except the direction of one arrow $e \in E(Q)$, i.e., 
		\[V(Q)=V(Q'), \qquad E(Q) \setminus \{e_0\} = E(Q') \setminus \{e_0'\},\]
		$e$ and $e'$ are connecting the same vertices in $V(Q) = V(Q')$, but they have opposite directions,
		\item $M, M': V(Q)=V(Q') \to \mathcal{O}_n$ are the same maps,
		\item $\s(f) = \s'(f)$ for all $f \in E(Q) \setminus \{e_0\} = E(Q') \setminus \{e'_0\}$ and 
		\[\s(e_0) = \begin{cases}
			-\s'(e_0') &\text{  if $n$ is an odd integer,}\\
			\s'(e_0') &\text{  if $n$ is an even integer.}
		\end{cases}\]
	\end{itemize}
	Let $\{d_e| e \in E(Q)\}$ and $\{d'_f | f \in E(Q')\}$ be gradings assigned to the $(2n)$-dimensional plumbing spaces $P(Q,M,\s)$ and $P(Q',M',\s')$, respectively.
	If 
	\[d_e = d'_e \text{  for all  } e \in E(Q) \setminus \{e_0\} = E(Q') \setminus \{e_0'\}, \quad\text{  and  }\quad d_{e_0} = 2-n- d'_{e_0},\]
	then $\mathcal{W}(P(Q,M,\s);d_e)$ and $\mathcal{W}(P(Q',M',\s');d'_e)$ are pretriangulated equivalent to each other. 
\end{cor}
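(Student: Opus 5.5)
The plan is to work entirely with the explicit semifree model $\mathcal{G}$ of Theorem \ref{thm wrapped Fukaya category}. Write $\mathcal{G}$ for the model of $\mathcal{W}(P(Q,M,\s);d_e)$ and $\mathcal{G}'$ for the model of $\mathcal{W}(P(Q',M',\s');d'_e)$. I will construct an isomorphism of semifree dg categories $F\colon \mathcal{G}\to\mathcal{G}'$. Passing to twisted complexes then gives $\Tw\,\mathcal{G}\simeq\Tw\,\mathcal{G}'$, and Theorem \ref{thm wrapped Fukaya category} turns this into the claimed pretriangulated equivalence. Proposition \ref{proposition direction reversing} explains geometrically why such an isomorphism should exist, but the argument itself is purely algebraic.

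Let $e_0\colon v\to w$ in $Q$, so that $e_0'\colon w\to v$ in $Q'$. On objects, $F$ is the identity, $L_u\mapsto L_u$; this makes sense because $V(Q)=V(Q')$ and $M=M'$. On generators, $F$ is the identity on each $h_u$, on the generators $\alpha_u^i$ of $C_{-*}(\Omega_p(M_u\setminus\mathrm{pt}))$, and on $x_e,y_e$ for every $e\neq e_0$. For the reversed arrow, set
\[
x_{e_0}\mapsto a\, y'_{e_0'},\qquad y_{e_0}\mapsto b\, x'_{e_0'},\qquad a,b\in\{\pm1\}.
\]
The directions are correct, since $x_{e_0}\colon L_v\to L_w$ and $y'_{e_0'}\colon L_v\to L_w$. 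The degrees also match: $|x_{e_0}|=d_{e_0}=2-n-d'_{e_0}=|y'_{e_0'}|$, and similarly $|y_{e_0}|=2-n-d_{e_0}=d'_{e_0}=|x'_{e_0'}|$. Because both categories are freely generated, $F$ extends uniquely to a graded functor, and it is bijective on generators, hence an isomorphism, once it is shown to be a chain map.

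The compatibility with $d$ is trivial on every generator except the $h_u$, and for $h_u$ only the $e_0$-terms change. At the vertex $v$, the summand $(-1)^{nd_{e_0}}y_{e_0}x_{e_0}$ of $dh_v$ is sent to $(-1)^{nd_{e_0}}ab\,x'_{e_0'}y'_{e_0'}$. In $\mathcal{G}'$ the arrow $e_0'$ enters $v$, so this must equal $(-1)^{n(n-1)/2}\s'(e_0')\,x'_{e_0'}y'_{e_0'}$. At the vertex $w$, the summand $(-1)^{n(n-1)/2}\s(e_0)\,x_{e_0}y_{e_0}$ is sent to $(-1)^{n(n-1)/2}\s(e_0)ab\,y'_{e_0'}x'_{e_0'}$, which must equal $(-1)^{nd'_{e_0}}y'_{e_0'}x'_{e_0'}$. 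These two conditions read
\[
ab=(-1)^{nd_{e_0}+n(n-1)/2}\s'(e_0')\quad\text{and}\quad ab=(-1)^{nd'_{e_0}+n(n-1)/2}\s(e_0).
\]
They are compatible exactly when $\s(e_0)\s'(e_0')=(-1)^{n(d_{e_0}+d'_{e_0})}=(-1)^{n(2-n)}=(-1)^n$. This is precisely the hypothesis: $\s(e_0)=-\s'(e_0')$ for $n$ odd and $\s(e_0)=\s'(e_0')$ for $n$ even. So I take $b=1$ and $a$ determined by the first equation.

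If $e_0$ is a self-loop ($v=w$), both summands live in the same $dh_v$. The same two identities still make the sum match term by term, so the argument is unchanged.

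The main obstacle is this sign bookkeeping. It requires keeping the exact sign conventions of Theorem \ref{thm wrapped Fukaya category} and noticing that $yx$ at the tail becomes the incoming-type term $x'y'$ at the new head. If a mismatch did appear, for instance from a Koszul sign in how $F$ acts on products, the fallback is to rescale $h_v$ or $h_w$ by $-1$. This changes the sign of all of $dh_v$ at once, and is compatible only if the other summands of $dh_v$ are rescaled consistently; avoiding this is why I would first try to absorb all signs into $a$ and $b$, as above.
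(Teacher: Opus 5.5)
Your argument is correct and matches the route the paper indicates. The paper derives this statement directly from the explicit semifree model of Theorem~\ref{thm wrapped Fukaya category}, in the same spirit as the second proof of Corollary~\ref{corollary grading changes}, and your isomorphism $x_{e_0}\mapsto \pm y'_{e_0'}$, $y_{e_0}\mapsto x'_{e_0'}$ (identity elsewhere) is exactly what identifies the two models. Your sign check is also right: the compatibility condition $\s(e_0)\s'(e_0')=(-1)^{n(d_{e_0}+d'_{e_0})}=(-1)^n$ reproduces precisely the hypothesis on $\s(e_0)$, so the fallback rescaling of $h_v$ or $h_w$ is never needed.
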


Similar to Corollary \ref{corollary direction reversing}, one can prove that the different choices of $d_e$ can give the same wrapped Fukaya category. 
\begin{cor}
	\label{corollary grading changes}
	Let $(Q,M,\s)$ be a set of plumbing data and let $\{d_e\}_{e \in E(Q)}$ define a grading structure. 
	For a fixed vertex $v \in V(Q)$ and an integer $k \in \mathbb{Z}$, let $d'_e$ be given as 
	\[d'_e = \begin{cases}
		d_e +k & \text{  if  } e = v \to \bullet, \\
		d_e - k & \text{  if  } e = \bullet \to v, \\
		d_e & \text{  otherwise}.
	\end{cases}\]
	Then, $\mathcal{W}(P(Q,M,\s);d_e) \simeq \mathcal{W}(P(Q,M,\s);d'_e)$. 
\end{cor}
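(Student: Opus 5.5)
The corollary follows from Theorem \ref{thm wrapped Fukaya category} by an explicit change of generators in the semifree model $\mathcal{G}$. Write $\mathcal{G}$ for the model built from $\{d_e\}$ and $\mathcal{G}'$ for the one built from $\{d'_e\}$. I would construct a dg functor $F\colon \mathcal{G}'\to \Tw\,\mathcal{G}$ that is the identity on all objects except $L_v$. It sends $L_v$ to the shifted object $L_v[k]$, or $L_v[-k]$ depending on the shift convention, chosen so that the degrees below work out. Since the two sides are related only by this reindexing, $F$ should be a quasi-equivalence onto its image, and that image generates $\Tw\,\mathcal{G}$.

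\textbf{Step 1: degrees.} Consider an arrow $e\colon v\to w$ with $w\neq v$. In $\mathcal{G}$ there is a closed morphism $x_e\colon L_v\to L_w$ of degree $d_e$. Viewed as a morphism $L_v[k]\to L_w$ (with the appropriate sign convention), it has degree $d_e+k=d'_e$. Likewise $y_e\colon L_w\to L_v$ becomes a morphism $L_w\to L_v[k]$ of degree $2-n-d_e-k=2-n-d'_e$. Arrows $\bullet\to v$ are handled symmetrically, and their degrees change by $-k$. For a self-loop at $v$, both endpoints shift, so the degrees of $x_e$ and $y_e$ are unchanged. This matches the definition of $d'_e$, which adds $k$ and subtracts $k$ and so leaves self-loop degrees unchanged. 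The endomorphisms of $L_v$, namely $h_v$ and the generators $\alpha_v^i$ of $C_{-*}(\Omega_p(M_v\setminus\mathrm{pt}))$, keep their degrees under a shift.

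\textbf{Step 2: differentials.} Define $F$ on generators by $x'_e\mapsto \pm x_e$, $y'_e\mapsto \pm y_e$, $h'_v\mapsto \pm h_v$, $\alpha'^i_v\mapsto \pm\alpha^i_v$, with the usual Koszul signs for composition with shifted objects. Because $\mathcal{G}'$ is semifree, this defines a dg functor as soon as it commutes with the differentials on generators. The only nontrivial relation is the one for $dh_v$ (and $dh_w$ for neighbouring vertices $w$), which contains the terms $(-1)^{nd_e}y_ex_e$. The prefactor $(-1)^{nd_e}$ becomes $(-1)^{nd'_e}=(-1)^{nd_e+nk}$. I would absorb the resulting discrepancy by choosing the sign of $F(x_e)$ or $F(y_e)$ suitably, and possibly rescaling $h_v$ by $\pm1$ as well.

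The main obstacle I expect is making this sign bookkeeping consistent. Each arrow appears in the $dh$ of both of its endpoints, so the sign choices for the arrows and for the $h$'s must satisfy all the resulting relations at once. My plan is to note that for each arrow only the product of the signs of $x_e$ and $y_e$ matters, so that product can be fixed arrow by arrow. I would set $\epsilon(x_e)=1$ and choose $\epsilon(y_e)$ to fix $dh_v$. I would then check that the same choice is compatible with the term $x_ey_e$ in $dh_w$, using the shift sign rule. If a mismatch remains, I would rescale $h_w$ by $-1$, which is harmless since $h_w$ is a free generator.

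\textbf{Step 3: equivalence.} $F$ is bijective on generators, degree-preserving and compatible with $d$. Hence it is an isomorphism of $\mathcal{G}'$ onto the full subcategory of $\Tw\,\mathcal{G}$ on $\{L_u\}_{u\neq v}\cup\{L_v[k]\}$. That subcategory generates $\Tw\,\mathcal{G}$, because shifts are invertible. It follows that $\Tw\,\mathcal{G}'\simeq\Tw\,\mathcal{G}$, and Theorem \ref{thm wrapped Fukaya category} then gives the claimed equivalence of wrapped Fukaya categories.
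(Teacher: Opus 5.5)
Your proposal is correct and is essentially the paper's second proof: a functor that fixes $L_w$ for $w\neq v$ and shifts $L_v$ (the paper also gives a first argument, that $\{d_e\}$ and $\{d'_e\}$ define the same class in $H^1(Q;\Z)$). The sign check you flag does close: with the Koszul rule the discrepancy for an arrow incident to $v$ is the same at both of its endpoints, so one sign on $F(x'_e)$ suffices and no rescaling of the $h$'s is needed, which matters because your fallback of flipping $h_w$ is not harmless (it also flips $\eta_w(z)$ and every other term of $dh_w$).
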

\begin{proof}
	There are two different proofs. 
	The first one is to use Equation \eqref{eqn eta}.
	More precisely, Equation \eqref{eqn eta} defines two grading structures $\eta$ and $\eta'$ from $\{d_e\}$ and $\{d'_e\}$.
	It is easy to show that two grading structures $\eta$ and $\eta'$ are the same in $H^1(Q;\mathbb{Z})$.
	Thus, $\mathcal{W}(P(Q,M,\s);d_e)$ and $\mathcal{W}(P(Q,M,\s);d'_e)$ are the wrapped Fukaya category of the same plumbing spaces and the same grading structures by \cite[Theorem 6.5]{kar-lee24-2}.
	
	The second proof is to construct an equivalence from $\mathcal{W}(P(Q,M,\s);d_e)$ to $\mathcal{W}(P(Q,M,\s);d'_e)$. 
	Note that both wrapped Fukaya categories have generating sets $\{L_v\}$ and $\{L'_v\}$.
	Then, there exists a naturally defined functor sending $L_w$ to $L'_w$ for all $w \in V(Q)\setminus \{v\}$ and sending $L_v$ to $L'_v[-k]$. 
\end{proof}

\subsection{Ginzburg dg algebra and wrapped Fukaya category}
\label{subsection Ginzburg dg algebra and wrapped Fukaya category}

At the beginning of the current paper, we said that the main subjects of the article is a category that appears in both {\em symplectic topology} and {\em representation theory}. 
The category in symplectic topology is the wrapped Fukaya category of a general plumbing space, which is defined and explained in Section \ref{subsection wrapped Fukaya category of plumbing spaces}. 
In Section \ref{subsection Ginzburg dg algebra and wrapped Fukaya category}, we discuss how the category appears in representation theory. 

We first introduce the definition of Ginzburg dg algebra.
The definition is standard and there are many references in the literature.
For example, we refer the reader to \cite{gin06, Keller11}. 

Let $Q$ be a finite quiver. 
In other words, $Q$ is a directed graph with finitely many arrows.
We also assume that $Q$ is graded, i.e., for each arrow $e \in E(Q)$, it is assigned a grading.
Let $d_e$ denote the grading of $e \in E(Q)$. 

The Ginzburg dg algebra associated to $Q$ (or equivalently, a quiver $Q$ with the zero potential $W = 0$), is defined as follows: 

\begin{dfn}
	\label{dfn Ginzburg dg algebra} 
	Let $Q$ be a graded quiver, and let $n \geq 3$ be a fixed integer. 
	\begin{enumerate}
		\item Let $\widetilde{Q}$ denote a graded quiver with the same vertices as $Q$ and whose arrows are 
		\begin{itemize}
			\item $x_e = v \to w$ for all $e = v \to w \in E(Q)$, 
			\item $y_e = w \to v$ for all $e = v \to w \in E(Q)$,
			\item $h_v = v \to v$ for all $v \in V(Q)$.
		\end{itemize}
		The grading of each arrow is given as 
		\[|x_e|=d_e,\quad |y_e|=2-n-d_e,\quad |h_v|=1-n.\]
		\item The {\em Ginzburg dg algebra associated to $Q$}, denoted as $\Gamma_n(Q)$ is a differential graded algebra such that 
		\begin{itemize}
			\item whose underlying graded algebra is the graded path algebra $k \widetilde{Q}$, i.e., the $n$-th component of $\Gamma(Q,W)$ consists of elements of the form $\sum_p \lambda_p p$, where $p$ denotes a path of $\tilde{Q}$ of degree $n$,
			\item whose differential $d_{\Gamma(Q,W)}$ (or simply $d$ if there is no chance of confusion) is the unique continuous linear endomorphism satisfying the graded Leibniz rule, which is defined as follows on the arrows of $\tilde{Q}$: For all $v \in V(Q), e \in E(Q)$,
			\[d(x_e) = 0,\quad d(y_e) = 0,\quad d(h_v) = 1_v \left(\sum_{e \in E(Q)} [x_e, y_e] \right) 1_v,\]
			where $1_v$ is the lazy path at $v$.
		\end{itemize}
	\end{enumerate} 
\end{dfn}

One could say that Theorem \ref{thm wrapped Fukaya category} provides/constructs a specific category from a given set of data $(Q,M,\s,d_e)$. 
Considering specific plumbing data, the corresponding wrapped Fukaya category should be equivalent to {\em Ginzburg dg algebra} associated to a quiver $Q$. 
\begin{cor}[Corollary 6.11 of \cite{kar-lee24-2}, Corollary 4.16 of \cite{asp21}]
	\label{cor Ginzburg dg algebra}
	Let $(Q,M,\s,d_e)$ be plumbing data satisfying that
	\[M(v) = S^n \text{  for all  } v \in V(Q), \qquad \s(e) = - (-1)^{\tfrac{n(n-1)}{2}} \text{  for all  } e \in E(Q). \]
	Then, the corresponding wrapped Fukaya category $\cW(P(Q,M,\s,d_e))$ is Morita equivalent to the {\em Ginzburg dg algebra} associated to the quiver $Q$, which is originally defined in \cite{gin06, kel11}. 
\end{cor}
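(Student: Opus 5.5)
The plan is to specialize Theorem~\ref{thm wrapped Fukaya category} to $M(v)=S^n$ and compare the resulting semifree dg category $\mathcal{G}$ with $\Gamma_n(Q)$ directly.

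First, the vertex contributions. For $M_v=S^n$ the manifold $M_v\setminus\{\mathrm{pt}\}$ is contractible, so $C_{-*}(\Omega_p(M_v\setminus \mathrm{pt}))\simeq k$. Hence there are no generating morphisms $\alpha_v^i$, and the map $\eta_v\colon k[z]\to k$ sends $z$ to $0$, since $|z|=2-n\neq 0$ for $n\geq 3$. To be careful, I will replace $C_{-*}(\Omega_p(M_v\setminus\mathrm{pt}))$ by the quasi-isomorphic algebra $k$. Because $\mathcal{G}$ is a semifree extension, I will justify this replacement by the standard fact that semifree extensions (homotopy pushouts) preserve quasi-equivalences, which also follows from the homotopy colimit description in \cite{kar-lee21}. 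After this step, $\mathcal{G}$ has objects $L_v$ and generators $x_e,y_e,h_v$, with the same degrees as in Definition~\ref{dfn Ginzburg dg algebra}, and with
\[
dh_v=\sum_{e=v\to\bullet}(-1)^{nd_e}y_ex_e+\sum_{e=\bullet\to v}(-1)^{n(n-1)/2}\s(e)\,x_ey_e .
\]

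Second, the signs. Substituting $\s(e)=-(-1)^{n(n-1)/2}$ turns the second sum into $-\sum_{e=\bullet\to v}x_ey_e$. The composition in $\mathcal{G}$ written as $y_ex_e$ (first $x_e$, then $y_e$) is a loop at $v$, and this corresponds in the path algebra to the path read in the opposite order. So $1_v[x_e,y_e]1_v$ contributes $\pm x_ey_e$ at the head vertex and $\mp(-1)^{|x_e||y_e|}y_ex_e$ at the tail vertex, with signs depending on the graded commutator convention. I then rescale generators to make the two differentials agree, for instance $x_e\mapsto \epsilon_e x_e$ with $\epsilon_e=\pm1$, or $h_v\mapsto -h_v$. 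The key parity identity is $(-1)^{|x_e||y_e|}=(-1)^{d_e(2-n-d_e)}=(-1)^{nd_e}$, since $d_e^2\equiv d_e \pmod 2$. This matches the factor $(-1)^{nd_e}$ in $dh_v$, so a single global choice of rescaling works for every edge. The resulting identification is an isomorphism of semifree dg categories between $\mathcal{G}$ and $\Gamma_n(Q)$, viewed as a category with object set $V(Q)$ and possibly with $\mathcal{G}$ replaced by its opposite to fix the composition order.

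Third, the passage to algebras. A dg category with finitely many objects is Morita equivalent to the dg algebra $\bigoplus_{v,w}\hom(L_v,L_w)$. Combining this with $\cW\simeq\Tw\,\mathcal{G}$ gives the Morita equivalence with $\Gamma_n(Q)$.

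I expect the main obstacle to be the sign bookkeeping in the second step. That step depends on the Koszul conventions for composition versus path concatenation, and it must work uniformly over all edges, including self-loops, where both sums appear at the same vertex. I would handle this by checking the self-loop case explicitly. There, the parity identity above is exactly what is needed, so that one rescaling convention is consistent at both ends of the edge.
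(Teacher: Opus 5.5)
Your route (specialize Theorem~\ref{thm wrapped Fukaya category}, replace the contractible loop-space algebras by $k$ so that $\eta_v(z)=0$, then match generators) is the one implicit in the paper, which only cites \cite{kar-lee24-2,asp21}. Your first and third steps are fine. The gap is in the sign step, at exactly the point you flagged, and it comes from an arithmetic error. We have $d_e(2-n-d_e)=2d_e-nd_e-d_e^2\equiv nd_e+d_e \pmod 2$, so $(-1)^{|x_e||y_e|}=(-1)^{(n+1)d_e}$, not $(-1)^{nd_e}$.

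Normalize the head terms to $-x_ey_e$, using $h_v\mapsto -h_v$ where needed. Then an arrow $e\colon t\to h$ contributes $(-1)^{nd_e}y_ex_e$ to $dh_t$ and $-x_ey_e$ to $dh_h$ in $\mathcal{G}$. In $\Gamma_n(Q)$ it contributes $(-1)^{(n+1)d_e}y_ex_e$ and $-x_ey_e$ instead. This holds for either path-concatenation convention, and passing to opposite categories multiplies both terms of $e$ by the same sign.

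Rescaling $x_e$ or $y_e$ also multiplies both contributions of $e$ by a common scalar. So the relative discrepancy $(-1)^{d_e}$ can only be absorbed by vertex rescalings $h_v\mapsto c_vh_v$ with $c_h=(-1)^{d_e}c_t$ for every arrow. Such $c_v$ exist exactly when every cycle in the underlying graph of $Q$ contains an even number of arrows of odd degree. In particular they do not exist for a self-loop of odd degree, which is precisely the case you planned to check by hand.

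A non-diagonal change of generators cannot rescue the argument either. Take one vertex with one loop, $n=5$ and $d_e=-1$, which is allowed by \eqref{eqn non-positively grading condition}. Then $|x|=-1$, $|y|=-2$, $|h|=-4$. The formula in Theorem~\ref{thm wrapped Fukaya category} gives $dh=-(xy+yx)$, while $\Gamma_5(Q)$ has $dh=[x,y]=xy-yx$. In both algebras $H^{-1}=k\,x$ and $H^{-2}=k\,x^2\oplus k\,y$, and $H^{-3}$ is spanned by $x^3,xy,yx$ modulo the single relation $dh$.

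A graded algebra isomorphism of cohomology must send $x\mapsto \alpha x$ and $y\mapsto \beta y+\gamma x^2$ with $\alpha\beta\neq 0$. It would then carry the relation $xy+yx=0$ to $2\alpha\beta\,xy+2\alpha\gamma\,x^3$. This is nonzero in $H^{-3}(\Gamma_5(Q))$ when $\mathrm{char}\,k\neq 2$.

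So, with the signs as printed, the two semifree algebras are not even quasi-isomorphic. With the parity corrected, your argument proves the statement only under the cycle parity condition above, for instance when all $d_e$ are even. The general case cannot be read off from Theorem~\ref{thm wrapped Fukaya category} as transcribed here. You would need to go back to the sign conventions of \cite{kar-lee24-2}, where the coefficient of $y_ex_e$ would have to agree with $(-1)^{|x_e||y_e|}$ up to vertex rescalings.
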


\begin{rmk}
	\label{rmk generalized Ginzburg dg algebra}
	Let $P$ be an arbitrary plumbing space whose plumbing pattern is $Q$. 
	Thanks to Theorem \ref{thm wrapped Fukaya category}, the wrapped Fukaya category of $P$ seems similar to the Ginzburg dg algebra over $Q$. 
	The differences are the generating morphisms of $C_{-*}\left(\Omega_p(M_v \setminus \text{pt})\right)$ on each vertex $v$ of $Q$ and $\eta_v(z)$ in $d h_v$. 
	For simplicity, we remember the similarity by saying that the wrapped Fukaya category of $P$ is a {\em generalized} Ginzburg dg algebra. 
\end{rmk}

In the rest of the current paper, we study categories of proper modules over wrapped Fukaya categories of plumbings. 
However, thanks to Corollary \ref{cor Ginzburg dg algebra}, one can replace the word ``wrapped Fukaya category'' with ``Ginzburg dg algebra''.

\section{Proper modules over wrapped Fukaya categories/Ginzburg dg algebras}
\label{section proper modules over wrapped Fukaya categories}
The main interest of the article is categories appearing in both symplectic topology and representation theory, which are wrapped Fukaya categories of plumbings in symplectic topology and Ginzburg dg algebras/categories in representation theory. 
In this section, we study {\em proper modules} over them.
Especially, we study a generation result of category of proper modules. 
More precisely, we show that a specific collection of proper modules {\em generate} all proper modules. 

We state the generation result, i.e., Theorem \ref{thm generation of proper modules} formally in Section \ref{subsection main result}, after we set notation. 
Section \ref{subsection simpler modules} proves lemmas that simplify Theorem \ref{thm generation of proper modules} and Section \ref{subsection proof of Theorem 1} proves the simplified version of Theorem \ref{thm generation of proper modules}.

We point out that in the section, we are using the term {\em wrapped Fukaya category}, but one can understand most of Section \ref{section proper modules over wrapped Fukaya categories} without expertise in symplectic topology. 

\subsection{Main result}
\label{subsection main result}
Through Section \ref{section proper modules over wrapped Fukaya categories}, we fix plumbing data $(Q,M,\s)$ and an integer $n \geq 3$. 
We also fix a field $k$ as the ground coefficient field for all categories.

We recall that plumbing data $(Q,M,\s)$ defines a plumbing space $P = P(Q,M,\s)$. 
To define Fukaya category of $P$, we need to fix extra data, called a grading structure.
To do this, we fix a collection of integers $\{d_e| e \in E(Q)\}$ encoding a grading structure. 
In other words, a quadruple $(Q,M,\s,d_e)$ is necessarily to define Fukaya category of $P$. 

In the section and the following sections, we require that a given grading structure satisfies the following condition:
\begin{dfn}
	\label{dfn non-positivcely grading condition}
	For plumbing data $(Q,M,\s)$, a grading structure $\{d_e|e \in E(Q)\}$ is {\em non-positively graded} if one could transform it by applying  Corollaries \ref{corollary direction reversing} and \ref{corollary grading changes} (as many as needed) to another set of data $(Q',M',\s',d_e')$ such that 
	\begin{gather}
		\label{eqn non-positively grading condition}
		2-n < d_e' \leq 0.
	\end{gather}
\end{dfn}

\begin{rmk}
	\label{rmk nonpositively grading condition}
	\mbox{}
	\begin{enumerate}
		\item We point that one can replace Equation \eqref{eqn non-positively grading condition} with
		\[2-n \leq d_e' \leq 0.\]
		If $e$ has $d_e' = 2-n$, one can reverse the direction of $e$ by applying Corollary \ref{corollary direction reversing} and the new grading structure satisfies Equation \eqref{eqn non-positively grading condition}. 
		Remark \ref{rmk degree 0 cycle} explains why we use Equation \eqref{eqn non-positively grading condition} instead of the above.
		\item If $(Q,M,\s,d_e)$ is plumbing data equipped with a non-positively grading structure, then the corresponding wrapped Fukaya category $\cW$ in Theorem \ref{thm wrapped Fukaya category} has generating morphisms whose degrees are non-positive. 
		Since its morphisms are non-positively graded, $\cW$ admits various nice properties. 
		For example, it implies that $\cW$ is {\em smooth} (see \cite[Proposition 1.37]{Keller-Wang23}), thus implies that the category of proper modules admits a Calabi--Yau structure. 
		\item One of the motivation (from symplectic topology) of studying proper modules is the expectation that the category of proper modules would be equivalent to the {\em compact Fukaya category}. 
		Note that compact Fukaya category is defined to be a Calabi--Yau category. 
		If the expectation holds, the category of proper modules also has to admit a Calabi--Yau structure, and (2) guarantees it.     
	\end{enumerate}
\end{rmk}

In the rest of the article, we assume that the fixed plumbing data $(Q,M,\s)$ is equipped with $\{d_e\}$ satisfying Equation \eqref{eqn non-positively grading condition}. 

To formally state Theorem \ref{thm generation of proper modules}, we set notation below.
\begin{dfn}
	\label{dfn cycle}
	A {\em cycle} of the given quiver $Q$ is an isotopy class of the image of a map
	\[\gamma : [0,k] \to Q,\]
	where $k>0$ is a natural number and $Q$ is the underlying graph of a quiver $Q$, satisfying that 
	\begin{enumerate}
		\item[(i)] $\gamma$ is injective, 
		\item[(ii)] $\gamma([i-1,i])$ is an edge of $Q$ , and
		\item[(iii)] $\gamma(0) = \gamma(k)$. 
	\end{enumerate}
\end{dfn}
We defined a cycle as an isotopy class, but for simplicity, we always identify a cycle with a specific representative $\mathrm{Im}(\gamma)$, or more simply with a map $\gamma$, such that $\gamma$ satisfies (i)--(iii) of Definition \ref{dfn cycle}. 

We point out technical points on Definition \ref{dfn cycle}:
(iii) justifies the name {\em cycle}.
But the word ``cycle'' in Definition \ref{dfn cycle} does not mean a general element of the first homology group $H_1(Q;\mathbb{Z})$ because of (i).
Note that $Q$ is a graph and thus, for a cycle $\mathrm{Im}(\gamma)$, there are no $\gamma_1, \gamma_2$ such that 
\[\mathrm{Im}(\gamma_1), \mathrm{Im}(\gamma_2) \subset \mathrm{Im}(\gamma) \text{  and  } \gamma = \gamma_1 + \gamma_2 \text{  in  } H_1(Q;\mathbb{Z}).\]

The following defines {\em vertices, edges}, and {\em degrees} of cycles. 
\begin{dfn}
	\label{dfn degree of a cycle}
	Let $\gamma: [0,k] \to Q$ be a cycle of $Q$. 
	\begin{enumerate}
		\item Let $V(\gamma)$ be a subset of $V(Q)$ defined as follows:
		\[V(\gamma) := \{\gamma(i) | i = 1, 2, \dots, k\}.\]
		\item Let $E(\gamma)$ be a subset of $E(Q)$ defined as follows: 
		\[E(\gamma) := \left\{e \in E(Q) | \text{  there exists $i$ such that  } e = \gamma([i-1,i]) \right\}.\]
		\item Let $e_i \in E(\gamma)$ satisfy that $e_i = \gamma([i-1,i])$.
		The {\em degree of $\gamma$}, denoted $|\gamma|$, is defined to be 
		\begin{gather}
		\label{eqn degree of a cycle}
			|\gamma|:= \sum_{\text{the head of  }e_i = \gamma(i)} d_{e_i} + \sum_{\text{the head of } e_i = \gamma(i-1)} \left(2-n - d_{e_i}\right). 
		\end{gather}
	\end{enumerate}
\end{dfn}

\begin{rmk}
	\label{rmk degree 0 cycle}
	For a cycle $\gamma: [0,k] \to Q$ with $E(\gamma) = \{e_i = \gamma([i-1,i]): i =1, \dots, k\}$, if $|\gamma| = 0$, the head of $e_i$ should be $\gamma(i-1)$. 
	If not, $|\gamma|$ contains $(2-n-d_{e_i})$ for some $i$, and then it should be negative since $d_{e_i}$ satisfies Equation \eqref{eqn non-positively grading condition}. 
	In this sense, we can say that a degree $0$ cycle {\em respects} the direction of the quiver $Q$. 
\end{rmk}

We give a name to the set of all degree $0$ cycles.
\begin{dfn}
	\label{dfn Cyc}
	Let $(Q,M,\s)$ be plumbing data equipped with a grading structure $\{d_e\}$ satisfying Equation \eqref{eqn non-positively grading condition}.
	\begin{enumerate}
		\item Let $\overline{\mathrm{Cyc}}$ denote the set of all degree $0$ cycle of $Q$. 
		\item Let $\mathrm{Cyc}$ denote the quotient of a collection of subsets of $\overline{\mathrm{Cyc}}$ 
		\[\left\{\{\gamma_1, \dots, \gamma_k\} \subset \overline{\mathrm{Cyc}} | \cup_{i = 1}^k \mathrm{Im}(\gamma_i) \text{  is a connected subset of  } Q\right\}\]
		by the following equivalence relation: 
		\[\{\gamma_1, \dots, \gamma_k\} \sim \{\gamma'_1, \dots, \gamma'_{k'}\} \text{  if and only if  } \cup_{i=1}^k \mathrm{Im}(\gamma_i) = \cup_{i=1}^{k'} \mathrm{Im}(\gamma'_i).\]
		\item Let $\mathbb{B}$ denote the set 
		\[\mathbb{B}:= V(Q) \sqcup \mathrm{Cyc}.\]
		\item Definition \ref{dfn degree of a cycle} (1) and (2) extend naturally to any $C \in \mathbb{B}$.
		Specifically, for $v \in V(Q) \subset \mathbb{B}$ and $C = \{\gamma_1, \dots, \gamma_m\} \in \mathrm{Cyc} \subset \mathbb{B}$, 
		\[V(v) = \{v\}, E(v) = \varnothing, V(C) = \cup_{i=1}^mV(\gamma_i), E(C) = \cup_{i=1}^mE(\gamma_i).\]
		\end{enumerate}
\end{dfn}

\begin{exa}
	We give an example of a quiver $Q$ and its cycles. 
	Our example quiver $Q$ has two vertices and four arrows as follows: 
	\[\begin{tikzcd}
		\bullet \ar[r,"b", bend left=20] \ar[r, "a", bend left =60] & \bullet \ar[l,"c",bend left=20] \ar[l, "d", bend left=60]
	\end{tikzcd}\]
	We assume that every arrow $e \in \{a, b, c, d\}$ has the same $d_e = 0$.
	Then, $\overline{\mathrm{Cyc}}, \mathrm{Cyc}$ are 
	\[\overline{\mathrm{Cyc}} = \{ca, da, cb, db\}, \mathrm{Cyc} := \Big\{\{ca\}, \{da\}, \{cb\}, \{db\}, \{cb, db\}, \{ca, da\}, \{da, db\}, \{ca, cb\}, \{ca, db\} \sim \{cb, da\}\Big\},\]
	where, for example, $ca$ denotes the cycle obtained by concatenating two arrows $c$ and $a$. 
\end{exa}

Finally, we define a specific collection of proper modules, which appears in Theorem \ref{thm generation of proper modules}.
To do that, we use the usual notation:
For a randomly given quadruple $(Q,M,\s,d_e)$, let $\cW$ denote the corresponding wrapped Fukaya category, i.e., $\cW = \cW(P(Q,M,\s);d_e)$. 
\begin{dfn}
	\label{dfn generators} 
	For every $(C,s) \in \mathbb{B} \times \mathbb{Z}$, we set $\cC_{(C,s)}$ as a collection of proper modules $M$ over $\cW$, i.e., $M \in \Prop \cW$, satisfying the following four conditions:
	\begin{enumerate}
		\item[(a)] If $v \notin V(C)$, then $M(L_v) = 0$.
		\item[(b)] If $v \in V(C)$, then $M(L_v) \in \Mod k$ is nonzero and concentrated at degree $s$. 
		\item[(c)] If $e \notin E(C)$, then $M(\overline{x}_e) = 0$. 
		\item[(d)] If $e = w \to v \in E(C)$ with $v, w \in V(C)$, then 
		\[\sum_{\overline{f} \in \hom_{\cW^{op}}^0(L_w, L_w)} M(\overline{f})\big(\mathrm{Im}M(\overline{x}_e)\big) = M(L_w).\]  
	\end{enumerate}
\end{dfn}
We note that notations used in Definition \ref{dfn generators}, such as $x_e$ or the bar notation, are defined in Notation \ref{notation morphisms}.

\begin{rmk}
	\label{rmk indecomposable}
	Since the condition (d) of Definition \ref{dfn generators} looks technical, we briefly mention the idea behind the condition in the remark. 
	\begin{enumerate}
		\item We note that Definition \ref{dfn generators} is motivated from symplectic topology, and that a module satisfying Definitions \ref{dfn generators} (a)--(d) corresponds to a Lagrangian (equipped with a local systems) with some conditions. 
		One of the geometric conditions that the Lagrangian should satisfy is the {\em connectedness}, and its algebraic counter part is the condition (d). 
		\item It would be natural to expect that the connectedness condition for Lagrangians would correspond to the indecomposable condition for proper modules.
		However, we should take the extra structure, i.e., local system on a Lagrangian, into consideration. 
		Since we do not restrict local systems to be indecomposable, then the corresponding proper modules would not be an indecomposable one. 
		For example, we take a local system that can be written as a direct sum of two non-trivial local systems, then the corresponding proper module is also written as a direct sum of two local systems. 
		See Remark \ref{rmk indecomposable modules in an example} for an example. 
		\item We also note that we use the condition (d) in the proof of Theorem \ref{thm generation of proper modules}, given in Section \ref{subsection proof of Theorem 1}. 
		See Equation \eqref{eqn induction step} and arguments following it. 
	\end{enumerate}
	
\end{rmk}

\begin{thm}
	\label{thm generation of proper modules}
	Let $(Q,M,\s)$ be plumbing data and $\{d_e\}$ satisfy Equation \eqref{eqn non-positively grading condition}.  
	Then, every proper module over $\cW(P(Q,M,\s);d_e)$ is generated by taking cones of modules in 
	\[\bigcup_{(C,s) \in \mathbb{B}\times \mathbb{Z}} \cC_{(C,s)}.\]
	Or equivalently, $\bigcup_{C \in \mathbb{B}} \cC_{(C,0)}$ generates all proper modules by taking cones and shifts.  
\end{thm}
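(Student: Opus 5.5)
Let $M$ be a proper module. The plan is to induct on the total dimension $\dim M := \sum_{v,d} \dim H^d(M(L_v))$, finding at each stage a module $N \in \cC_{(C,s)}$ and a closed morphism $N \to M$ (or $M \to N$) whose cone has strictly smaller total dimension.

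\textbf{Step 1: reduce to a strict, minimal model.} Since $\cW$ is semifree, I will use the description of $\Prop\,\cW$ recalled above as dg functors $\cW^{op}\to \Perf\,k$. By Proposition \ref{proposition HMod} and Remark \ref{rmk HMod}, each $M(L_v)$ can be taken to be a graded vector space $\bigoplus_d H^d[-d]$ with zero differential. I will then argue, using homotopy transfer along the generators, that $M$ is quasi-isomorphic to a module in which each generating morphism acts by a map of its own degree. For most generators I expect the non-positive grading \eqref{eqn non-positively grading condition} to force this action to vanish between the relevant degrees. Concretely, let $s$ be the top degree with $M(L_v)^s \neq 0$ for some $v$. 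I only need the restriction of $M$ to this top degree.

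\textbf{Step 2: identify the degree-zero part.} The generators acting on top-degree elements with degree-$0$ output are:
\begin{itemize}
\item the $\overline{x}_e$ with $d_e = 0$ (note $|y_e| = 2-n-d_e < 0$ by the strict inequality);
\item the degree-$0$ elements of $C_{-*}(\Omega(M_v\setminus \mathrm{pt}))$, forming the group algebra $k[\pi_1]$.
\end{itemize}
Since $n \ge 3$, I expect $h_v$ and $z$ to have negative degree and so not to act within $M^s$.

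Hence $M^s := \bigoplus_v M(L_v)^s$ is a finite-dimensional representation of the degree-$0$ subquiver $Q_0$ (arrows with $d_e=0$) decorated with local systems at each vertex. The top-degree subspace is closed under all morphisms of degree $\ge 0$. Because $\cW$ is non-positively graded, I expect $M^s$ to be a quotient module: there is a map $M \to M^s[-s]$, where the target is extended by zero. Dually, the lowest degree gives a submodule.

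\textbf{Step 3: extract a generator.} Inside the $Q_0$-representation $M^s$, I will pick a nonzero subrepresentation $N$ that is ``connected'' in the sense of condition (d). The support of $N$ should be either a single vertex or a union of degree-$0$ cycles, i.e.\ some $C\in\mathbb{B}$; this uses Remark \ref{rmk degree 0 cycle}. The idea is that vertices at which $N$ is generated by incoming $\overline{x}_e$ images should be closed up under arrows of $Q_0$. An arrow leaving the support but not returning contributes a nilpotent (acyclic) part. For such a part, a sink or source vertex of the subquiver gives a sub- or quotient module supported at one vertex, which lies in $\cC_{(v,s)}$.

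So the procedure is: if the maps $\overline{x}_e$ on $M^s$ are not ``cyclically surjective'', take a single vertex $v$ where condition (d) fails and split off $M(L_v)^s$ modulo images. Otherwise, the support of the images is a union of degree-$0$ cycles and satisfies (a)--(d). Either way I obtain $N\in\cC_{(C,s)}$ that is a quotient of $M$ (equivalently $M^s$). The kernel, or the cone of $M\to N$, has smaller total dimension, and induction finishes the argument. The degree-$(1-n)$ homotopy data needed to make $N$ an $A_\infty$-module (the action of $h_v$ with $dh_v = \sum \pm y_e x_e + \pm x_e y_e+\eta_v(z)$) should be trivially consistent because every term has negative degree and vanishes on a single-degree module.

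\textbf{Main obstacle.} The delicate point is Step 1: justifying that top-degree truncation gives an honest module quotient and a closed morphism in $\Prop\,\cW$, rather than only a map of underlying complexes. This needs care when $M$ is a general $A_\infty$-module with higher actions. I would first try to replace $M$ by a strictly unital minimal model and use a degree argument. Higher operations $\mu^{k}(m,\overline{a}_1,\dots)$ land in degree $|m|+\sum|a_i|+1-k$, and I would verify that non-positivity makes all components into the top degree compatible. Getting condition (d) and the cycle-union support (rather than arbitrary subquivers) exactly right is the second subtle point.
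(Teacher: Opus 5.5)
Your outer structure is sound, and Step~1 is not really the obstacle. In the model the paper uses (strict dg functors from the semifree $\cW^{op}$ to graded vector spaces with zero differential), every generating morphism has degree $\le 0$. So $\bigoplus_{t<s}M(L_v)^t$ is a strict submodule, and the projection onto $M^s$ is a closed morphism with all higher components zero. This is the mirror image of Lemma~\ref{lem extra condition 1}, which peels off the bottom degree as a submodule.

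\textbf{The gap is Step~3.} As written it is not well posed: you choose a subrepresentation $N$ of $M^s$ and then claim $N$ is a quotient of $M$. The dichotomy you propose is also false, because (d) is a condition on one arrow at a time. Here is an example. Put spheres at vertices $a,b,c$, set all $d_e=0$, and take a module in degree $s$ with $M(L_a)=M(L_b)=k$ and $M(L_c)=k^2$. Its only nonzero arrow actions are:
\begin{itemize}
\item the identity maps $M(L_a)\to M(L_b)$ and $M(L_b)\to M(L_a)$ along a $2$-cycle;
\item a map $M(L_a)\to M(L_c)$ with image $\langle e_1\rangle$;
\item a loop $\ell$ at $c$ acting by $e_1\mapsto e_2\mapsto 0$.
\end{itemize}
Condition (d) fails only for $\ell$. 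Yet $M(L_c)$ modulo the images of all incoming arrows is $0$, so your single-vertex quotient is zero. The ``otherwise'' branch does not apply either, since the arrow from $a$ to $c$ lies on no cycle. The naive sink-vertex submodule $M(L_c)$ lies in no $\cC_{(C,s)}$: $\ell$ acts nonzero, which violates (c) for $C=c$ and (d) for $C=\{\ell\}$. The generators actually present are the quotient supported on the $2$-cycle and the submodule $\langle e_2\rangle$ at $c$, and your procedure produces neither.

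\textbf{The missing idea} is to take $N$ to be a \emph{simple} quotient of $M^s$ in the heart, which exists by finite-dimensionality, and to prove that simple objects lie in $\bigcup_C\cC_{(C,s)}$. Suppose $N(\overline{x}_e)\neq 0$ for an arrow $e=w\to v$.
\begin{itemize}
\item The submodule of $N$ generated by $\mathrm{Im}\,N(\overline{x}_e)\subset N(L_w)$ is nonzero, hence equals $N$. Its value at $L_w$ is exactly the left-hand side of (d), so (d) holds.
\item Its value at $L_v$ is nonzero, which gives $\overline{g}\in\hom^0_{\cW^{op}}(L_w,L_v)$ with $N(\overline{g})N(\overline{x}_e)\neq 0$. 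This yields a directed, degree-$0$ cycle through $e$ all of whose arrows act nonzero.
\item Hence $E(N)$ is a union of degree-$0$ cycles. It is connected because $N$ is indecomposable.
\end{itemize}
So (a)--(d) hold with $C$ this union, or with $C=v$ when $E(N)=\varnothing$.

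With this lemma your induction on total dimension closes, since the cone of $M\to N$ is (a shift of) the kernel, which has smaller total dimension. The resulting argument is degree truncation plus Jordan--H\"older in the finite-length heart, and it is shorter than the paper's. The paper instead splits a degree-$0$ module along the classes of $\sim_M$. When (d) fails for some arrow $e$, it inducts on both the submodule generated by $\mathrm{Im}\,M(\overline{x}_e)$ and its cone, rather than peeling off one generator at each step.
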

In the rest of the paper, we call modules in $\bigcup_{(C,s) \in \mathbb{B}\times \mathbb{Z}} \cC_{(C,s)}$ {\em the generators}.

Thanks to Section \ref{subsection Ginzburg dg algebra and wrapped Fukaya category}, Theorem \ref{thm generation of proper modules} also proves a generation result for Ginzburg dg algebras. 
\begin{cor}
	\label{cor generation of ginzburg dg algebra}
	Let $Q$ be a graded quiver with a grading $\{d_e\}_{e \in E(Q)}$ satisfying Equation \eqref{eqn non-positively grading condition}. 
	For $n \geq 3$, let $\Gamma_n(Q)$ denote the $n$-Calabi--Yau graded Ginzburg dg algebra over $Q$. 
	Every proper module over $\Gamma_n(Q)$ is generated by 
	\[\bigcup_{(C,s) \in \mathbb{B}\times \mathbb{Z}} \cC_{(C,s)}.\]
\end{cor}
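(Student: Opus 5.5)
The plan is to deduce Corollary~\ref{cor generation of ginzburg dg algebra} from Theorem~\ref{thm generation of proper modules} by realizing $\Gamma_n(Q)$ as the specific representation $\cW(P(Q,M,\s);d_e)$ of Theorem~\ref{thm wrapped Fukaya category}. I would choose the plumbing data as in Corollary~\ref{cor Ginzburg dg algebra}: $M_v = S^n$ for every $v$, and $\s(e) = -(-1)^{n(n-1)/2}$ for every $e$. The first step is to make the equivalence explicit at the level of dg categories. I will not just quote it as a Morita equivalence, because the classes $\cC_{(C,s)}$ are defined in terms of the objects $L_v$ and the morphisms $\overline{x}_e$. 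I also regard $\Gamma_n(Q)$ as a dg category whose objects are the vertices, with the lazy paths $1_v$ as identities.

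Since $S^n \setminus \{\mathrm{pt}\}$ is contractible, $C_{-*}(\Omega_p(S^n\setminus \mathrm{pt}))\simeq k$. There are therefore no generators $\alpha^i_v$, and $\eta_v(z)=0$ because $z$ maps into a contractible loop space and $n\ge 3$ forces $|z|\neq 0$. Consequently the category $\mathcal{G}$ of Theorem~\ref{thm wrapped Fukaya category} is the semifree category on $x_e, y_e, h_v$, with the same degrees as in Definition~\ref{dfn Ginzburg dg algebra}. Its differential is
\[
dh_v=\sum_{e=v\to\bullet}(-1)^{nd_e}y_ex_e-\sum_{e=\bullet\to v}x_ey_e .
\]
I then compare this with $d(h_v)=1_v\bigl(\sum_e[x_e,y_e]\bigr)1_v$, where $[\,,\,]$ is the graded commutator. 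The two differentials agree after rescaling the generators by signs, for instance replacing $y_e$ by $\pm y_e$ and possibly $h_v$ by $-h_v$. This rescaling is a bookkeeping check on the exponents $(-1)^{nd_e}$ versus $(-1)^{d_e(2-n-d_e)}$. It yields a dg isomorphism $\mathcal{G}\cong\Gamma_n(Q)$ that sends $L_v\mapsto v$ and $x_e\mapsto \pm x_e$.

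Next, I transfer the grading hypothesis. The statement assumes that $\{d_e\}$ satisfies Equation~\eqref{eqn non-positively grading condition} directly, so Theorem~\ref{thm generation of proper modules} applies verbatim to $\cW(P(Q,M,\s);d_e)\cong\mathcal{G}$, without any use of Corollaries~\ref{corollary direction reversing} or~\ref{corollary grading changes}. Restriction along the isomorphism identifies $\Prop\,\Gamma_n(Q)$ with $\Prop\,\mathcal{G}$. Here I use that the module categories are computed as dg functors to $\Perf\,k$, since both categories are semifree, by the result of \cite{canonaco-ornaghi-stellari-2} recalled in Section~\ref{subsection preliminaries on category theory}. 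Passing to $\Tw$ or $\Perf$ does not change $\Prop$.

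Finally, I check that conditions (a)--(d) of Definition~\ref{dfn generators} are preserved under this identification. Conditions (a) and (b) only involve the values on the objects. Conditions (c) and (d) are insensitive to replacing $x_e$ by $\pm x_e$: image and vanishing are unchanged, and $\hom^0(L_w,L_w)$ corresponds to $1_w\Gamma_n(Q)^01_w$. Hence the classes $\cC_{(C,s)}$ defined for $\Gamma_n(Q)$ correspond exactly to those for $\cW$, and the generation statement transfers. The only real obstacle is the sign comparison in the first step. If a global rescaling does not match the signs directly, I would fall back on the fact that $\s$ may be altered by reversing orientations of the cores, as in Proposition~\ref{proposition direction reversing}. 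Alternatively, I would cite the precise sign conventions of Corollary~\ref{cor Ginzburg dg algebra} from \cite{kar-lee24-2}, which already assert the identification on generators.
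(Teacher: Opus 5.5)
Your route is the paper's: it gets the corollary in one line by combining Corollary~\ref{cor Ginzburg dg algebra} with Theorem~\ref{thm generation of proper modules}. Your insistence on an identification sending $L_v\mapsto v$ and $x_e\mapsto\pm x_e$, so that conditions (a)--(d) transfer, makes explicit something the paper leaves implicit.

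\textbf{The sign step fails in general.} The sign bookkeeping you assert is not true as stated. Take $e\colon v\to w$. In $\mathcal{G}$, the coefficient of $y_ex_e$ in $dh_v$ is $(-1)^{nd_e}$ and the coefficient of $x_ey_e$ in $dh_w$ is $-1$. With your graded commutator, $\Gamma_n(Q)$ has instead $-(-1)^{|x_e||y_e|}=-(-1)^{nd_e+d_e}$ and $+1$. Write $x_e\mapsto\alpha_ex_e$, $y_e\mapsto\beta_ey_e$, $h_v\mapsto\gamma_vh_v$. Matching coefficients gives $\alpha_e\beta_e=-(-1)^{d_e}\gamma_v$ and $\alpha_e\beta_e=-\gamma_w$, hence $\gamma_w=(-1)^{d_e}\gamma_v$. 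This is solvable only if $\sum d_e$ is even around every cycle of $Q$.

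It fails for a loop with $d_e$ odd, which the hypothesis allows once $n\ge 4$. For example, take $n=4$ and a single loop with $d_e=-1$, so $|x|=|y|=-1$. Then $dh=yx-xy$ in $\mathcal{G}$ but $dh=xy+yx$ in $\Gamma_4(Q)$. In characteristic $\neq 2$, the kernel of $H^{-1}\otimes H^{-1}\to H^{-2}$ is antisymmetric in the first case and symmetric in the second. So the two algebras are not even quasi-isomorphic.

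Your first fallback does not rescue this. Reversing the orientation of a core only flips $\gamma_v$ and leaves the sign of a loop unchanged. Also, Proposition~\ref{proposition direction reversing} is about reversing arrows, not orientations.

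\textbf{How to repair it.} Theorem~\ref{thm generation of proper modules} holds for every sign function $\s$, so choose $\s(e)=-(-1)^{d_e+n(n-1)/2}$ edge by edge. Then $\gamma_v=1$, $\alpha_e=1$, $\beta_e=-(-1)^{d_e}$ gives an honest dg isomorphism $\mathcal{G}\cong\Gamma_n(Q)$ with $L_v\mapsto v$ and $x_e\mapsto x_e$. The rest of your argument then goes through verbatim.

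A simpler alternative avoids the identification altogether. The proof of Theorem~\ref{thm generation of proper modules} uses only that the category is semifree on generators of non-positive degree, and that degree-$0$ morphisms are words in the $x_e$ with $d_e=0$. The shape of $dh_v$ never enters: it has degree $2-n<0$, and any module concentrated in a single degree kills it. So that proof applies directly to $\Gamma_n(Q)$, with no plumbing needed.
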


\begin{rmk}
	Before moving onto the next subsection, we would like to note that symplectic topology hinted Definition \ref{dfn generators}, even though Definition \ref{dfn generators} and Theorem \ref{thm generation of proper modules} can be written without using symplectic topology (Theorem \ref{thm generation of proper modules} contains a word ``wrapped Fukaya category'' but the word just means specific categories, which Theorem \ref{thm wrapped Fukaya category} describes). 
	We expected that a collection of compact Lagrangians satisfying some {\em geometric} conditions would generate the compact Fukaya category, and also expect that the compact Fukaya category is equivalent to the category of proper modules over wrapped Fukaya categories of plumbings (or Ginzburg dg algebras).
	In other words, compact Lagrangians with the geometric conditions would generate all proper modules.
	We translated the geometric conditions into {\em algebraic} conditions of proper modules, and the results are conditions (a)--(d) of Definition \ref{dfn generators}. 
\end{rmk}

We prove Theorem \ref{thm generation of proper modules} in Sections \ref{subsection simpler modules} and \ref{subsection proof of Theorem 1}.
Our strategy for the proof is to show that $M \in \Prop \cW$ can be generated by {\em simpler} modules. 
Here, the word ``simpler modules'' means modules satisfying some specific conditions.
Lemmas \ref{lem extra condition 1}, \ref{lem extra condition 2}, and \ref{lem extra condition 3} in Section \ref{subsection simpler modules} will prove that any module $M$ is generated from those simpler modules, thus it will be enough to show that the generators in Theorem \ref{thm generation of proper modules} generate the simpler modules.
It is the main content of Section \ref{subsection proof of Theorem 1}.

\subsection{Simpler modules} 
\label{subsection simpler modules}
As usual, $(Q,M,\s,d_e)$ denotes plumbing data equipped with a non-positively graded $\{d_e\}$. 
Also as we did before, we use $\cW := \cW(P(Q,M,\s);d_e)$ to indicate the corresponding wrapped Fukaya category. 
The purpose of the subsection is to show that every $M \in \Prop \cW$ is an iterated mapping cone of modules with extra conditions --
Lemmas \ref{lem extra condition 1}, \ref{lem extra condition 2}, and \ref{lem extra condition 3} prove it. 

To state the first extra condition and also Lemma \ref{lem extra condition 1}, we introduce the following definition: 
\begin{dfn}
	\label{dfn concentrated at} 
	We say that $M \in \Prop \cW$ is {\em concentrated at degree $s$} if for every $v \in V(Q)$, $M(L_v)$ is concentrated at degree $s$, i.e., $M(L_v)^t = 0$ if $t \neq s$. 
\end{dfn}

\begin{lem}
	\label{lem extra condition 1}
	\mbox{}
	\begin{enumerate}
		\item Every $M \in \Prop \cW$ admits a tower 
		\begin{equation}
			\label{eqn extra condition 1}
			\begin{tikzcd}
				0 \arrow[r] & \ast \arrow[r] \arrow[d] & \dots  \arrow[r] & M \arrow[d] \\
				& M_1 \arrow[lu, dashed] & \dots           & M_k \arrow[lu, dashed]
			\end{tikzcd},
		\end{equation}
		such that 
		\begin{itemize}
			\item $M_i$ is concentrated at degree $s_i$ and 
			\item $s_1 < s_2 < \dots < s_k$. 
		\end{itemize}
		\item Thus, it is enough to prove Theorem \ref{thm generation of proper modules} for modules $M$ concentrated at degree $0$. 
	\end{enumerate}
\end{lem}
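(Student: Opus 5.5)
The plan is to filter $M$ by cohomological degree, using that $\cW$ is non-positively graded. Since $\cW$ is semifree, by the remark after the definition of semifree categories we may model $M$ as a strict dg functor $\cW^{op}\to\Perf\,k$. By Remark \ref{rmk HMod}, each $M(L_v)$ is a finite direct sum $\bigoplus_t K_{v,t}[-t]$ with zero differential, so $M(L_v)$ is a bounded graded vector space. Finiteness of $V(Q)$ then shows that only finitely many degrees $s_1<\dots<s_k$ occur among all the $M(L_v)$.

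Next I will use non-positivity. By Remark \ref{rmk nonpositively grading condition}(2), every generating morphism of $\cW$ (the $x_e,y_e,h_v$, and the generators of the loop-space algebras, which live in non-positive cohomological degree) has degree $\le 0$. So every morphism $\overline f$ of $\cW^{op}$ acts by $M(\overline f)\colon M(L_w)^t\to M(L_v)^{t+|f|}$ with $|f|\le 0$, which lowers or preserves degree.

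Hence for each $s$ the truncation $\tau_{\le s}M$, defined by $(\tau_{\le s}M)(L_v)=\bigoplus_{t\le s}M(L_v)^t$, is preserved by all generating morphisms. Because $M(L_v)$ has zero differential, it is a dg submodule of $M$. The quotient $\tau_{\le s}M/\tau_{\le s-1}M$ is concentrated at degree $s$: on it every generator of strictly negative degree acts by zero, and only degree $0$ morphisms survive.

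This gives short exact sequences of strict dg modules $0\to\tau_{\le s_{i-1}}M\to\tau_{\le s_i}M\to M_i\to 0$, with $M_i$ concentrated at degree $s_i$. Such sequences are exact triangles in $\Mod\,\cW$, so $\tau_{\le s_i}M\simeq\Cone(M_i[-1]\to\tau_{\le s_{i-1}}M)$. Starting from $\tau_{\le s_1}M=M_1$ and ending at $\tau_{\le s_k}M=M$, this produces the tower \eqref{eqn extra condition 1}. Each $M_i$ is still proper, since it is finite-dimensional at each object.

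The one subtle point I expect is ensuring that the strict-module model is legitimate, i.e.\ that the truncation is compatible with $A_\infty$-morphisms. This is handled by \cite{canonaco-ornaghi-stellari-2} together with the semifreeness of $\cW$, which lets us work with honest dg functors and read exact triangles off short exact sequences.

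For (2), each $M_i$ concentrated at degree $s_i$ is $N_i[-s_i]$ for some module $N_i$ concentrated at degree $0$. The tower shows that $M$ lies in the triangulated closure of the $N_i$ under cones and shifts. Therefore it suffices to prove Theorem \ref{thm generation of proper modules} for modules concentrated at degree $0$, using the second formulation of the theorem, which allows shifts.
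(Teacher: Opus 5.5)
Your proposal is correct and uses essentially the same idea as the paper: non-positivity of the generating morphisms makes the low-degree part of $M$ a submodule, and the pieces concentrated in a single degree are split off in increasing order of degree. The paper does this by induction on $|\supp(M)|$, peeling off only the minimal-degree piece via an explicit $A_\infty$-natural transformation $T$ with $T^d=0$ for $d\ge 1$ and reassembling with the octahedral axiom; you instead write down the whole truncation filtration $\tau_{\le s_1}M\subset\cdots\subset\tau_{\le s_k}M=M$ at once and read the triangles off the short exact sequences, which also makes explicit the identification of $\Cone(T)$ with the quotient module that the paper uses implicitly.
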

\begin{proof}
	The second part (2) is trivially true if (1) holds.
	The following is the proof of (1).
	
	For a given $M$, let us define the following set of integers
	\[\supp(M) := \{i | \text{  there exists $v \in V(Q)$ such that  } M(L_v)^i \neq 0\}.\]
	We note that we are assuming $M$ is a functor from $\cW^{op}$ to $\HMod k$, as mentioned in Remark \ref{rmk HMod}.
	Moreover, $M$ is proper, thus $\supp(M)$ is a finite subset of $\mathbb{Z}$. 
	We prove Lemma \ref{lem extra condition 1} by an induction on the cardinal number $|\supp(M)|$. 
	
	Before starting the proof, we note that if there exists a tower given in Equation \eqref{eqn extra condition 1}, then 
	\[\supp(M) \subset \cup_{i=1}^k \supp(M_i) = \{s_1, \dots, s_k\}.\]
	In the proof, we prove also the equality of both sides of the above equation, i.e., 
	\begin{gather}
		\label{eqn equality} 
		\supp(M) = \cup_{i=1}^k \supp(M_i) = \{s_1, \dots, s_k\}
	\end{gather} 
	
	\noindent{\bf Base step:} 
	The base step is the case of $|\supp(M)| =1$, which is equivalent to say that $M$ is concentrated at one specific degree. 
	Thus, Lemma \ref{lem extra condition 1} (1) and Equation \eqref{eqn equality} hold trivially.
	
	\noindent{\bf Induction step:} 
	To prove the induction step, let us assume that $|\supp(M)| = k$ and that Lemma \ref{lem extra condition 1} (1) and Equation \eqref{eqn equality} hold for modules $N$ with $|\supp(N)| = k-1$. 
	Under the assumption, let 
	\[\supp(M) = \{s_1, \dots, s_k\},\]
	such that $s_1 < s_2 <\dots < s_k$. 
	
	Our strategy is to construct $M_1 \in \Prop \cW$ such that 
	\begin{enumerate}
		\item[(I)] $M_1$ is concentrated at degree $s_1$, 
		\item[(II)] there exists a degree $0$ morphisms $T: M_1 \to M$ in $\Prop \cW$ such that
		\[\supp\left(\Cone(T)\right) = \{s_2, \dots, s_k\}.\]
	\end{enumerate}
	
	If there exists such $M_1$ and $T$, we have 
	\begin{equation*}
		\begin{tikzcd}
			& & M_1 \arrow[r, "T"] & M \arrow[d]\\
			0 \arrow[r] & \ast \arrow[r] \arrow[d] & \dots  \arrow[r] & \ast = \cone(T) \arrow[d] \arrow[lu, dashed] \\
			& M_2 \arrow[lu, dashed] & \dots           & M_k \arrow[lu, dashed]
		\end{tikzcd},	
	\end{equation*}
	Here, the induction hypothesis guarantees the existence of the tower for $\Cone(T)$, so $M_i$ is concentrated at degree $s_i$ for all $i$. 
	
	Applying octahedral axiom iteratively, one has 
	\begin{equation*}
		\begin{tikzcd}
			0 \arrow[r] & M_1 \arrow[r] \arrow[d] & \ast \arrow[r] \arrow[d] & \dots \arrow[r] & \ast \arrow[r] \arrow[d] & M \arrow[d] \\
			& M_1 \arrow[lu, dashed] & M_2 \arrow[lu, dashed] & \dots & M_{k-1} \arrow[lu, dashed] & M_k \arrow[lu, dashed]
		\end{tikzcd}.
	\end{equation*} 
	It proves the induction step, so it is enough to construct $M_1$ and $T$ satisfying (I) and (II). 
	
	By using a specific representation of $\cW$, which is described in Theorem \ref{thm wrapped Fukaya category}, constructing $M_1$ is equivalent to give the following data:
	\begin{itemize}
		\item cochain complex (with zero differentials) $M_1(L_v)$ for all $v \in V(Q)$ in object level, and 
		\item a collection of morphisms in $\Mod k, \{M_1(\overline{f}) | \overline{f} \text{  is a generating morphism in  } \cW^{op}\}$, 
	\end{itemize}
	satisfying the definition of dg-functors. 
	We note that the overline notation $\overline{f}$ is introduced in Notation \ref{notation morphisms}.
	
	\noindent{\bf Construction of $M_1$ (object level):}
	For every $v \in V(Q)$, we set 
	\[M_1(L_v)^t = \begin{cases}
		M(L_v)^{s_1} & \text{  if  } t = s_1, \\
		0 & \text{  otherwise}.
	\end{cases}\]
		
	\noindent{\bf Construction of $M_1$ (morphism level):}
	First, we emphasize that $M_1(L_v)^t \subset M(L_v)^t$ for all $t \in \mathbb{Z}$ and for all $v \in V(Q)$. 
	Thus, for $\overline{f} \in \hom_{\cW^{op}}(L_v,L_w)$, we set $M_1(\overline{f})$ as 
	\[M_1(\overline{f})^t := M(\overline{f})^t|_{M_1(L_v)^t} : M_1(L_v)^t \to M(L_w)^{t + |\overline{f}|} \text{  for all  } t \in \mathbb{Z}.\]
	Below, we show that  
	\begin{gather}
		\label{eqn target of a morphism}
		M(\overline{f})^t\left(M_1(L_v)^t\right) \subset M_1(L_w)^{t+|\overline{f}|}.
	\end{gather}
	If so, one can see $M_1(\overline{f})$ as a linear map from $M_1(L_v)^t$ to $M_1(L_w)^{t+|\overline{f}|}$, and it completes the morphism level construction.
	
	If $t \neq s_1$, then the domain is the zero $k$-module, so Equation \eqref{eqn target of a morphism} holds.
	When $t = s_1$, if $|\overline{f}| <0$, the target $M(L_w)^{s_1 + |\overline{f}|}$ is also the zero $k$-module, since $s_1$ is the minimum of $\supp(M)$. 
	Thus, 
	\[M(\overline{f})^t\left(M_1(L_v)^t\right) = 0 = M_1(L_w)^{t+|\overline{f}|}.\]
	Finally, if $t= s_1$ and $|\overline{f}|=0$ (note that $|\overline{f}| \leq 0$ because of the non-positively graded condition), 
	\[M(\overline{f})^t\left(M_1(L_v)^t\right) \subset  M(L_w)^{t + |\overline{f}|} = M(L_w)^{s_1} = M_1(L_w)^{s_1}.\] 
	
	\noindent{\bf Construction of $M_1$ (dg-functor):}
	We constructed $M_1$ in object and morphism levels, and now we need to show that $M_1$ is a {\em dg-functor}. 
	In other words, we need to show the following:
	\begin{enumerate}
		\item[(i)] $M_1(\overline{f}_2 \circ \overline{f}_1) = M_1(\overline{f}_2) \circ M_1(\overline{f}_1)$ for any composable morphisms $\overline{f}_2$ and $\overline{f}_1$ in $\mathcal{W}^{op}$. 
		\item[(ii)] $M_1 \circ d_{\mathcal{W}^{op}} = d_{\mathrm{HMod}k} \circ M_1$.
		Especially, since $d_{\mathrm{HMod}k} =0$, it is enough to show that $M_1\left(d_{\cW^{op}} \overline{f}\right) = 0$ for any morphism $\overline{f}$ in $\mathcal{W}^{op}$. 
	\end{enumerate}
	
	To prove (i), we first recall that if $|\overline{f}|< 0$, then $M_1(\overline{f})$ should be the zero map. 
	Thus, we need only to check it for the case of $|\overline{f}_1| = |\overline{f}_2|= 0$.
		
	For $\overline{f} \in \hom_{\cW^{op}}^0(L_v, L_w)$, $M_1(\overline{f})$ is given as a collection of linear maps $\{M_1(\overline{f})^t : M_1(L_v)^t \to M_1(L_w)^t\}_{t \in \mathbb{Z}}$ such that 
	\[M_1(\overline{f})^t = \begin{cases}
			 M(\overline{f})^{s_1} & \text{  if  } t = s_1, \\
			 0 & \text{  otherwise}.
		\end{cases}
	\]
 	Thus, it is easy to observe that $M_1(\overline{f}_2 \circ \overline{f}_1)^t = M_1(\overline{f}_2)^t \circ M_1(\overline{f}_1)^t$, for all $t \in \mathbb{Z}$ and $\overline{f}_i$ with $|\overline{f}|_i =0$. 
	
	Now, we prove (ii), i.e., $M_1\left(d_{\cW^{op}} \overline{f}\right) = 0$ for all $\overline{f}$. 
	If $d_{\cW^{op}} \overline{f}$ has degree other than $0$, it naturally holds, since $M_1$ is nonzero only for degree $0$ morphisms. 
	If $d_{\cW^{op}} \overline{f}$ has degree $0$,  $M_1\left(d_{\cW^{op}} \overline{f}\right)^{s_1} = M\left(d_{\cW^{op}} \overline{f}\right)^{s_1} = 0$ and $M_1\left(d_{\cW^{op}} \overline{f}\right)^t = 0$ for all $t \neq s_1$ by the construction of $M_1$.
	Thus, (ii) holds. 
	
	So far, we constructed $M_1$ satisfying (I). 
	Below, we construct $T: M_1 \to M$ satisfying (II).  
	
	\noindent{\bf Construction of $T: M_1 \to M$:}	
	We constructed $M_1$ from a given proper module $M$.
	Now, see $M_1$ and $M$ as $A_\infty$-functors from $\cW^{op}$ to $\Mod k$, we construct a natural transformation $T: M_1 \to M$ of degree $0$. 
	For the preliminaries on natural transformations between $A_\infty$-functors, we refer the reader to the famous textbook \cite{sei08}.
	
	We first need to construct a pre-natural transformation $T = \{T^d\}_{d \geq 0}$, i.e., $T^0 \in \hom_{\Mod k}\left(M_1(L_v),M(L_v)\right)$ for any object $L_v$ of $\cW^{op}$ and a collection of multi-linear maps for all integers $d \geq 1$ 
	\[T^d: \hom_{\cW^{op}}(X_{d-1},X_d) \otimes \dots \otimes \hom_{\cW^{op}}(X_0,X_1) \to \hom_{\Mod k}(M_1(X_0), M(X_d))[-d].\]  
	
	To define $T^0$, we recall that $M_1(L_v) \subset M(L_v)$ for all $v \in V(Q)$. 
	Thus, there exists a natural embedding of $M_1(L_v)$ into $M(L_v)$. 
	We set $T^0(L_v)$ as the natural embedding.  
	
	For all $d \geq 1$, we have to set $T^d$ as the zero map.
	We note that it is the only possible choice because of the degree reason.  
	 
	The next step is to show that the constructed $T$ is a natural transformation, or in other words, that $T$ is a closed morphism in $\Prop \cW$. 
	Note that the differential of $A_\infty$-natural transformation $T$ is involved with $A_\infty$-structures of $\cW^{op}$ or $\Mod k$. 
	Since these $A_\infty$-structures are simple (note that those categories are dg categories and have the trivial higher structures), the computation of the differential of $T$ is feasible, although it is hard to compute in usual. 
	See \cite[Chatper (1d)]{sei08} for details.  
	
	After the simple computation, the differential of $T$ is zero if and only if 
	\begin{gather}
		\label{eqn natural transformation}
		\mu_{\Mod k}^2\left(M(\overline{f}),T^0\right) + (-1)^{-(|\overline{f}|-1)}\mu_{\Mod k}^2\left(T^0, M_1(\overline{f})\right) = 0,
	\end{gather}
	for every homogeneous morphism $\overline{f}$ in $\cW^{op}$. 
	We note that $\mu_{\Mod k}^2$ is the composition of morphisms in $\Mod k$.
	Thus, we simply say $\mu_{\Mod k}^2(M(\overline{f}),T^0) = M(\overline{f}) \circ T^0$ and $\mu_{\Mod k}^2(T^0, M_1(\overline{f})) = T^0 \circ M_1(\overline{f})$. 
	
	To prove Equation \eqref{eqn natural transformation}, we compare $\left(M(\overline{f})\circ T^0\right)^t$ and $\left(T^0 \circ M_1(\overline{f})\right)^t$ for each $t \in \mathbb{Z}$.  
	If $t \neq s_1$, then the domain is $M_1(L_v)^t = 0$, so Equation \eqref{eqn natural transformation} holds. 
	If $t = s_1$ and $|\overline{f}| <0$, then $\left(M(\overline{f})\circ T^0\right)^t$ and $\left(T^0 \circ M_1(\overline{f})\right)^t$ are also both zero maps, and Equation \eqref{eqn natural transformation} holds. 
	Finally, if $t = s_1$ and $|\overline{f}|=0$, then $T^0$ is the identity map. 
	Thus, Equation \eqref{eqn natural transformation} holds.
	
	\noindent{\bf $\mathbf{Cone(T)}$:}
	By the definition of mapping cones in $\Mod k$, 
	$\Cone(T)^{s_1} =0$ and $\Cone(T)^t = M(L_v)^t$ for all $t \neq s_1$. 
	Thus, $\supp\left(\Cone(T)\right) = \supp(M) \setminus \{s_1\}$, i.e., the constructed $T$ satisfies (II). 
\end{proof}

Lemma \ref{lem extra condition 1} divides a random proper module $M$ into simpler modules $M_1, \dots, M_k$ in Equation \eqref{eqn extra condition 1}, each of which is concentrated at one degree.
Note that each $M_i$ is the degree $s_i$ part of $M$ and the {\em order} on $\supp(M)$ helps us to divide $M$ into simpler pieces. 

Similarly, when $M$ is concentrated at a degree, Lemma \ref{lem extra condition 2} claims that one can divide $M$ into simpler modules. 
Similar to the proof of Lemma \ref{lem extra condition 1}, a {\em partial order} on $V(Q)$ helps us to to divide $M$ into simpler pieces. 
We first introduce the partial order on $V(Q)$. 
\begin{dfn}
	\label{dfn equivalence relation 1}
	\mbox{}
	\begin{enumerate}
		\item Let $v, w \in V(Q)$. 
		We define an equivalence relation $v \sim w$ as follows: 
		\[v \sim w \text{  if and only if  } \exists C \in \mathbb{B} \text{  such that  } v, w \in V(C).\]
		Note that $\mathbb{B}$ and $V(C)$ are defined in Definition \ref{dfn Cyc}. 
		Or equivalently, $v \sim w$ if and only if there exist nonzero degree $0$ morphisms $\overline{f} \in \hom_{\cW^{op}}^0(L_v,L_w)$ and $\overline{g} \in \hom_{\cW^{op}}^0(L_w,L_v)$.
		\item Since $\sim$ is an equivalence relation on $V(Q)$, it induces a partition of $V(Q)$. 
		Let $I$ be the index set of the partition, i.e., 
		\[V(Q) = \bigsqcup_{s \in I} V_s(Q),\]
		such that $v \sim w$ if and only if there exists a unique $s \in I$ satisfying $v, w \in V_s(Q)$. 
	\end{enumerate}
\end{dfn} 

Lemma \ref{lem partial order 1} says that the equivalence relation induces a partial order with a nice condition.
\begin{lem}
	\label{lem partial order 1} 
	One can give a partial order $\preceq$ on the index set $I$ so that the following holds: 
	Let $v \in V_s(Q)$ and $w \in V_t(Q)$. 
	If $\hom_{\cW^{op}}^0(L_w,L_v) \neq 0$, then $s \preceq t$. 
\end{lem}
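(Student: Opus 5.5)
We define the relation on $I$ by reachability: say $s \preceq t$ if there is a finite chain of indices $s = s_0, s_1, \dots, s_m = t$ such that for each $j$ there exist vertices $v_j \in V_{s_j}(Q)$ and $w_j \in V_{s_{j+1}}(Q)$ with $\hom^0_{\cW^{op}}(L_{w_j}, L_{v_j}) \neq 0$. Reflexivity ($m=0$) and transitivity (concatenating chains) are immediate, and the required property holds by construction with $m=1$. The work lies in antisymmetry.

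The first step is to describe $\hom^0_{\cW}$ combinatorially. By Theorem \ref{thm wrapped Fukaya category} and Equation \eqref{eqn non-positively grading condition}, every generating morphism has non-positive degree. Moreover, $|y_e| = 2-n-d_e < 0$ and $|h_v| = 1-n < 0$. Since $n \geq 3$ and $M_v$ is connected, the generators of $C_{-*}(\Omega_p(M_v\setminus \mathrm{pt}))$ can be taken in degrees $\leq 0$. Degree-$0$ chains on a based loop space are spanned by the component classes, which for simply connected $M_v \setminus \mathrm{pt}$ (or, in general, up to loop generators at the same vertex) stay at $v$. Hence a nonzero degree-$0$ element of $\hom_{\cW}(L_v, L_w)$ is a linear combination of words involving only the $x_e$ with $d_e = 0$ and the degree-$0$ loop generators at single vertices. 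Because degree $0$ is the top degree, there are no degree-$1$ elements, so no boundaries need to be quotiented out when passing to chain-level nonzeroness. Consequently, $\hom^0_{\cW}(L_v,L_w) \neq 0$ implies that there is a directed path from $v$ to $w$ in $Q$ using only degree-$0$ arrows. For $v \neq w$ the converse holds as well, since such a path gives a nonzero word in the free (semifree) category.

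The next step is antisymmetry. Suppose $s \preceq t$ and $t \preceq s$. Concatenating the two chains and inserting, inside each class $V_{s_j}(Q)$, degree-$0$ paths between the endpoint vertices yields a closed directed walk in $Q$ made of degree-$0$ arrows and passing through vertices of both $V_s(Q)$ and $V_t(Q)$. Such paths exist because two vertices in the same class lie in a common $C \in \mathbb{B}$, and the degree-$0$ cycles respect arrow directions (Remark \ref{rmk degree 0 cycle}).

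A closed directed walk decomposes into simple directed cycles, each of degree $0$ since all of its arrows have $d_e = 0$, so each lies in $\overline{\mathrm{Cyc}}$. Consecutive cycles in this decomposition share vertices, so their union is connected and defines an element $C \in \mathrm{Cyc}$. This $C$ contains a vertex of $V_s(Q)$ and a vertex of $V_t(Q)$, and therefore $s = t$ by the definition of $\sim$.

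The main obstacle is the bookkeeping in this decomposition: we must check that the union of the extracted simple cycles is genuinely connected and contains the chosen vertices from both classes. To handle this, we will decompose the closed walk inductively by cutting it at its first repeated vertex. The pieces then share vertices along the way, which gives connectedness, and every vertex of the walk lies on some piece.

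We must also be careful with the loop-space generators at a single vertex. These contribute only to $\hom^0(L_v,L_v)$ and never connect distinct vertices, so they do not affect the argument.
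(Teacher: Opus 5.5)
Your proof is correct and is essentially what the paper compresses into ``trivial from Definition~\ref{dfn equivalence relation 1} and non-positivity.'' Order the classes by reachability through degree-$0$ morphisms, which by non-positivity are sums of degree-$0$ paths in the free category; antisymmetry then is just the mutual-reachability characterization of $\sim$, which you could cite directly from the ``or equivalently'' clause of Definition~\ref{dfn equivalence relation 1} instead of re-deriving it by decomposing the closed walk into cycles. One inessential slip: having no degree-$1$ elements makes every degree-$0$ element closed, but it does not rule out degree-$0$ boundaries, since degree-$(-1)$ loop-space generators can have degree-$0$ differentials; this does not matter here because $\hom$ denotes the chain-level morphism space.
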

\begin{proof}
	It is trivial from Definition \ref{dfn equivalence relation 1} and from that every generating morphism of $\cW$ is non-positively graded. 
\end{proof}

\begin{rmk}
	\label{rmk partial order}
	We note that the partial order given in Lemma \ref{lem partial order 1} will be used in Section \ref{sec:stab} for constructing a stability condition on $\Prop\cW$.
\end{rmk}

\begin{lem}
	\label{lem extra condition 2}
	Let us assume that $M \in \Prop \cW$ is a module concentrated at degree $0$. 
	\begin{enumerate}
		\item $M$ admits a tower 
		\begin{equation}
			\label{eqn extra condition 2}
			\begin{tikzcd}
				 0 \arrow[r] & \ast \arrow[r] \arrow[d] & \dots  \arrow[r] & M \arrow[d] \\
				& M_1 \arrow[lu, dashed] & \dots           & M_k \arrow[lu, dashed]
			\end{tikzcd},
		\end{equation}
		such that 
		\begin{itemize}
			\item there exists an {\em injective} map $f: \{1, \dots, k\} \to I$ so that $M_i(L_v) = 0$ for every $v \notin V_{f(i)}(Q)$, 
			\item if $f(i) \prec f(j)$, i.e., $f(i) \preceq f(j)$ and $f(i)\neq f(j)$, then $i < j$.  
		\end{itemize}
		\item Thus, it is enough to prove Theorem \ref{thm generation of proper modules} for modules $M$ having a unique $i_M \in I$ such that $M(L_v) = 0$ for every $v \notin V_{i_M}(Q)$.
		Or equivalently, it is enough to prove Theorem \ref{thm generation of proper modules} under the assumption that for all $v, w \in V(Q)$, $v \sim w$. 
	\end{enumerate}
\end{lem}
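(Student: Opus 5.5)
The plan is to imitate the proof of Lemma \ref{lem extra condition 1}, with the linear order on $\supp(M)$ replaced by the partial order $\preceq$ of Lemma \ref{lem partial order 1}. Part (2) follows from part (1), since each $M_i$ is supported on a single class $V_{f(i)}(Q)$. So we prove (1) by induction on the number of classes $s \in I$ such that $M(L_v) \neq 0$ for some $v \in V_s(Q)$. We call this set of classes the class-support $\supp_I(M)$.

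If $|\supp_I(M)| \le 1$, there is nothing to prove. Otherwise, pick a class $s_0 \in \supp_I(M)$ that is minimal for $\preceq$ among the elements of $\supp_I(M)$. Define a submodule $M_1$ by
\[
M_1(L_v) = M(L_v) \text{ for } v \in V_{s_0}(Q), \qquad M_1(L_v) = 0 \text{ otherwise}.
\]
On morphisms, $M_1(\overline{f})$ is the restriction of $M(\overline{f})$. Everything is concentrated in degree $0$, so only degree-$0$ morphisms act nontrivially, and $M(\overline{f})=0$ whenever $|\overline{f}|<0$.

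The key check is that $M_1$ is closed under the action. Take $\overline{f} \in \hom^0_{\cW^{op}}(L_v,L_w)$ with $v \in V_{s_0}(Q)$ and $M(\overline{f})|_{M(L_v)} \neq 0$. Then $\overline{f} \neq 0$, and viewing it as a nonzero element of $\hom^0_{\cW}(L_w,L_v)$, Lemma \ref{lem partial order 1} (with the roles arranged as stated there) gives a comparison between $s_0$ and the class $t$ of $w$. Here $M(L_w) \neq 0$, so $t \in \supp_I(M)$. Minimality of $s_0$ then forces $t = s_0$.

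This direction bookkeeping is where I expect the main obstacle. If it runs the wrong way, I instead choose $s_0$ maximal, or build $M_1$ as a quotient rather than a submodule. In the quotient version, $M_1$ is the part of $M$ at a class admitting no outgoing action, so $M \to M_1$ is a module map and we take its fiber. In either case exactly one of these choices makes the restriction or projection well defined.

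Once the correct choice is made, the rest is routine:
\begin{itemize}
\item $M_1$ is a dg functor. Composition is respected because it is a restriction. Compatibility with differentials, $M_1(d\overline{f})=0$, holds because $M(d\overline{f})=0$.
\item The inclusion $T^0 \colon M_1 \to M$, with $T^d = 0$ for $d \geq 1$, is a closed natural transformation. This is the same verification as Equation \eqref{eqn natural transformation}.
\item $\Cone(T)$ is quasi-isomorphic to the complementary module, supported on $\supp_I(M)\setminus\{s_0\}$ and concentrated in degree $0$.
\end{itemize}

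Applying the induction hypothesis to $\Cone(T)$ and using the octahedral axiom as in Lemma \ref{lem extra condition 1} gives the tower. The injective map $f$ records the classes in the order in which they were peeled off. At each step we removed a $\preceq$-extremal class of the remaining support, so the indices are compatible with $\preceq$ in the required way: $f(i) \prec f(j)$ implies $i<j$. If the extremal choice turned out to be maximal rather than minimal, we reverse the listing, which corresponds to building the tower from the other end.
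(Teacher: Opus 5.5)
Your proposal is correct and is essentially the paper's proof: induct on $|\supp_I(M)|$ (the paper uses exactly this set), split off the restriction of $M$ to a $\preceq$-minimal class of $\supp_I(M)$ as a submodule via the inclusion $T$, and assemble the tower with the octahedral axiom as in Lemma~\ref{lem extra condition 1}, with $f(1)$ the chosen minimal class. The direction you left open goes your way, so no fallback is needed: for $\overline{f}\in\hom^0_{\cW^{op}}(L_v,L_w)$ with $v\in V_{s_0}(Q)$ and $w\in V_t(Q)$, Lemma~\ref{lem partial order 1} gives $t\preceq s_0$, hence $t=s_0$ by minimality. (For the ``equivalently'' clause in (2), which you do not address, the paper simply replaces $Q$ by the full subquiver on $V_{i_M}(Q)$.)
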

\begin{proof}
	With slight modifications, the proof of Lemma \ref{lem extra condition 1} proves Lemma \ref{lem extra condition 2}. 
	The modifications we need include the following:
	\begin{itemize}
		\item In the proof of Lemma \ref{lem extra condition 1}, we used $\supp(M)$. 
		Here, we use 
		\[\supp_I(M):= \{ s \in I | \text{  there exists  } v \in V_s(Q) \text{ such that  } M(L_v) \neq 0 \}.\]
		\item For the induction step in the proof of Lemma \ref{lem extra condition 1}, we fix the minimal element $s_1$ of $\supp(M)$. 
		Here, we fix a element $s \in \supp_I(M)$ such that there is no $t \in \supp_I(M)$ satisfying $t \prec s$.
		Note that such a minimal element is not unique and one can choose a minimal element. 
		Then, $f(1) = s$. 
		\item The first half of (2) is trivially true from (1). 
		For the second half, one can consider a sub-quiver $Q' \subset Q$ such that $V(Q') = V_{i_M}(Q)$. 
	\end{itemize} 
\end{proof}

Lemma \ref{lem extra condition 2} is saying that one can generate a given proper module $M$ from a simpler module. 
The key idea is to utilize a partial order on $V(Q)$ given in Definition \ref{dfn equivalence relation 1} and Lemma \ref{lem partial order 1}. 
We note that the given quiver $Q$ determines the equivalence relation in Definition \ref{dfn equivalence relation 1} and we do {\em not} need to use a specific property of a given module $M$, even though we discuss about the way of splitting $M$ into simpler modules. 

Below, we introduce a {\em stronger} equivalence relation on $V(Q)$ that is dependent on the given module $M$. 
The stronger equivalence relation implies Lemma \ref{lem extra condition 3}, which is a stronger version of Lemma \ref{lem extra condition 2}.

\begin{dfn}
	\label{dfn equivalence relation 2}
	Let $M \in \Prop \cW$ be a module concentrated at degree $0$. 
	\begin{enumerate}
		\item Let $v, w \in V(Q)$. 
		We say that $v \sim_M w$ if and only if there exist $\overline{f} \in \hom_{\cW^{op}}^0(L_v,L_w)$ and $\overline{g} \in \hom_{\cW^{op}}^0(L_w,L_v)$ such that 
		\[M(\overline{f}), M(\overline{g}) \neq 0.\]
		\item Since $\sim_M$ is an equivalence relation on $V(Q)$, it induces a partition of $V(Q)$. 
		Let $I_M$ be the index set of the partition, i.e., 
		\[V(Q) = \bigsqcup_{s \in I_M} V^M_s(Q),\]
		such that $v \sim_M w$ if and only if there exists a unique $s \in I_M$ satisfying $v, w \in V^M_s(Q)$. 
	\end{enumerate}
\end{dfn}

\begin{lem}
	\label{lem extra condition 3} 
	Let us assume that $M \in \Prop \cW$ is concentrated at degree $0$. 
	\begin{enumerate}
		\item One can give a partial order $\preceq_M$ on the index set $I_M$ so that the following holds: 
		Let $v \in V^M_s(Q)$ and $w \in V^M_t(Q)$. 
		If $\hom_{\cW^{op}}^0(L_w,L_v) \neq 0$, then $s \preceq_M t$. 
		\item The given module $M$ admits a tower 
		\begin{equation}
			\label{eqn extra condition 3}
			\begin{tikzcd}
				0 \arrow[r] & \ast \arrow[r] \arrow[d] & \dots  \arrow[r] & M \arrow[d] \\
				& M_1 \arrow[lu, dashed] & \dots           & M_k \arrow[lu, dashed]
			\end{tikzcd},
		\end{equation}
		such that 
		\begin{itemize}
			\item there exists an {\em injective} map $f: \{1, \dots, k\} \to I_M$ so that $M_i(L_v) = 0$ for every $v \notin V^M_{f(i)}(Q)$, 
			\item if $f(i) \prec_M f(j)$, i.e., $f(i) \preceq_M f(j)$ and $f(i)\neq f(j)$, then $i < j$.  
		\end{itemize}
		\item Thus, it is enough to prove Theorem \ref{thm generation of proper modules} for modules $M$ such that for all $v, w \in V(Q)$, $v \sim_M w$. 
	\end{enumerate}
\end{lem}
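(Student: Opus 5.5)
The plan is to repeat the argument of Lemma \ref{lem extra condition 2}, which itself copies the proof of Lemma \ref{lem extra condition 1}. The only change is that the quiver-level relation $\sim$ is replaced by the module-dependent relation $\sim_M$. Throughout, $M$ is concentrated at degree $0$, so every generating morphism $\overline{f}$ of $\cW^{op}$ with $|\overline{f}|<0$ acts by zero. Hence $M$ is determined by the vector spaces $M(L_v)=M(L_v)^0$ together with the maps $M(\overline{f})$ for $\overline{f}$ of degree $0$. On this degree-$0$ part, $M$ is an honest representation of the (non-dg) category $H^0$-level degree-$0$ morphisms, because $M(d\overline{g})=0$ for all $\overline{g}$ and compositions are respected strictly.

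\textbf{Step 1: the equivalence relation and the order.} I would first make $\sim_M$ and $\preceq_M$ rigorous. Composition can kill things: $M(\overline{g}\circ\overline{f})=M(\overline{g})M(\overline{f})$ may vanish even when both factors are nonzero. So I would define $v\to_M w$ to mean that $M(\overline{f})\neq 0$ for some $\overline{f}\in\hom^0_{\cW^{op}}(L_w,L_v)$, i.e. a nonzero map $M(L_w)\to M(L_v)$. I would then take $\sim_M$ to be the equivalence relation generated by mutual reachability under the reflexive–transitive closure of $\to_M$. The strongly connected components of this directed graph are the classes $V^M_s(Q)$. The condensation of a directed graph is acyclic, so reachability descends to a partial order $\preceq_M$ on $I_M$. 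By construction, $v\in V^M_s$, $w\in V^M_t$ and $M(\overline{f})\neq0$ for some $\overline{f}\in \hom^0_{\cW^{op}}(L_w,L_v)$ imply $s\preceq_M t$.

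This is the version of (1) that is actually needed. When $\hom^0_{\cW^{op}}(L_w,L_v)\neq0$ but $M$ annihilates all such morphisms, the inequality is not needed at all. If one insists on (1) literally, I would instead refine the order of Lemma \ref{lem partial order 1}: use $\preceq$ on the $\sim$-classes and the reachability order within each $\sim$-class. Arrows in $\cW^{op}$ between different $\sim$-classes only go downward, so this combined relation is still a partial order. I expect this bookkeeping to be the main obstacle, since it is where the claim that $\sim_M$ is an equivalence relation must be justified.

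\textbf{Step 2: the tower.} I would induct on the cardinality of $\supp_{I_M}(M)=\{s\in I_M \mid M(L_v)\neq0 \text{ for some } v\in V^M_s(Q)\}$. Choose a $\preceq_M$-minimal $s$ in this set and set $f(1)=s$. Define $M_1(L_v)=M(L_v)$ for $v\in V^M_s(Q)$ and $M_1(L_v)=0$ otherwise, with $M_1(\overline{f})$ the restriction of $M(\overline{f})$. For $v\in V^M_s$ and $\overline{f}:L_v\to L_u$ of degree $0$, either $M(\overline{f})=0$ or $u$ lies in a class $\preceq_M s$. By minimality of $s$ (classes below $s$ outside it carry zero modules, or do not exist), $M(\overline{f})$ lands in $M_1(L_u)$. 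Thus $M_1$ is a sub-dg-functor, and the dg-functor checks (i), (ii) are verbatim as in Lemma \ref{lem extra condition 1}.

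The inclusion $T^0$, with $T^d=0$ for $d\ge1$, is a closed degree-$0$ natural transformation by the same computation as in \eqref{eqn natural transformation}. Its cone is quasi-isomorphic to the quotient module, which is supported on $\supp_{I_M}(M)\setminus\{s\}$. The quotient is again concentrated at degree $0$, and its own relation $\sim$ can only be coarser-compatible with the order already fixed, so induction and the octahedral axiom give the tower \eqref{eqn extra condition 3}, with $f$ injective and compatible with $\prec_M$.

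\textbf{Step 3: the reduction (3).} Each $M_i$ is supported on a single class $V^M_{f(i)}$. Restricting the quiver to that class, as in Lemma \ref{lem extra condition 2}(2), gives modules whose vertices are pairwise $\sim_{M_i}$-equivalent. Here I would check that the restriction $M_i$ still satisfies $v\sim_{M_i}w$ there, since the nonzero maps witnessing reachability stay inside the class. Combined with Lemmas \ref{lem extra condition 1} and \ref{lem extra condition 2}, this reduces Theorem \ref{thm generation of proper modules} to such modules.
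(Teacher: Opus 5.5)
Your main argument is the paper's own. The paper proves this lemma only by saying that the proofs of Lemmas~\ref{lem partial order 1} and~\ref{lem extra condition 2} go through with $\sim_M$ in place of $\sim$: pick a $\preceq_M$-minimal class in the support, take the restricted submodule, use the inclusion and its cone, and induct. That is exactly your Step~2. Replacing the relation of Definition~\ref{dfn equivalence relation 2} by mutual reachability under nonzero maps $M(\overline{f})$ is a genuine repair, since as written that relation need not be transitive. Your $M$-dependent form of (1) is exactly what the submodule argument uses.

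The fallback you offer for the literal (1) is wrong, and no order can rescue it. If $v\sim w$, then $\hom^0_{\cW^{op}}$ is nonzero in both directions. So the literal (1), together with antisymmetry, forces $v$ and $w$ into the same $\sim_M$-class. Refining $\preceq$ by reachability inside a $\sim$-class violates exactly this.

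A concrete example: take vertices $x,y$ and arrows $a\colon x\to y$, $b\colon y\to x$ with $d_a=d_b=0$ and $M_x=M_y=S^n$. Let the module have $M(L_x)=M(L_y)=k$, $M(\overline{x}_a)=1$, $M(\overline{x}_b)=0$, with all negative-degree generators acting by $0$. Every degree-$0$ morphism $L_x\to L_y$ in $\cW^{op}$ involves $\overline{x}_b$, so $x$ and $y$ lie in different classes for either definition of $\sim_M$. Yet $\overline{x}_a\in\hom^0_{\cW^{op}}(L_y,L_x)$ and $\overline{x}_b\in\hom^0_{\cW^{op}}(L_x,L_y)$ would give $[x]\preceq_M[y]\preceq_M[x]$. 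So (1) should simply be stated in your form.

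A smaller point concerns the induction. Do not appeal to the quotient's relation being ``coarser-compatible'': a strictly coarser relation would break injectivity of $f$ into $I_M$. Instead, note that a chain of nonzero maps starting in the minimal class cannot leave it. Hence on the remaining vertices the quotient has the same nonzero maps, classes and order as $M$. Equivalently, run the induction with $(I_M,\preceq_M)$ fixed throughout.
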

\begin{proof}
	The proofs of Lemmas \ref{lem partial order 1} and \ref{lem extra condition 2} prove this lemma after simple modifications. 
\end{proof}

\begin{rmk}
	\label{rmk two partial orders}
	By Definitions \ref{dfn equivalence relation 1} and \ref{dfn equivalence relation 2}, it is easy to see that if $v \sim_M w$ then $v \sim w$. 
	In this sense, $\sim_M$ is {\em stronger} than $\sim$. 
	Similarly, Lemma \ref{lem extra condition 3} is stronger than Lemmas \ref{lem extra condition 2}, and in order to prove Theorem \ref{thm generation of proper modules}, we only need Lemmas \ref{lem extra condition 1} and \ref{lem extra condition 3} and do not need Lemma \ref{lem extra condition 2}. 
	
	Thus, we could remove Definition \ref{dfn equivalence relation 1}, Lemmas \ref{lem partial order 1} and \ref{lem extra condition 2}. 
	However, we decided to contain them because of the following difference between $\sim$ and $\sim_M$:
	In order to define $\sim_M$, one needs to fix a specific module $M$, differently from $\sim$. 
	It means that $\sim_M$ could be more useful to study a property of the specific module $M$, but in order to study the whole category $\Prop \cW$, $\sim_M$ might not be helpful. 
	However, $\sim$ is defined from the category $\cW$ without using other extra information.
	Thus, it is possible to help us to understand the structure of $\Prop \cW$. 
	One example is Section \ref{sec:stab}: We used $\sim$ in order to construct a stability condition on $\Prop \cW$. 
\end{rmk}

\subsection{Proof of Theorem \ref{thm generation of proper modules}}
\label{subsection proof of Theorem 1}
In Section \ref{subsection proof of Theorem 1}, we prove Theorem \ref{thm generation of proper modules}, which is the main result of Section \ref{section proper modules over wrapped Fukaya categories}.

Let us recall the setting. 
A quadruple $(Q,M,\s,d_e)$ denotes a given set of plumbing data equipped with non-positively graded $\{d_e\}$, or more precisely, satisfying Equation \eqref{eqn non-positively grading condition}. 
As usual, $\cW$ denotes the corresponding wrapped Fukaya category $\cW(P(Q,M,\s);d_e)$ with an explicit quiver representation given in Theorem \ref{thm wrapped Fukaya category}. 
Thanks to Lemmas \ref{lem extra condition 1} and \ref{lem extra condition 3}, we assume that $M \in \Prop \cW$ satisfies the following: 
\begin{itemize}
	\item $M$ is concentrated at degree $0$, and
	\item for all $v, w \in V(Q)$, $v \sim_M w$. 
\end{itemize}

The first step of the proof is to assign $M$ an element of $\mathbb{B}$.
\begin{lem}
	\label{lem assigned cycle} 
	For the given $M$, let
	\[E(M):= \{e \in E(Q)| M(\overline{x}_e) \neq 0\}.\]
	Then, either 
	\begin{itemize}
		\item $E(M) = \varnothing$ or
		\item there exists $C_M \in \mathrm{Cyc}$ such that $E(C_M) = E(M)$. 
	\end{itemize}
	Moreover, if $E(M) = \varnothing$, then $M \in \cC_{(v,0)}$ for a vertex $v \in V(Q)$. 
\end{lem}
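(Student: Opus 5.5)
The plan is to use that $M$ is concentrated at degree $0$: then $M$ can only be nonzero on degree-$0$ morphisms of $\cW^{op}$. So I first describe those morphisms, and then turn the relation $\sim_M$ into directed paths of arrows lying in $E(M)$.

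\textbf{Step 1: degree-$0$ morphisms.} By Theorem \ref{thm wrapped Fukaya category} and Equation \eqref{eqn non-positively grading condition}, the generators have degrees
\[
|h_v| = 1-n<0, \qquad |y_e| = 2-n-d_e<0, \qquad |x_e| = d_e\le 0,
\]
and the generators $\alpha_v^i$ of $C_{-*}(\Omega_p(M_v\setminus \mathrm{pt}))$ are non-positively graded. Hence every element of $\hom^0_{\cW^{op}}(L_v,L_w)$ is a linear combination of monomials in the $\overline{x}_e$ with $d_e=0$ and the degree-$0$ elements $\overline{\alpha}_v^i$. In particular, if $M(\overline{x}_e)\neq 0$ then $d_e=0$, since otherwise $M(\overline{x}_e)$ maps degree $0$ to degree $d_e\neq 0$.

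Now suppose $\overline{g}\in\hom^0_{\cW^{op}}(L_w,L_v)$ satisfies $M(\overline{g})\neq 0$. Some monomial of $\overline{g}$ has nonzero image under $M$. Since $M$ is a functor, $M$ of that monomial is the composite of $M$ of its factors, so every factor $\overline{x}_e$ in it satisfies $M(\overline{x}_e)\neq 0$. Reading this monomial in $\cW$ gives a directed path in $Q$, following arrow directions, from $v$ to $w$ whose arrows all lie in $E(M)$ (possibly the empty path if $v=w$).

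\textbf{Step 2: each $e\in E(M)$ lies in a degree-$0$ cycle inside $E(M)$.} Let $e=w\to v\in E(M)$, so $\overline{x}_e\colon L_v\to L_w$. By assumption $v\sim_M w$, so Step 1 gives a directed $E(M)$-path from $v$ to $w$. Choose a shortest such path; it visits no vertex twice. Appending $e$ gives a closed directed walk. Because the path is shortest, this walk is injective in the sense of Definition \ref{dfn cycle}, and if $v=w$ it is just the self-loop $e$. So it is a cycle $\gamma_e$ that follows the directions of $Q$ and has all $d=0$. By Equation \eqref{eqn degree of a cycle}, $|\gamma_e|=0$, so $\gamma_e\in\overline{\mathrm{Cyc}}$ and $E(\gamma_e)\subset E(M)$. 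Therefore
\[
\bigcup_{e\in E(M)} E(\gamma_e)=E(M).
\]

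\textbf{Step 3: connectedness.} To get $C_M=\{\gamma_e\}_{e \in E(M)}\in\mathrm{Cyc}$, I must show $\bigcup_e \mathrm{Im}(\gamma_e)$ is connected. Take two arrows $e,e'\in E(M)$. Their endpoints are $\sim_M$-related, so Step 1 gives a directed $E(M)$-path joining a vertex of $\gamma_e$ to a vertex of $\gamma_{e'}$. Each arrow on this path lies in its own cycle from Step 2. Consecutive cycles along the path share a vertex, so they chain $\gamma_e$ to $\gamma_{e'}$ inside the union. This gives $E(C_M)=E(M)$.

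\textbf{Step 4: the case $E(M)=\varnothing$.} By Step 1, any $\overline{g}$ with $M(\overline{g})\neq0$ must then be a degree-$0$ combination of loop-space generators, so it is an endomorphism of a single object. Hence $v\sim_M w$ forces $v=w$, and $M$ is supported at one vertex $v$. Here I read the standing assumption ``$v\sim_M w$ for all $v,w$'' as applying to the vertices in the support of $M$, since a vertex with $M(L_v)=0$ is not $\sim_M$-related even to itself. Checking Definition \ref{dfn generators} for $C=v$: (a) holds by the support statement; (b) holds since $M(L_v)\neq 0$ and $M$ is concentrated at degree $0$; (c) holds because $E(M)=\varnothing$; and (d) is vacuous because $E(v)=\varnothing$. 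So $M\in\cC_{(v,0)}$.

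The main obstacle I expect is the bookkeeping in Step 1. One needs that a nonzero value of $M$ on a degree-$0$ morphism really comes from a single monomial, all of whose $\overline{x}$-factors have nonzero image. This relies on $\cW$ being semifree (so degree-$0$ morphisms are sums of monomials in the generators) and on $M$ being a strict dg functor to $\HMod k$. A secondary point is the reading of ``$v\sim_M w$ for all $v,w$'' on vertices where $M$ vanishes, which Step 4 has to handle explicitly.
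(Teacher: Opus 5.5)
Your proposal is correct and follows the paper's proof. For each $e\in E(M)$ you use $\sim_M$ to get a return path, close it into a degree-$0$ cycle $\gamma_e$ with $E(\gamma_e)\subset E(M)$, obtain connectedness of the union from $\sim_M$, and handle $E(M)=\varnothing$ by showing that $M$ is supported on a single vertex. Two of your choices are slightly more careful than the paper's: taking a shortest directed path inside the subquiver spanned by $E(M)$ (instead of the paper's shortest product $\overline{f}$ with $M(\overline{f})\neq 0$) is what guarantees the closed walk is a simple cycle, and reading the standing assumption on the support of $M$ matches the paper's tacit restriction to $|V(Q)|=1$.
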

\begin{proof}
	Let us assume that $E(M) = \varnothing$. 
	Then, it is easy to observe that $|V(Q)| =1$.
	In other words, $E(M)=\varnothing$ means that $v \sim_M w$ if and only if $v =w$. 
	Since we are assuming that $v \sim_M w$ for all $v, w \in V(Q)$, $|V(Q)|=1$. 
	
	Let $v$ denote the unique vertex of $Q$. 
	Then, $v \in V(Q) \subset \mathbb{B}$. 
	To prove the ``moreover'' part, we need to check that $M$ satisfies the conditions (a)--(d) of Definition \ref{dfn generators}. 
	The conditions (a), (b), and (d) hold trivially and (c) is equivalent to that $E(M) = \varnothing$. 
	
	Now, we consider the case of $E(M) \neq \varnothing$. 
	We want to show that there exists $C_M \in \mathrm{Cyc}$ such that $E(M) = E(C_M)$. 
	Or equivalently, there exist $\gamma_1, \dots, \gamma_m \in \overline{\mathrm{Cyc}}$ such that 
	\begin{itemize}
		\item[(i)] $E(M) = \cup_i E(\gamma_i)$, and 
		\item[(ii)] $\cup_i E(\gamma_i)$ is connected as a subset of the base graph of $Q$. (It means that $\{\gamma_1, \dots, \gamma_m\} \in \mathrm{Cyc}$.)
	\end{itemize} 
	
	If (i) holds, then so does (ii): Since $v \sim_M w$ for all $v, w \in V(Q)$, $E(M)$ is connected. 
	Thus, it is enough to prove (i).
	
	Let $e = w \to v \in E(M)$.
	Then, since $v \sim_M w$, there exists $\overline{f} \in \hom_{\cW^{op}}(L_w, L_v)$ such that $M(\overline{f}) \neq 0$. 
	Note that $\overline{f}$ should be written uniquely as a product of generating morphisms. 
	Let us choose $\overline{f}$ having shortest product among all $\overline{f} \in \hom_{\cW^{op}}(L_w, L_v)$ such that $M(\overline{f}) \neq 0$.
	Then, we have a collection of arrows $\{e_1, \dots, e_m\}$ such that $\overline{x}_{e_i}$ appears in $\overline{f}$, and moreover, there exists $\gamma_e \in \overline{\mathrm{Cyc}}$ such that $\{e, e_1, \dots, e_m\} = E(\gamma_e)$. 
	We point out that $E(\gamma_e) \subset E(M)$. 
	It proves (i) and we can choose $C_M := \{ \gamma_e | e \in E(M)\}$.
\end{proof}

Lemma \ref{lem assigned cycle} proves Theorem \ref{thm generation of proper modules} for $M$ satisfying that $E(M) = \varnothing$. 
Thus, we assume that $E(M) \neq \varnothing$ in the rest of Section \ref{subsection proof of Theorem 1}. 
Under the assumption, we prove Theorem \ref{thm generation of proper modules} by an induction on
\[\sum_{v \in V(Q)} \dim_k M(L_v).\]
We recall that $k$ is a fixed coefficient field, so $\dim_k$ is well-defined.

The base step is the case of $\sum_{v \in V(Q)} \dim_k M(L_v) =1$. 
If so, we have $|V(Q)|=1$ from the assumptions. 
Since $E(M) \neq \varnothing$, $C_M$ consists of loops on the unique vertex. 
Moreover, if $e \in E(M)$, then $M(\overline{x}_e)$ is the nonzero linear map between $M(L_v)$ of dimension $1$. 
It implies that $M(\overline{x}_e)$ is an isomorphism. 
Thus, Theorem \ref{thm generation of proper modules} holds for the base step. 

Now, we prove the induction step, so let us assume the induction hypothesis: Let $N$ be a module such that $\sum_{v \in V(Q)} \dim_k N(L_v) < \ell$.
Then, $N$ is generated by the given generators in Theorem \ref{thm generation of proper modules}. 

First of all, if our given $M$ satisfies that $M \in \cC_{(C_M,0)}$, then there is nothing to prove. 
Thus, let us assume that $M \notin \cC_{(C_M,0)}$.
In other words, $M$ does not satisfy at least one of Definition \ref{dfn generators} (a)--(d).

But, the assumptions on $M$ guarantees that (a)--(c) hold for $M$ and $(C_M,0)$. 
Thus, (d) has not to hold.  
Equivalently, there exists an arrow $e = s \to t \in E(M)$ such that 
\begin{gather}
	\label{eqn induction step}
	\sum_{\overline{f} \in \hom_{\cW^{op}}^0(L_s,L_s)} M(\overline{f}) \big(\mathrm{Im} M(\overline{x}_e)\big) \subsetneq M(L_s).
\end{gather}

Our strategy is to construct another module $M_e \in \Prop \cW$ and a natural transformation $T: M_e \to M$, similar to the proofs of Lemmas \ref{lem extra condition 1}, \ref{lem extra condition 2} and \ref{lem extra condition 3}.

\noindent{\bf Construction of $M_e$ (object level):} 
For each $v \in V(Q)$, we set 
\[M_e(L_v) := \sum_{\overline{g} \in \hom_{\cW^{op}}^0(L_s,L_v)} M(\overline{g})\big(\mathrm{Im} M(\overline{x}_e)\big).\]

\noindent{\bf Construction of $M_e$ (morphism level):} 
Note that $M_e(L_v) \subset M(L_v)$ by definition. 
Thus, for $\overline{h} \in \hom_{\cW^{op}}^0(L_v,L_w)$, we construct $M_e(\overline{h})$ as a restriction of $M(L_v)$.
Then, the following shows that the restriction is a self-map on $M_e(L_v)$. 
\[M(\overline{h}) \big(M_e(L_v)\big) = M(\overline{h}) \left(\sum_{\overline{g} \in \hom_{\cW^{op}}^0(L_s,L_v)} M(\overline{g})\big(\mathrm{Im} M(\overline{x}_e)\big)\right) = \sum_{\overline{g} \in \hom_{\cW^{op}}^0(L_s,L_v)} M(\overline{h} \circ \overline{g})\big(\mathrm{Im} M(\overline{x}_e)\big) \subset M_e(L_w).\]

\noindent{\bf Construction of $M_e$ (dg-functor):}
We need to prove that the constructed $M_e$ is a dg-functor.
It is trivial from the object and morphism level constructions. 

\noindent{\bf Construction of a natural transformation $T: M_e \to M$:}
In order to complete the induction step, we construct a natural transformation $T: M_e \to M$ in $\Prop \cW$, i.e., collection of multi-linear maps $T = \{T^d\}_{d \geq 0}$.

For $d =0$, we set $T^0: M_e(L_v) \to M(L_v)$ as the inclusion of $k$-modules for all $v \in V(Q)$. 
For $d \geq 1$, we set $T^d =0$. 
Then, one can easily see that $T$ is closed in $\Prop \cW$. 
This part is the same as that in the proof of Lemma \ref{lem extra condition 1}, so we skip it.

\noindent{\bf The induction step:}
We are ready to complete the induction step. 
We recall that by the definition of $M_e(L_s)$ (here, we recall that $s$ is the starting point of an arrow $e$) and by Equation \eqref{eqn induction step}, 
\[0 \neq \mathrm{Im}M(\overline{x}_e) \subset M_e(L_s) \subsetneq M(L_s).\]
It implies that 
\[0 < \dim_k M_e(L_s), \dim_k \Cone(T)(L_s) < \dim_k M(L_s)\]
and thus
\begin{gather}
	\label{eqn induction hypothesis}
	0 < \sum_{v \in V(Q)} \dim_k M_e(L_v), \sum_{v \in V(Q)} \dim_k \Cone(T)(L_v) < \ell.
\end{gather}

We note that by the construction, $M_e$ and $\Cone(T)$ are modules concentrated at degree $0$. 
By applying Lemma \ref{lem extra condition 3}, $M_e(T)$ and $\Cone(T)$ are divided into simpler modules, i.e., modules satisfying the assumptions of Section \ref{subsection proof of Theorem 1}, and also satisfying the induction hypothesis (because of Equation \eqref{eqn induction hypothesis}).
It means that the generators generate $M_e$ and $\Cone(T)$. 
Since $M_e$ and $\Cone(T)$ generate $M$, the induction step holds. \qed

\begin{rmk}
	\label{rmk with potential}
	Theorem \ref{thm generation of proper modules} and Corollary \ref{cor generation of ginzburg dg algebra} deal with Ginzburg dg algebras {\em without} potential, or equivalently, the zero potential. 
	The reason why we do {\em not} consider more general case, i.e., Ginzburg algebras associated to quivers {\em with} potentials, is that we only have a geometric model for Ginzburg algebras without potential. 
	However, we expect that the main idea could be generalized for a general case, i.e., a quiver $Q$ with arbitrary potential $W$. 
	 
	Before explaining our expectation, we recall that if a potential $W$ is generic, in other words, the corresponding Ginzburg dg algebra $\Gamma_n(Q,W)$ is Jacobi-finite, then there is a generation result for $\Prop \Gamma_n(Q,W)$. 
	Thus, for the generation result, the problem is the infinite dimensional part of $H^0(\Gamma_n(Q,W))$. 
	In the proof of Theorem \ref{thm generation of proper modules}, we handle the infinite dimensional part by using {\em cycles} of $Q$. 
	Similarly, if we have an arbitrary potential $W$ inducing infinite dimensional $H^0(\Gamma_n(Q,W))$, we expect that ``some'' cycles of $Q$ will correspond to the infinite dimensional part of the zeroth cohomology.
	
	If our hope works, then the key of proving Theorem \ref{thm generation of proper modules} for an arbitrary $W$ is to classify ``some" cycles of $Q$ with respect to the given $W$. 
\end{rmk}
\qed

\section{The immersed compact Fukaya category of plumbings}
\label{section the compact Fukaya category of plumbings}

The purpose of this section is to prove Theorem~\ref{thm immersed module
	correspondence} and its corollary, which together establish equivalences
\[
\cF(P;d_e) \;\simeq\; \Prop\,\cW(P;d_e) \;\simeq\; \mu\mathrm{Sh}(L)
\]
between the immersed compact Fukaya category of a plumbing space $P$, the
category of proper modules over its wrapped Fukaya category, and the category
of microlocal sheaves on the skeleton $L = \bigcup_{v\in V(Q)} M_v$ of $P$.
This generalises the Nadler--Zaslow theorem \cite{nad-zas09} to plumbing
spaces and is dual to the Lagrangian--sheaf correspondence of \cite{gps3}.
The first equivalence is proved by showing that each generating proper module
of Theorem~\ref{thm generation of proper modules} is realised as the Yoneda
image of an object of $\cF(P;d_e)$.
The key symplectic input is the generation theorem of \cite{Jeong-Karabas-Lee26}:
any exact Lagrangian immersion equipped with a bounding cochain supported on
its positive-action double points is generated by the Lagrangian cocores of
the ambient Weinstein manifold.

In Section~\ref{subsection recalled results immersed} we recall the relevant
results from \cite{Jeong-Karabas-Lee26} and use them to define the immersed
compact Fukaya category $\cF(W)$ and establish its basic properties.  In
Section~\ref{subsection proper modules as immersed Lagrangians in plumbings}
we specialise to plumbing spaces, constructing the relevant objects of
$\cF(P;d_e)$ and proving Theorem~\ref{thm immersed module correspondence}
and Corollary~\ref{cor microlocal sheaves}.

\subsection{The immersed compact Fukaya category}
\label{subsection recalled results immersed}
Let $(W, \omega = d\theta)$ be a Weinstein manifold of finite type. An \emph{exact Lagrangian immersion (with cylindrical end)} is a proper
immersion $i\colon L \hookrightarrow W$ admitting a smooth function
$f\colon L \to \R$ with $i^*\theta = df$, whose self-intersection locus
consists of finitely many transverse double points, and whose image is
tangent to the Liouville vector field outside a compact subset of $W$. By
abuse of notation we write $L$ for the image $i(L)$ and sometimes call it
an immersed Lagrangian.

Each double point $x$ with preimage $i^{-1}(x) = \{x^+, x^-\}$ contributes
two generators to the wrapped Floer cochain complex $\CW^*(L,L)$, denoted
$x_+:=(x^+,x^-)$ and $x_-:=(x^-,x^+)$, with actions
\[
a(x_+) = f(x^+) - f(x^-), \qquad a(x_-) = f(x^-) - f(x^+).
\]
By a small perturbation of $f$ (or $L$ if necessary) one can always arrange $f(x^+) > f(x^-)$,
so that $a(x_+) > 0$ and $a(x_-) < 0$. We call $x_+$ a
\emph{positive-action double-point generator}.  A \emph{bounding cochain}
for $L$ is a degree-$1$ element $b \in \CW^1(L,L)$ satisfying the
Maurer--Cartan equation $\sum_{k \geq 0} m_k(b,\ldots,b) = 0$.

\begin{dfn}
	We say a bounding cochain $b$ is \emph{supported on the positive-action
		double points} if $b$ is a linear combination of positive-action
	double-point generators.
\end{dfn}

By \cite{Jeong-Karabas-Lee26}, this is equivalent to equipping
the Legendrian lift $L^+ := \{(i(p),-f(p))\mid p\in L\}\subset W\times\R$ with an augmentation of its Chekanov--Eliashberg
algebra, which is precisely the condition used in \cite{cdrgg17} to define
Floer theory for immersed Lagrangians.

It is shown in \cite{Jeong-Karabas-Lee26} that exact Lagrangian immersions
equipped with bounding cochains supported on positive-action double points
form a well-defined $A_\infty$-category whose morphism spaces are computed
via Hamiltonian perturbed Floer theory.  We recall that \cite{cdrgg17}
defined Floer-theoretic operations for immersed Lagrangians via Hamiltonian
perturbations, but did not verify the coherence conditions required to
assemble these into a well-defined $A_\infty$-category.  This gap is filled
in \cite{Jeong-Karabas-Lee26} by working within Gao's immersed wrapped Fukaya
category $\cW^{\mathrm{im}}(W)$ \cite{gao17}, which is well-defined by construction via pearly tree
trajectories, and showing that on the objects of interest Gao's pearly-tree
$A_\infty$-operations reduce to holomorphic curve counts, thereby agreeing
with the Hamiltonian perturbation approach of \cite{cdrgg17}. Moreover, the
following generation result is established.

\begin{thm}[\cite{Jeong-Karabas-Lee26}]
	\label{thm immersed generation recalled}
	Let $L$ be an exact Lagrangian immersion in $W$ equipped with a bounding
	cochain $b$ supported on its positive-action double points.  Then $(L,b)$
	is generated by the Lagrangian cocores of $W$ in $\cW^{\mathrm{im}}(W)$.
	In particular, $(L,b)$ is an admissible object of the (embedded) wrapped Fukaya
	category $\cW(W)$ in the sense of \cite{cdrgg17} or \cite{gps1}.
\end{thm}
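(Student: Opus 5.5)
This statement is recalled from \cite{Jeong-Karabas-Lee26}, so here we only outline how its proof would go. The plan is to reduce the immersed statement to the embedded generation theorem of \cite{cdrgg17} (equivalently \cite{gps2}) by passing to Legendrian lifts. The approach has three steps.

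\textbf{Step 1: identifying the object.} Lift $(L,b)$ to the Legendrian $L^+\subset W\times\R$. Every Reeb chord of $L^+$ corresponds to a double point of $L$, and its action is $a(x_\pm)$. Under this dictionary, the Maurer--Cartan equation for a bounding cochain supported on positive-action generators becomes the statement that $b$ defines an augmentation $\varepsilon_b$ of the Chekanov--Eliashberg algebra of $L^+$. The point is that only positive-action chords can appear as outputs of holomorphic disks with one positive puncture, so the $m_k(b,\dots,b)$ counts coincide with the components of $\varepsilon_b\circ\partial$. This identifies $(L,b)$ with the object used in \cite[Section 6.2]{cdrgg17}.

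\textbf{Step 2: comparing the two Floer theories.} Show that Gao's pearly-tree $A_\infty$-operations in $\cW^{\mathrm{im}}(W)$, evaluated on the objects $(L,b)$ and on the cocores, agree with the Hamiltonian-perturbed holomorphic curve counts. The argument proceeds in two parts.
\begin{enumerate}
\item Choose an adapted Morse function and almost complex structure, and run an adiabatic (or degeneration) argument in which the pearls collapse to holomorphic polygons with Hamiltonian perturbation.
\item Use action filtrations to control breaking at the double points. Since $b$ has strictly positive action, and $W$ is exact and of finite type, the sums $\sum_k m_k(b,\dots,b)$ and the deformed operations $m_k^b$ are finite. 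This gives a filtered quasi-isomorphism between the deformed complexes in the two models.
\end{enumerate}

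\textbf{Step 3: generation.} Apply the Abouzaid-type argument of \cite{cdrgg17}. Displace $L$ off the skeleton by the Liouville flow, then wrap. The surgery and cobordism exact sequence of \cite{cdrgg17}, run with augmentations in the twisted setting, expresses $(L,b)$ as a twisted complex of cocores. Concretely, this means checking that the cocores' Yoneda modules detect $(L,b)$, i.e. that the unit lies in the image of the open--closed type map restricted to cocores, with $\varepsilon_b$ inserted at every chord puncture. The ``in particular'' clause then follows because the cocores are objects of $\cW(W)$ and $\cW(W)$ is taken pretriangulated.

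The main obstacle will be Step 2, the agreement of the pearly and Hamiltonian models. This requires uniform transversality and gluing near the immersed points, in particular ruling out the bubbling of constant-at-double-point configurations with negative-action corners. I would try first to use the positivity of the support of $b$ together with an energy estimate: any disk with a negative-action corner has energy at least the gap $\min_x a(x_+)>0$. A stretching parameter smaller than this gap should then exclude such configurations.
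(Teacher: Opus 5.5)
Your outline follows the route the paper attributes to \cite{Jeong-Karabas-Lee26} (the paper itself only cites this result): identify bounding cochains supported on positive-action double points with augmentations of $L^+$, show that on these objects Gao's pearly-tree operations reduce to the Hamiltonian-perturbed holomorphic counts of \cite{cdrgg17}, and then extend the \cite{cdrgg17} generation argument to immersed Lagrangians with augmentations. Two of your glosses need correcting, however: in Step~3 the twisted complex must come from the \cite{cdrgg17} cobordism/surgery argument itself (transversality to the cores plus the Liouville flow, not displacing $L$ off the skeleton, which is impossible for a closed exact $L$), because the ``unit in the image of the open--closed map'' criterion you give as its concrete content is Abouzaid's split-generation criterion and would only place $(L,b)$ in $\Perf\,\cW(W)$ rather than in the pretriangulated $\cW(W)$; and in Step~2 your energy-gap estimate cannot exclude constant polygons at a double point, which have zero energy---these are handled by corner combinatorics instead (their corners alternate between $x_+$ and $x_-$ types, so none contributes to $\sum_k m_k(b,\ldots,b)$, since $b$ has no $x_-$ component).
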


We now define the immersed compact Fukaya category of $W$ that we work with
throughout this section.

\begin{dfn}
	\label{dfn compact Fukaya category}
	The \emph{immersed compact Fukaya category} $\cF(W)$ is the full
	$A_\infty$-subcategory of $\cW^{\mathrm{im}}(W)$ whose objects are
	pairs $(L,b)$ where $L$ is a compact exact Lagrangian immersion and $b$
	is a bounding cochain supported on the positive-action double points of
	$L$.
\end{dfn}

Note that embedded compact exact Lagrangians are objects of $\cF(W)$, taking
$b=0$ as the bounding cochain.

\begin{prop}
	\label{prop compact Fukaya subcategories}
	$\cF(W)$ is a full $A_\infty$-subcategory of $\cW(W)$. Moreover, the
	Yoneda functor restricts to a fully faithful functor
	\[
	\cY\colon \cF(W) \longrightarrow \Prop\,\cW(W).
	\]
\end{prop}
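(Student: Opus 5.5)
The proof has two parts. First, $\cF(W)$ sits inside $\cW(W)$ as a full subcategory. Second, Yoneda images of compact objects are proper, and the Yoneda functor is fully faithful on them.

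For the first part, I would invoke Theorem \ref{thm immersed generation recalled}. Every object $(L,b)$ of $\cF(W)$ is an exact Lagrangian immersion with a bounding cochain supported on positive-action double points. Hence it is generated by the Lagrangian cocores in $\cW^{\mathrm{im}}(W)$. The cocores generate $\cW(W)$ by \cite{cdrgg17, gps2}, and $\cW(W)$ is by definition pretriangulated, so each such $(L,b)$ is quasi-isomorphic to a twisted complex of cocores. This gives an admissible object of $\cW(W)$. Morphism spaces in $\cF(W)$ are computed in $\cW^{\mathrm{im}}(W)$ by the same Floer complexes, since \cite{Jeong-Karabas-Lee26} identifies Gao's pearly operations with the Hamiltonian-perturbed counts. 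The inclusion is therefore full, and $\cF(W)$ is realized as a full $A_\infty$-subcategory of $\cW(W)$ (up to quasi-equivalence of the essential image).

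For the second part, the restriction of $\cY\colon \cW(W)\to \Mod\,\cW(W)$ to $\cF(W)$ is fully faithful by the Yoneda lemma for $A_\infty$-categories \cite{sei08}. It remains to show that $\cY(L,b)$ is proper, i.e., that for each object $K$ of $\cW(W)$ (equivalently, each cocore), $\CW^*(K,(L,b))$ has finite-dimensional total cohomology. Since $L$ is compact, wrapping $K$ at infinity never produces new intersections with $L$ once $K$ has been pushed out of a compact set containing $L$. So $\CW^*(K,L)$ is computed by the Floer complex of $K$ and a small Hamiltonian perturbation of $L$, which has finitely many generators. The bounding cochain $b$ only deforms the differential and does not change the generators, so the complex stays finite-dimensional. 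Since properness is closed under cones and shifts, checking it on generators suffices.

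The main obstacle is the identification in the first part: making sure that the morphism spaces of $(L,b)$ computed in $\cW^{\mathrm{im}}(W)$ coincide with those of its image in $\cW(W)$, so that the inclusion is genuinely full. I would rely on the comparison in \cite{Jeong-Karabas-Lee26} that $\cW(W)$ embeds fully faithfully into $\cW^{\mathrm{im}}(W)$, with cocores as the common generators. The finiteness argument for immersed $L$ also needs care at the double points, but these contribute only finitely many chords of fixed action, so I expect it to be routine.
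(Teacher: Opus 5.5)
Your proposal is correct and follows essentially the same route as the paper. Both arguments get admissibility of $(L,b)$ in $\cW(W)$ from Theorem~\ref{thm immersed generation recalled}, full faithfulness of $\cY$ from the Yoneda lemma, and properness from compactness of $L$; the only difference is that you justify the finiteness step (wrapping at infinity creates no new intersections with $L$, and it suffices to check on cocores since properness is closed under cones), which the paper simply asserts.
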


\begin{proof}
	The first statement follows from Theorem~\ref{thm immersed generation
		recalled}: every object $(L,b)$ of $\cF(W)$ is an admissible object of
	$\cW(W)$, so $\cF(W)$ is a full $A_\infty$-subcategory of $\cW(W)$.
	For the second, the Yoneda functor is fully faithful on $\cW(W)$ by the
	Yoneda lemma, so it remains to check that $\cY(L,b)$ is a proper module.
	Since $L$ is compact, the morphism space $\hom_{\cW(W)}(K,L)$ is
	finite-dimensional for every object $K$, hence $\cY(L,b)$ takes values
	in $\Perf\, k$ and is therefore a proper $\cW(W)$-module.
\end{proof}

\subsection{Proper modules as immersed compact Lagrangians in plumbings}
\label{subsection proper modules as immersed Lagrangians in plumbings}

We now specialise to plumbing spaces and prove
Theorem~\ref{thm immersed module correspondence}.  Fix plumbing data
$(Q,M,\s)$ of dimension $2n\geq 6$ and a grading structure
$\{d_e\}_{e\in E(Q)}$ satisfying the non-positivity
condition~\eqref{eqn non-positively grading condition}.  Write $P =
P(Q,M,\s)$ for the associated plumbing space and $\cW = \cW(P;d_e)$ for its wrapped Fukaya category
throughout.  Here $\cF(P;d_e)$ denotes the immersed compact Fukaya category
of $P$ in the sense of Definition~\ref{dfn compact Fukaya category}, where
$\{d_e\}$ plays the same role as in $\cW(P;d_e)$: it determines the gradings
of intersection points between Lagrangians, as described in
Section~\ref{subsubsection grading structure}.

We begin by constructing the objects of $\cF(P;d_e)$ that appear in the
theorem.  Let $I\subset V(Q)$ be a non-empty subset of vertices and for each
$v\in I$ let $\rho_v$ be a local system on $M_v$.  Recall from
Definition~\ref{definition plumbing space} that the core $M_v\subset P$ is
the image of the zero section of $T^*M_v$ under the inclusion $\iota_v\colon
T^*M_v\to P$.  We write $M_v^{\rho_v}$ for $M_v$ equipped with $\rho_v$,
and define the immersed Lagrangian
\[
L \;:=\; \bigcup_{v\in I} M_v^{\rho_v} \;\subset\; P.
\]
We fix a grading on each $M_v$ so that the unique intersection point of
$M_v$ with the cocore $L_v$ has degree $0$, i.e.\ $\hom^0(M_v,L_v)\neq 0$.
For each arrow
$e\colon v\to w\in E(Q)$ with $v,w\in I$, the cores $M_v$ and $M_w$ meet
transversely at a single plumbing point $p_e$. By abuse of
notation, the same point $p_e$ gives two morphisms depending on the direction:
\[
p_e\in\hom^{1-d_e}_\cW(M_w,M_v)
\quad\text{and}\quad
q_e\in\hom^{n-(1-d_e)}_\cW(M_v,M_w),
\]
corresponding to the two ways of viewing the intersection point as a morphism.
The degree of $p_e$ is computed as follows: under
the Yoneda embedding, $p_e$ corresponds to a degree-$k$ module homomorphism
from $\cY(M_w)$ to $\cY(M_v)$, whose nonzero component is by
\cite[Section~(1j)]{sei08} a degree $k-1$ map
\[
\cY(M_w)(L_w)\otimes\hom^*_\cW(L_v,L_w)\longrightarrow\cY(M_v)(L_v).
\]
Since $\cY(M_j)(L_j)=\hom^0(L_j,M_j)$ is concentrated in degree zero for $j\in\{v,w\}$, and
the only nonzero contribution to the map from $\hom^*_\cW(L_v,L_w)$ is from
$x_e\in\hom^{d_e}_\cW(L_v,L_w)$, this reduces to
\[
\hom^0(L_w,M_w)\otimes k\{x_e\}\longrightarrow\hom^0(L_v,M_v),
\]
which forces $k-1=-d_e$, hence $k=1-d_e$. 

In particular, when $d_e=0$, $p_e$ is a degree-$1$ morphism in
$\hom^*_\cW(M_w,M_v)$ and $q_e$ is a degree-$(n-1)\neq 1$ morphism in
$\hom^*_\cW(M_v,M_w)$.
As elements of $\CW^*(L,L)$, $p_e$ and $q_e$ are the two generators
contributed by the double point $p_e$ of $L$ in the sense of
Section~\ref{subsection recalled results immersed}.  Since each $M_v$ is a
zero section, $\theta|_{M_v}=0$ for each $v\in I$, so the potential $f$ of $L$ can be taken
to be globally zero.  By a small perturbation of $L$ and $f$ near each plumbing point, we arrange that $p_e$ has positive
action and $q_e$ has negative action for all $e$ with $d_e=0$.

We now define the bounding cochain.  When $d_e=0$ and $v,w\in I$, the
morphism space
\[
\hom^1_\cW\!\left(M_w^{\rho_w}, M_v^{\rho_v}\right)
\;\simeq\;
\Hom(R_w, R_v)\otimes \hom^1_\cW(M_w, M_v)
\]
contains $\Hom(R_w, R_v)\otimes k\{p_e\} \cong \Hom(R_w,R_v)$ as a
subspace, where $R_v$ and $R_w$ denote the stalks of $\rho_v$ and $\rho_w$.
We fix a choice of $b_e\in\Hom(R_w,R_v)\subset
\hom^1_\cW(M_w^{\rho_w},M_v^{\rho_v})$ for each $e\colon v\to w$ with $d_e=0$ and
$v,w\in I$, and set $b_e=0$ otherwise. The sum
\[
b \;:=\; \sum_{e\in E(Q)} b_e \;\in\; \hom^1(L,L)
\]
is a bounding cochain for $L$ supported on the positive-action double points,
since each $b_e$ is supported on the positive-action double-point generator $p_e$.
It remains to verify the Maurer--Cartan equation
\[\sum_{k\geq 0} m_k(b,\ldots,b) = 0.\]
The $k=0$ term vanishes since $L$ is exact.  For $k\geq 1$, suppose for
contradiction that $m_k(b_{e_1},\ldots,b_{e_k})\neq 0$ for some arrows
$e_1,\ldots,e_k\in E(Q)$.  Then there exists a
holomorphic disk $u$ with boundary on $L$ and input punctures at the
positive-action double-point generators $p_{e_1},\ldots,p_{e_k}$.  The boundary $\partial u$ is a closed loop on $L = \bigcup_{v\in I} M_v$
passing through the plumbing points $p_{e_1},\ldots,p_{e_k}$, and since $u$
is a disk, $[\partial u] = 0$ in $H_1(P;\Z)$.  Using the splitting
$H_1(P;\Z) = H_1(Q;\Z)\oplus\bigoplus_{v\in V(Q)} H_1(M_v;\Z)$, the
projection of $[\partial u]$ onto the $H_1(Q;\Z)$ summand is the closed
edge-path passing through the edges $e_1,\ldots,e_k$, which is nonzero in $H_1(Q;\Z)$.  This contradicts $[\partial u]=0$.  Hence all
$m_k(b,\ldots,b)$ with $k\geq 1$ vanish.
\medskip

We can now state and prove the main theorem of this section.

\begin{thm}
	\label{thm immersed module correspondence}
	Let $(Q,M,\s)$ be plumbing data of dimension $2n\geq 6$ equipped with a
	grading structure $\{d_e\}$ satisfying the non-positivity
	condition~\eqref{eqn non-positively grading condition}.  For any proper module
	$E\in\Prop\,\cW(P;d_e)$ concentrated at degree zero, there exist
	$I\subset V(Q)$, local systems $\{\rho_v\}_{v\in I}$, and a bounding
	cochain $b$ of the form constructed above such that the immersed
	Lagrangian $L = \bigcup_{v\in I} M_v^{\rho_v}$ equipped with $b$ is an
	object of $\cF(P;d_e)$ satisfying
	\[
	\cY(L,b) \;\simeq\; E \quad\text{in }\Prop\,\cW(P;d_e).
	\]
	Consequently, the Yoneda functor restricts to an equivalence
	\[
	\cF(P;d_e)\;\simeq\;\Prop\,\cW(P;d_e).
	\]
\end{thm}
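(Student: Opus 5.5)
The plan is to compute the Yoneda module of $(L,b)$ explicitly on the generators of $\cW$ given in Theorem~\ref{thm wrapped Fukaya category}, and then to match it against an arbitrary degree-zero proper module $E$.

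\textbf{Step 1: evaluation on the cocores.} Each core $M_v$ meets the cocore $L_v$ in exactly one point and is disjoint from the other cocores. With the grading we fixed, this gives
\[
\cY(M_v^{\rho_v})(L_v)=\hom^0(L_v,M_v)\otimes R_v\cong R_v
\]
in degree $0$, and $\cY(M_v^{\rho_v})(L_w)=0$ for $w\neq v$. Hence $\cY(L,b)(L_v)=R_v$ for $v\in I$, and $\cY(L,b)(L_v)=0$ for $v\notin I$.

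\textbf{Step 2: action of the generating morphisms.}
\begin{itemize}
\item For the loop-space generators $\overline{\alpha}_v^i$ in degree $0$, the action is given by the holonomy of $\rho_v$. This is the Nadler--Zaslow / Abouzaid computation for $T^*M_v$, transported through the sectorial inclusion.
\item All negative-degree generators act by zero, for degree reasons. This covers $\overline{y}_e$, $\overline{h}_v$, and $\overline{x}_e$ with $d_e<0$.
\item For $d_e=0$ and $e\colon v\to w$, the generator $\overline{x}_e$ acts through the $m_2(p_e,\cdot)$-type triangle at the plumbing point. After twisting by $b$, this action is exactly $b_e\in\Hom(R_w,R_v)$.
\end{itemize}

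To justify the last two points I would localise to the plumbing sector $\Pi_n$ via Remark~\ref{rmk:plumbing-sector-disks} and Remark~\ref{remark specific representation}(2). There, $M_v\cup M_w$ restricts to $D^n_s\cup D^n_t$. Without $b$, this gives the module $E_{st}$ with $x,y\mapsto 0$. Deforming by $b_e$ along the positive double point turns the $x$-action into $b_e$. The higher terms $m_k(b,\dots)$ vanish by the same $H_1$ argument used for the Maurer--Cartan equation.

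\textbf{Step 3: matching an arbitrary $E$.} Given $E$ concentrated in degree $0$, we reduce to a strict dg module, which we may do by Remark~\ref{rmk HMod} and semifreeness of $\cW$. We then set:
\begin{itemize}
\item $I=\{v: E(L_v)\neq 0\}$;
\item $\rho_v$ the local system on $M_v$ whose monodromy is the $H^0(C_{-*}(\Omega M_v))=k[\pi_1 M_v]$-action on $E(L_v)$ given by the $\overline{\alpha}$'s and the relation $d\overline{h}_v$;
\item $b_e=E(\overline{x}_e)$ for $d_e=0$.
\end{itemize}

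Compatibility with $d h_v$ must be checked. It holds because all other terms have negative degree and therefore act by zero on a degree-$0$ module. I expect the identification of the $\eta_v(z)$ term with the boundary relation in $\pi_1$ to be routine. The dg-functor data of $E$ and of $\cY(L,b)$ then agree on generators, so they agree as modules; no higher $A_\infty$ components can appear, since all relevant degrees vanish.

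\textbf{Step 4: the equivalence.} Full faithfulness of $\cY\colon\cF\to\Prop\,\cW$ is Proposition~\ref{prop compact Fukaya subcategories}. For essential surjectivity, every proper module is built by cones and shifts from degree-$0$ modules (Lemma~\ref{lem extra condition 1}), and these are hit by Step 3. The essential image of a fully faithful functor from a pretriangulated category is closed under cones, so every proper module is in the image. Here $\cF$ is pretriangulated provided it is taken as its twisted-complex closure inside $\cW^{\mathrm{im}}$, or alternatively provided cones of objects $(L,b)$ are realised by immersed surgeries. I would state that $\cF$ is taken closed under cones in this sense.

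\textbf{Main obstacle.} I expect Step 2 to be the hardest part: proving rigorously that the Floer module of the immersed union with bounding cochain $b$ has $\overline{x}_e$ acting by exactly $b_e$, with the correct sign, and that no other disks contribute. I would first try to prove it in the local model $\Pi_n$ and then glue via the homotopy colimit description.
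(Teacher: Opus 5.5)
Your Steps 1--3 match the paper's Steps 1--2:
\begin{itemize}
\item the same $I$;
\item the same $\rho_v$, read off from the degree-zero loop-space action;
\item $b_e=E(\overline{x}_e)$ on degree-zero arrows;
\item vanishing of $y_e$, $h_v$ and all higher compositions for degree reasons;
\item localisation to the plumbing sector of $e$ of the disks that compute the $x_e$-action.
\end{itemize}
For that last point, the paper uses the $H_1(Q;\Z)$-splitting argument and then absorbs the universal local count $\lambda$ by rescaling $b_e$. Your local-model picture in $\Pi_n$ is close in spirit to the alternative argument of Section~\ref{section an alternative proof}.

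The genuine difference is in the ``consequently'' part. The paper deduces essential surjectivity from the full generation result, Theorem~\ref{thm generation of proper modules}, and realises only the modules in $\bigcup_{C}\cC_{(C,0)}$. You observe that the first half of the statement already realises \emph{every} proper module concentrated in degree zero. So the truncation tower of Lemma~\ref{lem extra condition 1} suffices, together with shifts and closure of the essential image under cones.

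What your route buys: it is shorter, and it shows that the cycle combinatorics of $\mathbb{B}$ and condition (d) are not needed for $\cF(P;d_e)\simeq\Prop\,\cW(P;d_e)$.

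What the paper's route buys: a sharper statement, namely that the explicit family indexed by $\mathbb{B}$ (connected unions of cores along degree-zero cycles) already generates. That is the content of Theorem~\ref{thm compact Fukaya}.

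You are also right to make explicit that $\cF$ must be taken closed under cones, i.e.\ as its twisted-complex closure, for the equivalence to make sense. The paper uses this implicitly in both of its arguments.

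One small correction. For $n\ge 3$, $h_v$ and $\eta_v(z)$ have degrees $1-n$ and $2-n$. They therefore impose nothing in degree zero, and there is no ``boundary relation in $\pi_1$'' to identify. The passage to a $\pi_1(M_v)$-representation comes from two facts, exactly as in the paper:
\begin{itemize}
\item $H^0(C_{-*}(\Omega(M_v\setminus\mathrm{pt})))=k[\pi_1(M_v\setminus\mathrm{pt})]$;
\item $\pi_1(M_v\setminus\mathrm{pt})\cong\pi_1(M_v)$, by van Kampen, since the removed point has codimension $n\ge 3$.
\end{itemize}
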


\begin{proof}
	\noindent\textit{Step 1: Constructing $(L,b)$ from $E$.}
	Since $\cW(P;d_e)$ is generated by the cocores $\{L_v\}_{v\in V(Q)}$, the
	module $E$ is determined by the values $E(L_v)$, $E(x_e)$, $E(y_e)$, and
	higher compositions.  Set
	\[
	I := \{v\in V(Q)\mid E(L_v)\not\simeq 0\}.
	\]
	For each $v\in I$, define the local system $\rho_v$ on $M_v$ as follows.
	The stalk of $\rho_v$ is $E(L_v)$.  By Theorem~\ref{thm wrapped Fukaya
		category}, the endomorphism algebra of $L_v$ in $\cW(P;d_e)$ contains
	$C_{-*}(\Omega(M_v\setminus\{\mathrm{pt}\}))$ as a subalgebra.  Restricting
	$E$ to this subalgebra gives a map
	\[
	C_{-*}(\Omega(M_v\setminus\{\mathrm{pt}\}))\longrightarrow
	\hom^*(E(L_v),E(L_v)).
	\]
	Since $E$ is concentrated at degree zero, all components of nonzero degree
	are sent to zero.  Hence $C_{-1}$ maps to zero, which means $C_0$ elements
	are sent to closed elements, and the map factors through
	\[
	H_0(\Omega(M_v\setminus\{\mathrm{pt}\}))
	= \pi_1(M_v\setminus\{\mathrm{pt}\})
	\longrightarrow \Hom^0(E(L_v),E(L_v)).
	\]
	By van Kampen's theorem, $\pi_1(M_v\setminus\{\mathrm{pt}\})\cong\pi_1(M_v)$
	since $n\geq 3$, so this defines a representation of $\pi_1(M_v)$ on
	$E(L_v)$, i.e.\ a local system $\rho_v$ on $M_v$ with stalk $E(L_v)$.
	Define $L := \bigcup_{v\in I} M_v^{\rho_v}$.  For each $e\colon v\to w\in
	E(Q)$ with $d_e=0$ and $v,w\in I$, set
	\[
	b_e := E(x_e)\;\in\;\hom^1\!\left(M_w^{\rho_w}, M_v^{\rho_v}\right),
	\]
	and $b_e=0$ otherwise.  Put $b:=\sum_{e\in E(Q)} b_e$.  By the
	Maurer--Cartan verification above, $(L,b)$ is an object of $\cF(P;d_e)$.
	
	\medskip
	\noindent\textit{Step 2: Verifying $\cY(L,b)\simeq E$.}
	By Theorem~\ref{thm wrapped Fukaya category}, $\cW(P;d_e)$ is generated by
	the cocores $L_v$, the loop algebra generators $C_{-*}(\Omega(M_v\setminus
	\{\mathrm{pt}\}))$, and the morphisms $x_e$, $y_e$, and their higher
	compositions.  It suffices to check agreement on each of these.
	
	\medskip
	\noindent$\cY(L,b)(L_v)$: Since $L_v$ intersects only $M_v$ among the
	cores, with a unique intersection point of degree $0$,
	\begin{align*}
		\cY(L,b)(L_v) = \hom^*(L_v, L)
		\simeq \hom^*(L_v, M_v^{\rho_v})
		\simeq \hom^*(k,E(L_v))\otimes\hom^*(L_v, M_v)
		&\simeq E(L_v)\otimes\hom^*(L_v, M_v)\\
		&\simeq E(L_v),
	\end{align*}
	where the stalk of $\rho_v$ is $E(L_v)$ by construction.
	
	The loop algebra $C_{-*}(\Omega(M_v\setminus\{\mathrm{pt}\}))$ acts on
	$\cY(L,b)(L_v)\simeq E(L_v)$ via the local system $\rho_v$, which was
	constructed precisely so that this action agrees with that of $E$.
	
	\medskip
	\noindent $\cY(L,b)(x_e)$: For $d_e\neq 0$, both $\cY(L,b)(x_e)$ and
	$E(x_e)$ vanish for degree reasons since $E$ is concentrated at degree zero.
	For $d_e=0$,  the Yoneda module evaluated on
	$x_e\in\hom^0(L_v,L_w)$ is given by the $A_\infty$ composition map
	\[
	\hom^*(L_w, L)\xrightarrow{m_2(-\otimes x_e)}\hom^*(L_v, L),
	\]
	which counts rigid holomorphic disks with boundary on $L_v\cup L_w\cup L$
	and corners at $x_e$, intersections of $L_w$ with $L$, and intersections of
	$L_v$ with $L$. Since $L$ intersects $L_w$ only at $M_w$ and $L_v$ only
	at $M_v$, this reduces to counting disks with boundary on $L_v\cup L_w\cup
	M_w\cup M_v$.  The boundary of such a disk must pass through some plumbing points of $L$.
	Since $b$ is supported only on the positive-action generators $p_e$ and not
	on the negative-action generators $q_e$, the boundary can traverse
	each plumbing point only in the positive-action direction and cannot return.  Since $x_e$ is a corner of the disk within the plumbing sector corresponding to $p_e$, any traversal of a plumbing point other than
	$p_e$ would contribute a
	nonzero element to the $H_1(Q;\Z)$ summand of $H_1(P;\Z)$, contradicting
	null-homologousness of the boundary by the same splitting argument as in the
	Maurer--Cartan verification above.  Therefore the boundary passes through $p_e$ exactly once, and the disk is
	contained in the plumbing sector $\Pi_n$ corresponding to $e$.  Since all plumbing sectors are the same standard sector $\Pi_n\simeq\C^n$,
	the count of such holomorphic disks is a universal constant $\lambda$,
	independent of $e$, and it is clearly nonzero (in fact, $\lambda=1$).
	Redefining $b_e\mapsto b_e/\lambda$, each of the $\lambda$ disks
	contributes $b_e/\lambda$ via the bounding cochain value at the corner $p_e$,
	giving
	\[
	\cY(L,b)(x_e) = \lambda \cdot \frac{b_e}{\lambda} = b_e = E(x_e).
	\]

	\medskip
	\noindent$\cY(L,b)(y_e)$: Since $|y_e| = 2-n-d_e < 0$ strictly by
	condition~\eqref{eqn non-positively grading condition}, both
	$\cY(L,b)(y_e)$ and $E(y_e)$ vanish as $E$ is concentrated at degree zero.
	
	\medskip
	\noindent Higher compositions: Since all generators of $\cW(P;d_e)$ have
	nonpositive degree, any higher composition produces a morphism of negative
	degree, so both $\cY(L,b)$ and $E$ evaluate to zero since both are
	concentrated at degree zero.
	
	\medskip
	Hence $\cY(L,b)\simeq E$.
	
	\medskip
	\noindent\textit{Equivalence.}
	By Theorem~\ref{thm generation of proper modules}, every
	$E\in\Prop\,\cW(P;d_e)$ is generated by the modules in
	$\bigcup_{C\in\mathbb{B}}\cC_{(C,0)}$.  Step~2 shows each such module is
	the Yoneda image of an object of $\cF(P;d_e)$.  Since the Yoneda functor
	is fully faithful and maps $\cF(P;d_e)$ into $\Prop\,\cW(P;d_e)$ by
	Proposition~\ref{prop compact Fukaya subcategories}, the induced functor
	$\cY\colon\cF(P;d_e)\to\Prop\,\cW(P;d_e)$ is an equivalence.
\end{proof}

Recall that by \cite{nad16}, the category of (traditional) microlocal sheaves on the
skeleton $\mathbb L$ of a Weinstein manifold of finite type $W$ is equivalent to
the category of proper modules over wrapped microlocal sheaves on $\mathbb L$,
\[
\mu\mathrm{Sh}(\mathbb L) \;\simeq\; \Prop\,\mu\mathrm{Sh}^w(\mathbb L),
\]
and by \cite{gps3} the category of wrapped microlocal sheaves on $\mathbb L$ is equivalent to
the wrapped Fukaya category of $W$, giving
\[
\mu\mathrm{Sh}(\mathbb L) \;\simeq\; \Prop\,\cW(W).
\]
Combined with Theorem~\ref{thm immersed module correspondence}, this gives
the following.

\begin{cor}\label{cor microlocal sheaves}
	Under the non-positivity condition~\eqref{eqn non-positively grading
		condition}, the immersed compact Fukaya category $\cF(P;d_e)$ of the plumbing $P$ is
	equivalent to the category of microlocal sheaves on the skeleton
	$\mathbb L = \bigcup_{v\in V(Q)} M_v$ of $P$, equipped with the appropriate grading
	structure:
	\[
	\mu\mathrm{Sh}(\mathbb L) \;\simeq\; \cF(P;d_e).
	\]
\end{cor}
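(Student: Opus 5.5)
The plan is to compose two equivalences. The first is $\cF(P;d_e)\simeq\Prop\,\cW(P;d_e)$, which is Theorem~\ref{thm immersed module correspondence} under the non-positivity condition~\eqref{eqn non-positively grading condition}. The second is $\Prop\,\cW(P;d_e)\simeq\mu\mathrm{Sh}(\mathbb L)$, which is the general statement for Weinstein manifolds of finite type recalled just before the corollary: combine \cite{nad16}, $\mu\mathrm{Sh}(\mathbb L)\simeq\Prop\,\mu\mathrm{Sh}^w(\mathbb L)$, with \cite{gps3}, $\mu\mathrm{Sh}^w(\mathbb L)\simeq\cW(P)$, and use the fact that $\Prop$ is invariant under Morita (in particular pretriangulated) equivalence. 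Three things then need to be checked.

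\textbf{Geometric input.} The plumbing $P=P(Q,M,\s)$ is a Weinstein manifold of finite type. Its skeleton $\mathbb L$ is the union of the cores $\bigcup_{v\in V(Q)}M_v$, which I will read off from the Weinstein structure built from the sectorial covering~\eqref{eqn Weinstein sectorial covering}. Also, $\cW(P;d_e)$ as described in Theorem~\ref{thm wrapped Fukaya category} is Morita equivalent to the actual wrapped Fukaya category carrying grading structure $\zeta$ and the standard background class $b$.

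\textbf{Matching structures.} The equivalence of \cite{gps3} requires a choice of Maslov data on $\mathbb L$, that is, a grading and orientation/background structure, and this must correspond to $(\zeta,b)$. I will take the microlocal sheaf category with the Maslov data induced by $\zeta\in H^1(P;\Z)$ (equivalently by $\{d_e\}$ via~\eqref{eqn eta}) and by the standard background class; this is what ``appropriate grading structure'' in the statement refers to. Since \cite{gps3} is functorial in these data, the composite equivalence goes through.

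\textbf{Main obstacle.} The hardest step is this compatibility of gradings and background classes between the sheaf and Fukaya sides. On each cotangent factor $T^*M_v$ I would reduce to the Nadler--Zaslow conventions of \cite{nad-zas09}, where the standard background class matches sheaves with ordinary coefficients. For the $H^1(Q;\Z)$ part, I would use that locality of $\mu\mathrm{Sh}$ lets me glue over the covering~\eqref{eqn Weinstein sectorial covering}. The shifts $d_e$ at each plumbing sector $\Pi_n$ then appear on both sides as the same twist of the gluing data, compatibly with Remark~\ref{remark specific representation}(2).

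Once these checks are done, composing the two equivalences with Theorem~\ref{thm immersed module correspondence} gives $\mu\mathrm{Sh}(\mathbb L)\simeq\cF(P;d_e)$.
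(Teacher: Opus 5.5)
Your proposal is correct and follows the paper's argument. The paper also proves the corollary by composing Theorem~\ref{thm immersed module correspondence} ($\cF(P;d_e)\simeq\Prop\,\cW(P;d_e)$) with $\mu\mathrm{Sh}(\mathbb L)\simeq\Prop\,\mu\mathrm{Sh}^w(\mathbb L)$ from \cite{nad16} and $\mu\mathrm{Sh}^w(\mathbb L)\simeq\cW(P)$ from \cite{gps3}; your extra checks (the Weinstein and skeleton input, and matching the Maslov data and background class with $(\zeta,b)$) are more careful than the paper, which covers them only by the phrase ``equipped with the appropriate grading structure''.
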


\begin{rmk}
	This generalises the Nadler--Zaslow theorem \cite[Theorem 4.1.3, Proposition 4.6.1]{nad09} (building on earlier work on \cite{nad-zas09}), which
	establishes the same equivalence for cotangent bundles with embedded
	Lagrangians, to plumbing spaces.  Immersed Lagrangians are necessary when $Q$ has loops or degree zero cycles.  This
	equivalence is dual to the Lagrangian--sheaf correspondence of
	\cite{gps3}, which establishes the wrapped analogue.
\end{rmk}

\section{An alternative proof}
\label{section an alternative proof}

In this section, we provide an alternative proof of Theorem \ref{thm intro compact Lagrangian} that is proven in Section \ref{section the compact Fukaya category of plumbings}. 
In the previous section, we used the techniques from \cite{cdrgg17} in order to prove Theorem \ref{thm intro compact Lagrangian}, but the present section will use other methods from \cite{gps2}. 
This alternative way provides a pictorial explanation of the proof (see \ref{figure cobordism}), but the drawback is that it is related to an ad hoc idea, tailored in the setting of plumbing spaces.

\subsection{Motivation and Assumptions}
\label{subsection motivation and assumptions}

In order to give a pictorial description, we would like to start with a geometric motivation.
We also explain a technical assumption, which we need to apply the techniques from \cite{gps2}.

\subsubsection{Motivation} 
\label{subsubsection motivation}
Let $W$ be a Weinstein manifold equipped with a Weinstein sectorial covering $\{W_i\}$.
Ganatra--Pardon--Shende \cite{gps2} proves that the wrapped Fukaya category of $W$, $\cW(W)$, is equivalent to the gluing of those of $W_i$. 
More precisely, we have $\cW(W) \simeq \mathrm{hocolimit}\left( \cW(W_i)\right)$.

One question, which naturally follows the result of \cite{gps2}, is to ask about the computation of compact Fukaya category of $W$. 
To investigate this question, one could try to understand a compact Lagrangian $L \subset W$ as an object of the glued category, i.e., $\mathrm{hocolimit} \left(\cW(W_i)\right)$. 

From this point of view, one could see $L$ as a gluing of Lagrangians $L \cap W_i \subset W_i$, or equivalently, the corresponding object in $\cW(W_i)$. 
To be more precise, we note that $\cW(W_i)$ is defined to be the wrapped Fukaya category of its {\em convex completion} $\left(\widetilde{W}_i, \widetilde{\Lambda}_i\right)$. 
(See \cite[Section 2.7]{gps1} for the definition of convex completion.)
Geometrically, $L \cap W_i$ also extends to a Lagrangian $\widetilde{L}_i \subset \widetilde{W}_i$. 

Since $L$ is compact, the boundary of $L \cap W_i$ would be a subset of the actual boundary of the Weinstein sector $W_i$. 
Or equivalently, the asymptotic boundary of $\widetilde{L}_i$ is a subset of $\widetilde{\Lambda}_i$.
It means that $\widetilde{L}_i$ is an object of the {\em infinitesimal Fukaya category} of $\left(\widetilde{W}_i, \widetilde{\Lambda}_i\right)$.
(For the definition of infinitesimal Fukaya category, see \cite{Nadler14} and references therein.)

Thus, it would be natural to expect that the object $L$ of $\cF(W)$ is obtained by gluing objects of the infinitesimal Fukaya categories of convex completions of $W_i$.
And, the generators of $\cF(W)$ are also obtained by gluing generators of infinitesimal Fukaya categories, if there exist. 

We apply this idea to plumbing spaces. 
Let us recall that a plumbing space $P$ admits a Weinstein sectorial covering given in Equation \eqref{eqn Weinstein sectorial covering}. 
Using the notation from Section \ref{subsection plumbing space}, each sector in the covering is either a cotangent bundle $T^*\widetilde{M}_v$ or a plumbing sector $\Pi_n$. 

We note that the infinitesimal Fukaya category of a cotangent bundle $T^*\widetilde{M}_v$ is generated by the zero section equipped with local systems. 
And for the plumbing sector $\Pi_n = \left(\mathbb{C}^n, \partial_\infty \left(\mathbb{R}^n \cup i \mathbb{R}^n\right)\right)$, its infinitesimal Fukaya category would be generated by two Lagrangian disks $\mathbb{R}^n$ and $i \mathbb{R}^n$. 

The expected generators of $\cF(P)$ will be introduced in the next subsection. 
Before moving to the next topic, we would like to remark the following: 
In the plumbing sector $\Pi_n$, we have two Lagrangian disks that intersect each other. 
When we glue them to obtain a generator of $\cF(P)$, we may need to glue both Lagrangian disks.
It means that the expected generator could be an immersed Lagrangian. 

The simplest example could be found in the self-plumbing of one cotangent bundle, i.e., a plumbing space whose plumbing pattern is a quiver consisting of one vertex $v$ and one arrow $e = v \to v$. 
Then, the zero section of the cotangent bundle in the plumbing space is an immersed Lagrangian that is expected to be obtained by the above gluing procedure. 

It means that such immersed Lagrangians should be objects of the category that we are interested in.
In other words, the compact Fukaya category we are dealing in the current paper should contain some immersed Lagrangians. 
Thus, we discuss {\em immersed Lagrangian Floer theory} below. 

The concept of immersed Lagrangian Floer theory has been studied by many researchers, for example, see \cite{Akaho-Joyce10, gao17, als-bao18}, etc.
However, to our best knowledge, it is not fully established in the current literature. 
Thus, we assume that the theory is well-defined in below. 
More specific assumptions are stated in Section \ref{subsubsection assumptions}.

\subsubsection{Assumptions}
\label{subsubsection assumptions} 
From above, we need to assume that immersed Lagrangian Floer theory is well-established.
It is explained in Section \ref{subsection recalled results immersed}. 

Since holomorphic curve counts is assumed to define $\cW^{im}(X,\Lambda)$, it is believable that $\cW^{im}(X,\Lambda)$ satisfies basic properties of holomorphic curve counts. 
Such properties are axiomatized in \cite[Secton 2.3]{gps2}, and if a category satisfies these properties, it is called {\em abstract (wrapped) Floer setup}.
Moreover, \cite[Section 2.4]{gps2} proves that the usual wrapped Fukaya category $\cW(X,\Lambda)$ satisfies the properties. 

Thus, it sounds reasonable to believe that wrapped Fukaya category of immersed Lagrangians also is an abstract (wrapped) Floer setup, but we do not prove this for simplicity of the paper. 
Instead, we assume that in the current section.
In other words,
\begin{assumption}
	\label{assumption 2}
	$\cW^{im}(X,\Lambda)$ is an abstract (wrapped) Floer setup.
\end{assumption}

\subsection{Immersed Lagrangians in plumbings}
\label{subsection immersed Lagrangians in plumbings}
We now adapt the idea in Section \ref{subsection motivation and assumptions} to plumbing spaces $P$. 
We first fix a collection of immersed Lagrangians that are, by arguments in Section \ref{subsection motivation and assumptions}, expected to generate the subcategory of $\cW(P)$, consisting of compact Lagrangians. 

Again, we let $P$ denote a plumbing space corresponding to plumbing data $(Q,M,\s)$ equipped with a non-positively graded $\{d_e\}$. 
Let us fix a sub-quiver $I$ of $V(Q)$, i.e., $I$ is a quiver such that $V(I) \subset V(Q), E(I) \subset E(Q)$. 
Then, we can consider a Lagrangian $L$ defined as 
\[L := \cup_{v \in V(I)} M_v.\]
We note that $L$ is an immersed Lagrangian if there exists an arrow $e = v \to w \in E(Q)$ such that $v, w \in V(I)$. 
Moreover, immersing points of $L$ are plumbing points, so the set of immersing points of $L$ can be identified with the following set 
\[\left\{e = v \to w \in E(Q) | v, w \in V(I)\right\}.\]

Before going further, we specify a grading on $M_v$. 
Recall that $M_v$ is the zero section of cotangent bundle $T^*M_v$, of which $P$ consists. 
Then, its cotangent fiber corresponds to a cocore generator $L_v$ of $\cW(P;d_e)$. 
We fix a grading on $M_v$ so that the intersection point between the zero section $M_v$ and a cotangent fiber $L_v$ has degree $0$, i.e., 
\[\hom^0\left(M_v,L_v\right) \neq 0, \text{ or equivalently,  } \hom^n\left(L_v,M_v\right) \neq 0,\]
in the wrapped Fukaya category of $T^*M_v$. 

Inside $T^*M_v$, $M_v$ {\em equipped with a local system} could be seen as an object of its Fukaya category. 
Let $\rho_v$ denote a local system on $M_v$ and $(M_v,\rho_v)$ denote the corresponding object in the Fukaya category. 
For convenience, let $M_V^{\rho_v}$ denote the Lagrangian $M_v$ equipped with an actual local system $\rho_v$. 

Then, we have a (possibly) immersed Lagrangian submanifold $L = \oplus_{v \in V(I)} M_v$ and its local system $\rho = \oplus_{v \in V(I)} \rho_v$ inside the plumbing space $P$, since $P$ is a combination of cotangent bundles $T^*M_v$ for $v \in V(Q)$. 

Since $(L, \rho)$ is an immersed Lagrangian, we could consider bounding cochains on it.
In the rest of the section, we restrict our attention to some specific bounding cochains that are constructed below: 
First, we fix the zero bounding cochain.
Then, 
\[\left(L, \rho, 0\right) = \oplus_{v \in V(I)} M_v^{\rho_v},\]
in the usual wrapped Fukaya category $\cW(P;d_e)$. 
Thus, There exists a morphism space 
\[\hom^*_\cW\left(\left(L,\rho,0\right),\left(L,\rho,0\right)\right).\]

Now, we recall that for every $e = v \to w \in E(Q)$, we have a plumbing point $p_e \in M_v \cap M_w$ corresponding to $e$. 
Then, there exists a generator in $\hom^{1-d_e}_\cW\left(M_w,M_v\right)$ corresponding to $p_e$. 
Especially, when $e \in E(I)$ is of degree $0$, i.e., $d_e=0$, $p_e \in \hom_\cW^1(M_w,M_v)$. 
Let $b_e$ denote a choice of morphism in $\hom^1_\cW\left(M_w^{\rho_w},M_v^{\rho_v}\right)$, which is represented by a tensor product with the plumbing point $p_e$ in 
\[\hom^1_\cW\left(M_w^{\rho_w},M_v^{\rho_v}\right) = \hom^*\left(E_v, E_w\right) \otimes \hom_\cW^{1-*}\left(M_v, M_w\right),\]
where $E_v$ and $E_w$ denote the stalk of $\rho_v$ and $\rho_w$. 
(Here, we are seeing a local system as a locally constant sheaf, so the word ``stalk'' makes sense.)
Now, we consider the bounding cochains $b$ which can be written as
\[b = \oplus_{e \in E(I), d_e =0} b_e \in \hom^1_\cW\left((L,\rho,0),(L,\rho,0)\right).\]
We note that one can easily check that such a $b$ satisfies the Maurer-Cartan equation.

\begin{rmk}
	\label{rmk immersing point for a loop}
	We note that $M_v$ could be an immersed Lagrangian if $Q$ has a loop, i.e., length $1$ cycle. 
	Thus, rigorously, we should prove that $M_v^{\rho_v}$, equipped with the zero bounding cochain, is an object of $\cW(P;d_e)$ for all $v \in V(Q)$, before discussing $\hom_{\cW}^1 \left(M_v^{\rho_v},M_w^{\rho_w}\right)$. 
	One could prove it by employing the proof of Lemma \ref{lem immersed Lagrangian}, and we skip the proof.
\end{rmk}

Now, we can state and prove Lemma \ref{lem immersed Lagrangian}.
\begin{lem}
	\label{lem immersed Lagrangian}
	In the above setting and under Assumption \ref{assumption 2}, a Lagrangian equipped with a local system and a bounding cochain $(L,\rho,b)$, constructed as above, is an admissible object of the wrapped Fukaya category $\cW(P;d_e)$, or equivalently, is generated by cocore generators of $\cW(P;d_e)$.
\end{lem}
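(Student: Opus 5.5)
The plan is to use the sectorial covering \eqref{eqn Weinstein sectorial covering} together with the stop-removal and Künneth/gluing machinery of \cite{gps2}. By Assumption \ref{assumption 2}, $\cW^{im}$ is an abstract wrapped Floer setup, so that machinery applies to immersed objects as well. First I will cut $(L,\rho,b)$ along the sectorial hypersurfaces separating the pieces $T^*\widetilde{M}_v$ from the plumbing sectors $\Pi_n$. Inside each $T^*\widetilde{M}_v$ with $v\in V(I)$, the piece $L\cap T^*\widetilde{M}_v$ is the zero section of $\widetilde{M}_v$ carrying the restricted local system $\rho_v$. It is embedded: immersed points only occur at plumbing points, which lie in the $\Pi_n$ pieces, including in the self-loop case of Remark \ref{rmk immersing point for a loop}. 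Inside the plumbing sector for an arrow $e=v\to w$, the piece is $D^n_s$, $D^n_t$, or $D^n_s\cup D^n_t$, according to which of $v,w$ lie in $V(I)$. The bounding cochain $b_e$ is supported at the unique double point when $e\in E(I)$ and $d_e=0$.

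The second step is to show each local piece is generated by the local cocores and linking disks. For the cotangent pieces this is the usual generation of $\cW(T^*\widetilde{M}_v)$ by a cotangent fibre together with the linking disks of the boundary stops, as in \cite{gps2, cdrgg17}. For $\Pi_n$, the embedded pieces $D^n_s$ and $D^n_t$ are generated by $L_s,L_t$, as recorded in Remark \ref{rmk:plumbing-sector-disks}. For the immersed piece $(D^n_s\cup D^n_t,b_e)$ I would exhibit it as a twisted complex $\Cone(D^n_t\xrightarrow{b_e}D^n_s)$ (direction according to $p_e$). Concretely, surgery at the double point $p_e$ with the positive-action resolution produces an embedded disk isotopic to the cone. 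Since $|p_e|=1$ when $d_e=0$, the surgery exact triangle identifies $(D^n_s\cup D^n_t,b_e)$ with this cone. The resulting Yoneda module is an extension of $E_s$ by $E_t$ in which $x$ acts by $b_e$, which confirms that it lies in $\Tw\,\cD_n$.

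The third step is gluing. By \cite{gps2}, $\cW(P;d_e)$ is the homotopy colimit of the sector categories along the $\cW(T^*S^{n-1})$ boundary terms, via the functors $\Phi_e,\Psi_e$ of Theorem \ref{thm:wfuk-plumbing-sector}. The pushforward functors along the sector inclusions commute with forming twisted complexes. I would therefore build a global twisted complex of cocores $L_v$ whose restrictions agree with the local generation data on overlaps. Equivalently, I would argue inductively over the pieces, using the stop-removal exact triangles to express $(L,\rho,b)$ as an iterated cone of pushforwards of local objects, each of which is already generated by the cocores.

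I expect the main obstacle to be the compatibility of the immersed pieces with the gluing. The gluing requires that the Floer cohomology of $(L,\rho,b)$ with linking disks and cocores be computed sector by sector. For this I need that no holomorphic disk with boundary on $L$ and corners at the $b_e$ leaves a plumbing sector. I would handle it with the homological argument already used in the Maurer--Cartan verification. A disk passing through several plumbing points has boundary with a nonzero $H_1(Q;\Z)$ component, which is impossible for a disk. Hence all $b$-deformed operations localise to the individual $\Pi_n$, and the local cone description from the second step glues to the global one.
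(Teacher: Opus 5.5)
Your architecture matches the paper's: cut $L$ along the sectorial covering, show each piece is generated in its own sector, then reassemble. Your local pieces are the paper's $X_v$ and $X_e$, and identifying $(C_t'\cup C_h',b_e)$ with a two-term twisted complex is also how the paper treats $X_e$. The gap is the reassembly step, which is the only non-formal step of the lemma.

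Knowing that each cut piece is generated in its sector says nothing yet about $L$. The homotopy colimit formula describes $\cW(P;d_e)$, the category you are trying to land in. It does not tell you that a given immersed object of $\cW^{im}(P)$ is a twisted complex of pushforwards of sector objects. Wrapped categories are covariant under sector inclusions, so there is no restriction functor to the sectors. Hence ``a global twisted complex whose restrictions agree with the local data on overlaps'' is not a usable criterion. Likewise, stop-removal triangles do not by themselves cut a given Lagrangian into pieces.

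What is missing is a geometric identification of $L$ with a specific twisted complex of pushed-off pieces. The paper gets it by adding two ingredients you omit:
\begin{enumerate}
\item \emph{Neck pieces.} These are Lagrangians $Y_{e,v}\cong T^*_0[-1,1]\times S^{n-1}$ in the thickened overlaps $T^*[-1,1]\times T^*S^{n-1}$.
\item \emph{A cobordism.} This is an exact cobordism $C$ in the symplectization of $\partial_\infty P$, built from the Reeb flow, joining $\partial_\infty X_v$ and $\partial_\infty X_e$ to $\partial_\infty Y_{e,v}$.
\end{enumerate}
With these, $L$ is Hamiltonian isotopic to $\sqcup_v X_v\#^C\sqcup_e X_e\#^C\sqcup_{(e,v)}Y_{e,v}$. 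The cobordism argument in the proof of \cite[Proposition 1.37]{gps2}, which applies to $\cW^{im}$ by Assumption \ref{assumption 2}, then gives
\[
L\simeq\left((\oplus_v X_v)\oplus(\oplus_e X_e)\to\oplus_{(e,v)}Y_{e,v}\right),
\]
and every term is admissible in $\cW(P;d_e)$.

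The obstacle you single out is not the relevant one. Localising $b$-deformed disks to the plumbing sectors would at most let you compute the Yoneda module $\hom(L_v,(L,\rho,b))$ over the cocores. Suppose that module agrees with that of a twisted complex $T$ of cocores. To conclude $(L,\rho,b)\simeq T$, you need cocores to detect objects of $\cW^{im}(P)$: the cone of $T\to L$ has vanishing morphisms from every cocore, and you must deduce it is zero. That is the generation statement being proved, so this route is circular without the cobordism input.

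A minor point: surgery of two transverse Lagrangian $n$-disks at one point gives $S^{n-1}\times\R$, not a disk. You do not need surgery anyway, since identifying the union with its bounding cochain as a cone is formal.
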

\begin{proof}
	We fix a sub-quiver $I$ of $Q$.
	Also, we fix an immersed Lagrangian equipped with a local system and a bounding cochain constructed as above, i.e.,
	\[\left(L = \cup_{v\in V(I)} M_v, \rho = \oplus_{v\in V(I)} \rho_v, b = \oplus_{e \in E(I), d_e =0} b_e\right).\]
	In order to prove that the fixed immersed Lagrangian is generated by cocores, we use (the proof of) \cite[Lemma 1.37]{gps2}.
	
	More precisely, we first choose some Lagrangians and an exact Lagrangian cobordism $C$ in the symplectization of $\partial_\infty P$ such that 
	\begin{itemize}
		\item the chosen Lagrangians are generated by cocores,
		\item the negative end of $C$ is equal to the union of asymptotic boundaries of the chosen Lagrangians, and
		\item the gluing of the chosen Lagrangian and $C$ is our starting immersed Lagrangian $L$.
	\end{itemize} 
	Then, the proof of \cite[Proposition 1.37]{gps2} (and that of \cite[Lemma 4.1]{gps2}), together with Assumption \ref{assumption 2}, proves that $L$ is equivalent to a twisted complex consisting of the chosen Lagrangians in $\cW^{im}(P)$. 
	It implies that $L$ is also generated by cocores, since the chosen Lagrangians are.  
	
	To construct Lagrangians and cobordism $C$ above, we consider the Weinstein sectorial covering in Equation \eqref{eqn Weinstein sectorial covering}, i.e., 
	\[P=\bigcup_{v\in V(Q)} T^*\widetilde M_v \,\cup\, \bigcup_{e\in E(Q)} \Pi_n.\] 
	Then, for $v \in V(Q)$ and $e = t \to h \in E(Q)$, we define Lagrangians $X_v, X_e$, and $Y_{e,t}, Y_{e,h}$ in $T^*\widetilde{M}_v$, $\Pi_n$, and their intersections, as follows. 
	Note that Figure \ref{figure cobordism} provides a conceptual picture of the chosen Lagrangians and cobordisms. 
	\vskip0.1in
	
	\noindent{\em For $v \in V(Q)$}:
	First, if $v \notin V(I)$, then we set $X_v$ an empty set. 
	 
	Now, let us assume that $v \in V(I) \subset V(Q)$. 
	We consider the zero section $\widetilde M_v$ of the Weinstein sector $T^*\widetilde M_v$. 
	After taking convex completion of $T^*\widetilde M_v$ to a Weinstein manifold with stop as described in \cite[Section 2.8]{gps1}, $\widetilde M_v$ becomes asymptotic to the stop along its boundary, hence $\widetilde{M}_v$ an object of the infinitesimal wrapped Fukaya category of the convex completion. 
	Let $X_v$ be a perturbation of $\widetilde M_v$ by a small negative Reeb flow inside the completion.
	Then, as described in an example of \cite[Section 1.2]{gps1}, $X_v$ is disjoint from the stop. 
	Hence, after reversing the convex completion, i.e., after removing a small neighborhood of the stop, $X_v$ is a Lagrangian submanifold in $T^*\widetilde M_v$ which does not touch the actual boundary of the Weinstein sector.
	Thus, $X_v$ is an admissible object of the wrapped Fukaya category $\cW(T^*\widetilde M_v)$.
	
	Moreover, the inclusion of Weinstein sectors $T^*\widetilde M_v\hookrightarrow P$ induces a functor $\cW(T^*\widetilde M_v)\to\cW(P;d_e)$ by \cite{gps1}, which sends Lagrangians to their images under the inclusion. 
	Therefore, we can consider $X_v$ as an object of $\cW(P;d_e)$.
	
	Let us recall that the given local system $\rho_v$ on $M_v$ can be defined to be a representation of the first fundamental group $\pi_1(M_v)$. 
	Since $\widetilde{M}_v$ is obtained by removing $n$-dimensional open balls from $M_v$, when $n \geq 3$, $\widetilde{M}_v$ and $M_v$ have the same first fundamental group. 
	(It is an easy application of the Van Kampen theorem.)
	Thus, a local system $\rho_v$ can be seen as that on $\widetilde{M}_v$, and even on $X_v$.
	
	Finally, a Lagrangian equipped with a local system and the zero bounding cochain $\left(X_v, \rho_v, 0\right)$ is an admissible object of $\cW(P;d_e)$. 
	\vskip0.1in
	
	\noindent{\em For $e \in E(Q)$}:
	Let $\Pi_n$ be a Weinstein sector in the covering, corresponding to $e = t \to h \in E(Q)$. 
	We first set notation. 
	
	We recall that the convex completion of $\Pi_n$ is the Weinstein manifold with stop $(\C^n,\partial_{\infty}(\R^n\sqcup i\R^n))$. 
	Let $C_t$ denote $\R^n\subset\C^n$, and let $C_h$ denote $i\R^n\subset\C^n$. 
	They are asymptotic to the stop, hence objects of the infinitesimal wrapped Fukaya category of $(\C^n,\partial_{\infty}(\R^n\sqcup i\R^n))$. 
	As we discussed before, the perturbation $C_t'$ (resp.\ $C_h'$) of $C_t$ (resp.\ $C_h$) by a small negative Reeb flow inside the completion makes it an object of the wrapped Fukaya category $\cW(\Pi_n)$.
	
	Recall that, for each $e = t\to h\in E(I)$, the inclusion of Weinstein sectors $\Pi_n\hookrightarrow P$ induces a functor $\cW(\Pi_n)\to\cW(P;d_e)$, which sends Lagrangians $C_t'$ and $C_h'$ to their images under the inclusion. 
	Call again the images $C_t'$ and $C_h'$, respectively. 
	Then, we can consider $C_t'$ and $C_h'$ as objects of $\cW(P;d_e)$.
	
	We note that they are topologically disks, they could not have a non-trivial local system. 
	Thus, to specify a local system on them, it is enough to fix the rank of the local system. 
	Set $\rho_t'$ (resp.\ $\rho_h'$) be the trivial local system on $C'_t$ (resp.\ $C'_h$), whose rank is the same as the rank of $\rho_t$ (resp.\ $\rho_h$).
	Recall that $\rho_t$ and $\rho_h$ are the given local systems on $M_t$ and $M_h$, which are parts of $L$. 
	Note that if $t$ (resp.\ $h$) $\notin V(I)$, we could think $\rho_t'$ (resp.\ $\rho_h'$) as a zero dimensional representation.  
	
	We also note that the morphism space from $C_h'$ and $C_t'$ is one-dimensional in $\cW(\Pi_n)$. 
	When we assume that $C_t'$ and $C_h'$ are graded as subsets of $M_v$ and $M_w$, there exists a nonzero degree $(1-d_e)$ morphism from $C_t'$ to $C_h'$.
	Moreover, the morphism space $\hom_\cW^*\left((C_h',\rho_h'), (C_t',\rho_t')\right)$ is equivalent to the subspace of $\hom_\cW^*\left(M_h^{\rho_h}, M_t^{\rho_t}\right)$, which is generated by the plumbing point corresponding to $e$. 
	
	Now, we choose a Lagrangian $X_e$ by considering each of the following cases separately.
	\begin{enumerate}
		\item[(i)] $e \in E(I)$;
		\item[(ii)] $e = t \to h \notin E(I)$, but $\{t, h\} \cap V(I) \neq \varnothing$;
		\item[(iii)] Otherwise. 
	\end{enumerate}
	
	The simplest case is (iii). 
	We just set $X_e$ the empty set. 
	
	For (ii), our choice is dependent on the intersection $\{t,h\} \cap V(I)$. 
	Based on the intersection, we set 
	\[X_e = \begin{cases}
		 (C_t', \rho_t', 0) &\text{  if  } \{t,h\} \cap V(I) = \{t\}, \\
		 (C_h', \rho_h', 0) &\text{  if  } \{t,h\} \cap V(I) = \{h\}, \\
		 (C_t' \cup C_h', \rho_t' \oplus \rho_h', 0) &\text{  if  } \{t,h\} \cap V(I) = \{t, h\}.
	\end{cases}\]
	We note that the first two cases do not need a bounding cochain, since the corresponding Lagrangians are embedded.
	For the last case, we need to specify a bounding cochain since $C_t' \cup C_h'$ has a double point.
	We choose the zero bounding cochain (since $e \notin E(I)$). 
	
	For (i), the Lagrangian submanifold and the local system on it are defined as the same as the last case of (ii), i.e., 
	\[C_t' \cup C_h' \text{  and  } \rho_t' \oplus \rho_h'.\]
	To choose a bounding cochain on it, we consider $d_e$. 
	To get a nonzero bounding cochain, we need a nonzero degree $1$ morphism between $C_t'$ and $C_h'$. 
	It only happens when $d_e = 0$, since $d_e$ satisfies \ref{eqn non-positively grading condition}.

	Thus, if $d_e \neq 0$, then $X_e$ should be 
	\[X_e = (C_t' \cup C_h', \rho_t' \oplus \rho_h', 0).\]
	If $d_e =0$, we recall that $\hom_\cW^1\left((C_h',\rho_h'), (C_t',\rho_t')\right)$ is equivalent to the subspace of $\hom_\cW^1\left(M_h^{\rho_h}, M_t^{\rho_t}\right)$.
	Thus, the given bounding cochain $b_e$ of $L$ can be seen as an element of $\hom_\cW^1\left((C_h',\rho_h'), (C_t',\rho_t')\right)$.
	And, we set 
	\[X_e = \left(C_t' \cup C_h', \rho_t' \oplus \rho_h', b_e\right),\]
	which is equivalent to the following twisted complex, in $\cW(P;d_e)$, 
	\[\left((C_t',\rho_t') \oplus (C_h',\rho_h'), \begin{bmatrix}
		0 & 0 \\ b_e & 0
	\end{bmatrix}\right).\]
	\vskip0.1in
	
	\noindent{\em For the pair $(e,v) \in E(Q) \times V(Q)$, where $v$ is one of the end points of $e$}:
	For this case, two Weinstein sectors $T^*\widetilde{M}_v$ and $\Pi_n$ corresponding to $e$ intersect along their boundary.
	By slightly thickening it, we assume that their intersection is another Weinstein sector of codimension $0$ in $P$, which is equivalent to 
	\[T^*[-1,1] \times T^*S^{n-1}.\]
	
	In the intersection, if $v \in V(I)$ (and so that $X_v$ is not empty), we fix a Lagrangian $T^*_0[-1,1] \times S^{n-1}$, where the first factor is a cotangent fiber of $T^*[-1,1]$ and the second factor $S^{n-1}$ is the zero section of $T^*S^{n-1}$. 
	Note that the fixed Lagrangian does not touch the actual boundary of the intersection Weinstein sector. 
	We set $Y_{e,v}$ as the fixed Lagrangian in the thickened intersection of $T^*\widetilde{M}_v$ (resp.\ $T^*\widetilde{M}_w$) and $\Pi_n$.
	Note that $Y_{e,v}$ is homeomorphic to $[-1,1] \times S^{n-1}$ with $n \geq 3$, they cannot have a nontrivial local system. 
	Thus, similar to the case of $C_t'$ and $C_h'$ above, fixing a rank of a local system is enough to specify a local system on it. 
	We set $\rho_{e,v}$ a local system on $Y_{e,v}$ whose rank is the same as that of $\rho_v$.
	And since $Y_{e,v}$ is embedded in the intersection and also in $P$, we do not specify a bounding cochain of it. 
	Then, $\left(Y_{e,v}, \rho_{e,v}, 0\right)$ is an admissible object in $\cW(P;d_e)$.
	
	If $v \notin V(I)$, we simply choose $Y_{e,v} = \varnothing$, i.e., the empty set.
	\vskip0.1in

	\noindent{\em The cobordism $C$:}
	Now, from the construction of $Y_{e,v}$, one can see that if $v \in V(I)$, then $Y_{e,v}$ has two boundary components such that each of them is a Legendrian sphere in $\partial_\infty P$. 
	Moreover, one can also see that the positive Reeb flow identifies a boundary component of $X_v$ (resp.\ $X_e$) with one of two boundary components of $Y_{e,v}$. 
	Thus, it is easy to construct a cobordism $C$ connecting $X_v$ (resp.\ $X_e$) and $Y_{e,v}$ for all pair $(e,v) \in E(Q) \times V(Q)$ where $v$ is one endpoint of $e$. 
	
	Figure \ref{figure cobordism} is a conceptual picture describing the Lagrangians constructed above, near $\Pi_n$ corresponding to $e = v \to w \in E(Q)$.
	\begin{figure}[h]
		\centering
\begingroup%
  \makeatletter%
  \providecommand\color[2][]{%
    \errmessage{(Inkscape) Color is used for the text in Inkscape, but the package 'color.sty' is not loaded}%
    \renewcommand\color[2][]{}%
  }%
  \providecommand\transparent[1]{%
    \errmessage{(Inkscape) Transparency is used (non-zero) for the text in Inkscape, but the package 'transparent.sty' is not loaded}%
    \renewcommand\transparent[1]{}%
  }%
  \providecommand\rotatebox[2]{#2}%
  \newcommand*\fsize{\dimexpr\f@size pt\relax}%
  \newcommand*\lineheight[1]{\fontsize{\fsize}{#1\fsize}\selectfont}%
  \ifx\svgwidth\undefined%
    \setlength{\unitlength}{226.77165354bp}%
    \ifx\svgscale\undefined%
      \relax%
    \else%
      \setlength{\unitlength}{\unitlength * \real{\svgscale}}%
    \fi%
  \else%
    \setlength{\unitlength}{\svgwidth}%
  \fi%
  \global\let\svgwidth\undefined%
  \global\let\svgscale\undefined%
  \makeatother%
  \begin{picture}(1,1.00062504)%
    \lineheight{1}%
    \setlength\tabcolsep{0pt}%
    \put(0,0){\includegraphics[width=\unitlength,page=1]{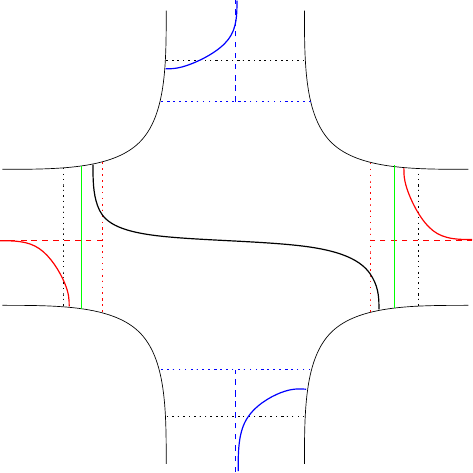}}%
    \put(0.36021423,0.51298237){\color[rgb]{0,0,0}\makebox(0,0)[lt]{\lineheight{1.25}\smash{\begin{tabular}[t]{l}$C_{e,v}'$\end{tabular}}}}%
    \put(0.10300157,0.59871998){\color[rgb]{0,0,0}\makebox(0,0)[lt]{\lineheight{1.25}\smash{\begin{tabular}[t]{l}$Y_{e,v}$\end{tabular}}}}%
    \put(0.04892095,0.40877829){\color[rgb]{0,0,0}\makebox(0,0)[lt]{\lineheight{1.25}\smash{\begin{tabular}[t]{l}$X_v$\end{tabular}}}}%
    \put(0.50578775,0.96475632){\color[rgb]{0,0,0}\makebox(0,0)[lt]{\lineheight{1.25}\smash{\begin{tabular}[t]{l}$X_w$\end{tabular}}}}%
    \put(0,0){\includegraphics[width=\unitlength,page=2]{cobordism.pdf}}%
  \end{picture}%
\endgroup%
		
		\caption{This figure describes Lagrangians constructed above, near a $\Pi_n$ corresponding to an arrow $e = v \to w \in E(Q)$.
			Especially, the black dotted lines are boundaries of (the extended) $\Pi_n$ and the red and blue dotted lines are boundaries of (the extended) $T^*\widetilde{M}_v$ and $T^*\widetilde{M}_w$. 
			The red and blue dashed lines correspond to $\widetilde{M}_v$ and $\widetilde{M}_w$, respectively.
			Since the arrows point Reeb directions along the asymptotic boundary, the red and blue thick lines describe $X_v$ and $X_w$. 
			Similarly, the center black thick line corresponds to $C_{e,w}'$.
			And, finally, if we assume that $v \in V(I)$, the green corresponds to $Y_{e,v}$, which is homeomorphic to $T^*_0[-1,1] \times S^{n-1}$.
			Finally, the purple part describes the cobordism connecting $\widetilde{M}_v'$ and $C_{e,v}'$.}
		\label{figure cobordism}
	\end{figure} 
	Note that Figure \ref{figure cobordism} describes the case of $n=1$. 
	Thus, the intersection of two Weinstein sectors $T^*\widetilde{M}_v$ and $\Pi_n$ is homeomorphic to $T^*[-1,1] \times T^*S^0$, i.e., two copies of $T^*[-1,1]$, which are bounded by black and red dotted lines. 
	The cobordism $C$ connecting $X_v$ and $Y_{e,v}$ are purple curves connecting red and green lines in the picture. 
	We note that that part of $C$ can be seen as a product $C_0 \times S^0 \subset T^*[-1,1] \times T^*S^0$. 
	For a general $n$, we construct a cobordism $C$ as a product of $C_0 \times S^{n-1} \subset T^*[-1,1] \times T^*S^{n-1}$ in the intersections of Weinstein sectors. 
	
	As we spoke at the beginning of the proof, one can easily see that the fixed immersed Lagrangian $\left(L = \cup_{v \in V(I)} M_v, \rho=\oplus_{v \in V(I)} \rho_v, b = \oplus_{e \in E(I), d_e =0} b_e\right)$ is Hamiltonian equivalent to 
	\[\sqcup_{v \in V(Q)} X_v \#^C \sqcup_{e \in E(Q)} X_e \#^C \sqcup_{(e, v)} Y_{e,v},\]
	where $\#^C$ denotes the gluing via cobordism $C$.
	Then, the proof of \cite[Proposition 1.37]{gps2} proves that the fixed immersed Lagrangian is equivalent to the twisted complex consisting of $X_v, X_e, Y_{e,v}$ in $\cW^{im}$.
	And, since all of $X_v, X_e, Y_{e,v}$ are generated by cocores in $\cW(P;d_e)$, so is the fixed Lagrangian.  
\end{proof}

We note that Lemma \ref{lem immersed Lagrangian} is a special case of Theorem \ref{thm immersed generation recalled}, but the proof is different from that of Theorem \ref{thm immersed generation recalled}. 
In \cite{Jeong-Karabas-Lee26}, Theorem \ref{thm immersed generation recalled} is proven by applying the techniques from \cite{cdrgg17}, but the above proof of Lemma \ref{lem immersed Lagrangian} employees \cite{gps2}, under Assumption \ref{assumption 2}. 

\begin{rmk}
	\label{rmk twisted complex}
	In the above proof of Lemma \ref{lem immersed Lagrangian}, we said that $(L,\rho,b)$ would be equivalent to a twisted complex consisting of $X_v, X_e, Y_{e,v}$. 
	The proof of \cite[Proposition 1.37]{gps2} specifies the twisted complex.
	More specifically, 
	\[\left(L,\rho,b\right) \simeq\left( \left(\oplus_v X_v\right) \bigoplus \left(\oplus_e X_e \right)\right) \stackrel{f}{\to} \left( \oplus_{e,v} Y_{e,v}	\right),\]
	where $f$ is given by the Reeb flow identifying $\partial_\infty X_v \cup \partial_\infty X_e$ and $\partial_\infty Y_{e,v}$. 
\end{rmk}

Now, we define our {\em compact Fukaya category} of a plumbing space $P$ as follows: 
\begin{dfn}
	\label{def compact Fukaya category}
	Let $P = P(Q,M,\s)$ be a plumbing space equipped with a non-positively grading $\{d_e\}$. 
	Then, the {\em compact Fukaya category} of $P$, denoted by $\cF(P;d_e)$, is the subcategory of $\cW(P;d_e)$ generated by all compact, possibly immersed Lagrangians that are admissible in $\cW(P;d_e)$. 
\end{dfn}

Note that Definition \ref{def compact Fukaya category} is equivalent to Definition \ref{dfn compact Fukaya category}, if $W$ in Definition \ref{dfn compact Fukaya category} is a plumbing space $P$.

\subsection{Proof of Theorem \ref{thm intro compact Lagrangian}}
\label{subsection proof of compact Fukaya category generation theorem}
Let $P = P(Q,M,\s)$ be a plumbing space equipped with a non-positively grading $\{d_e\}$.
For convenience, we set 
\[\cW := \cW(P;d_e) \text{  and  } \cF:= \cF(P;d_e).\]

In this subsection, we reprove our main result of Section \ref{section the compact Fukaya category of plumbings}.
To prove Theorem \ref{thm intro compact Lagrangian}, we recall that the Yoneda functor sends $\cW(P;d_e)$ to $\Mod \cW$. 
And the Yoneda image of the subcategory $\cF$ is contained in the category of proper modules, i.e., $\Prop \cW$. 
Thus, for any generator $E$ of $\Prop \cW$ given in Theorem \ref{thm generation of proper modules}, if one can find an immersed Lagrangian $\left(L, \rho, b\right) \in \cF$ such that $\cY_\cW(L,\rho,b) = E$, then Theorem \ref{thm intro compact Lagrangian} holds. 

We note that in Theorem \ref{thm generation of proper modules}, $E$ is in $\mathcal{C}_{(C_0,0)}$ where $C_0 \in \mathbb{B}$. 
By definition, $C_0$ can be seen as a sub-quiver of $Q$.
Thus, by applying Section \ref{subsection immersed Lagrangians in plumbings}, one can construct a collection of (possibly immersed) Lagrangians equipped with local systems and bounding cochains. 
\begin{dfn}
	\label{def collection of generating Lagrangians}
	For $C_0 \in \mathbb{B}$, let $\mathcal{L}_{C_0}$ denote the collection of Lagrangians equipped with local systems and bounding cochains $\left(L, \rho, b\right)$ that are constructed in Section \ref{subsection immersed Lagrangians in plumbings} for a fixed sub-quiver $C_0$ of $Q$. 
\end{dfn}

Now, we can restate Theorem \ref{thm intro compact Lagrangian} and (re)prove it. 

\begin{thm}[Theorem \ref{thm intro compact Lagrangian}]
	\label{thm compact Fukaya}
	Let $P$ be a plumbing space corresponding to plumbing data $(Q,M,\s)$ equipped with non-positively graded $\{d_e\}$. 
	Then, the compact Fukaya category of $P$, $\cF(P;d_e)$, is generated by 
	\[\bigsqcup_{C_0 \in \mathbb{B}} \mathcal{L}_{C_0}.\]
\end{thm}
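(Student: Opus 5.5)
The plan is to reduce Theorem \ref{thm compact Fukaya} to Theorem \ref{thm generation of proper modules} via the Yoneda functor. Every object of $\cF$ lies in $\cW$, and by Lemma \ref{lem immersed Lagrangian} every $(L,\rho,b)\in\mathcal{L}_{C_0}$ is admissible in $\cW$. By Proposition \ref{prop compact Fukaya subcategories} (or directly from compactness), $\cY_\cW$ restricts to a fully faithful functor $\cF\to\Prop\,\cW$. Since a fully faithful exact functor reflects generation, it suffices to show two things. First, for every $C_0\in\mathbb{B}$ and every $E\in\cC_{(C_0,0)}$, there is $(L,\rho,b)\in\mathcal{L}_{C_0}$ with $\cY_\cW(L,\rho,b)\simeq E$. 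Second, every object of $\cF$ has Yoneda image in $\Prop\,\cW$. Granting both, Theorem \ref{thm generation of proper modules} writes $\cY(K)$, for any $K\in\cF$, as an iterated cone of shifts of such $E$'s. Pulling back along the fully faithful functor then expresses $K$ as a twisted complex in $\bigsqcup_{C_0}\mathcal{L}_{C_0}$.

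For the realization step, fix $E\in\cC_{(C_0,0)}$ and take $I=C_0$, viewed as a sub-quiver with $V(I)=V(C_0)$ and $E(I)=E(C_0)$. For $v\in V(C_0)$, define $\rho_v$ as in Step 1 of the proof of Theorem \ref{thm immersed module correspondence}. Since $E$ is concentrated in degree $0$, the restriction of $E$ to $C_{-*}(\Omega(M_v\setminus\mathrm{pt}))$ factors through $\pi_1(M_v\setminus\mathrm{pt})\cong\pi_1(M_v)$ for $n\ge 3$, and this gives a local system with stalk $E(L_v)$. For $e\in E(C_0)$ we have $d_e=0$ by Remark \ref{rmk degree 0 cycle}; set $b_e:=E(\overline{x}_e)$, possibly rescaled by the universal disk count, and set $b_e=0$ otherwise.

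The identification $\cY(L,\rho,b)\simeq E$ is checked on generators of $\cW$. This can be done either as in Step 2 of the proof of Theorem \ref{thm immersed module correspondence}, or more pictorially with the twisted complex of Remark \ref{rmk twisted complex}: evaluate $\hom(L_v,-)$ on the pieces $X_v$, $X_e$, $Y_{e,v}$. Only $X_v$ meets the cocore $L_v$, so $\cY(L)(L_v)\simeq E(L_v)$. The loop generators act through $\rho_v$. For $d_e=0$, the $x_e$-action is computed inside $\Pi_n$ by the twisted complex $X_e=((C_t',\rho_t')\oplus(C_h',\rho_h'),b_e)$. Combined with Remark \ref{rmk:plumbing-sector-disks}, this shows that the piece of the module on the sector is $E_{st}$ deformed by $b_e$ along $x$, which recovers $E(\overline{x}_e)$. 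All other generators, namely $y_e$, $h_v$, negative-degree loop chains, and $x_e$ with $d_e<0$, have negative degree, and hence act by zero on both sides. Higher $A_\infty$ module components of the comparison vanish for the same degree reason.

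The main obstacle is the $x_e$ computation: showing that no other holomorphic disks contribute. That is, disks passing through other plumbing points, or through the cobordism pieces $Y_{e,v}$, must be excluded, so that the action of $x_e$ is exactly $b_e$ up to a nonzero universal constant. I would first try the homological argument used for the Maurer--Cartan equation. Any contributing disk has null-homologous boundary, but a boundary running through an extra plumbing point has nonzero projection to $H_1(Q;\Z)$, which is a contradiction. Locality of the gluing in \cite{gps2} under Assumption \ref{assumption 2} should then confine the remaining disks to the single sector $\Pi_n$ of $e$, where the count is the universal constant $\lambda\neq 0$.

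Finally, to see that every object of $\cF$ has proper Yoneda image, compactness gives finite-dimensional $\hom(L_v,K)$. Note also that the construction of $\mathcal{L}_{C_0}$ realizes every $E$, not only indecomposable ones, because the local systems and $b_e$ are arbitrary.
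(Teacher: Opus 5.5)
Your proposal is correct and follows essentially the same route as the paper. You reduce, via Theorem~\ref{thm generation of proper modules} and the fully faithful Yoneda functor, to realizing each $E\in\cC_{(C_0,0)}$ by an object of $\mathcal{L}_{C_0}$. Its local systems come from the degree-zero action of the based-loop generators, its bounding cochain is $b_e=E(\overline{x}_e)\otimes p_e$, admissibility follows from Lemma~\ref{lem immersed Lagrangian}, and the Yoneda image is read off from the twisted complex of Remark~\ref{rmk twisted complex}.

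You are more explicit than the paper, which leaves the reflection-of-generation step implicit and calls the identification $\cY(L,\rho,b)\simeq E$ an ``easy computation''. For that identification, the more reliable of your two options is the direct computation with the compact $L$ together with the $H_1(Q;\Z)$-splitting argument from the proof of Theorem~\ref{thm immersed module correspondence}. The claim ``only $X_v$ meets $L_v$'' ignores wrapped Reeb-chord contributions to the non-compact pieces $X_v$, $X_e$, $Y_{e,v}$.
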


\begin{rmk}\label{rmk cores generate compact Lagrangians}
	Theorem \ref{thm compact Fukaya} recovers several well-known generation results in symplectic topology:
	For cotangent bundles, the compact Fukaya category is generated by the zero section equipped with local systems, as implied by, for example, \cite{nad-zas09,nad09}.
	For plumbings of $T^*S^n$ with $n\geq 6$ along trees, \cite[Theorem 1.1]{abo-smi12} showed that the compact Fukaya category is generated by Lagrangian cores equipped with local systems.
	Theorem \ref{thm compact Fukaya} extends these examples to plumbings along arbitrary quivers satisfying the non-positivity condition.
	More precisely, in the presence of degree-$0$ cycles, additional generators appear, namely unions of Lagrangian cores equipped with bounding cochains determined by the degree-$0$ cycles.
\end{rmk}

\begin{proof}[Proof of Theorem \ref{thm compact Fukaya}]
	It is enough to show that for a randomly given $E \in \cC_{(C,0)}$, we can construct $(L^E,\rho^E,b^E) \in \mathcal{L}_{C_0}$ such that $\cY(L^E,\rho^E,b^E) = E$. 
	
	It is easy to construct Lagrangian submanifold $L$ (possibly immersed) as 
	\[L^E = \bigcup_{v \in V(I_C)} M_v.\]
	
	We also note that $E$ is concentrated at degree $0$. 
	Thus, for every $v \in V(I_C)$ and a proper $i$, $E(\bar{\alpha}_v^i): E(L_v) \to E(L_v)$ can be a nonzero linear map only if the degree of $\bar{\alpha}_v^i$ is zero. 
	(See Notation \ref{notation morphisms} for the notation $\bar{\alpha}_v^i$.) 
	Since $\alpha_v^i$ is a generating morphisms of the based loop space of $M_v \ \text{pt}$, we can see that the nonzero linear maps $E(\bar{\alpha}_v^i)$ defines a local system on $M_v$. 
	Let $\rho_v^E$ denote the constructed local system on $M_v$. 
	Then, we set $\rho^E := \oplus_{v \in V(I_C)} \rho_v^E$. 
	
	We point out that from the definition, one has 
	\[\hom_\cW^*\left(M_v^{\rho_v^E}, M_w^{\rho_w^E}\right) = \hom^0\left(E(L_v), E(L_w)\right) \otimes \hom_\cW^*\left(M_v, M_w\right),\]
	for any $v, w \in V(Q)$. 
	
	Now, for an arrow $e = t \to h \in E(Q)$, let $p_e$ denote the plumbing point in $P$, which is corresponding to the arrow $e$.
	Then, as a morphism from $M_h$ to $M_t$, $p_e$ is of degree $(1-d_e)$. 
	If $e = t \to h \in E(C) = E(I_C)$ with $d_e =0$, then 
	\[E(\bar{x}_e) \in \hom^0\left(E(L_h), E(L_t)\right).\]
	We set 
	\[b_e^E:= E(\bar{x}_e) \otimes p_e \in \hom^0\left(E(L_h), E(L_t)\right) \otimes \hom_\cW^1\left(M_h, M_t\right) = \hom_\cW^1\left(M_h^{\rho_h^E}, M_t^{\rho_t^E}\right).\]
	And, let $b^E := \oplus_{e \in E(I_C), d_e=0} b_e^E$. 
	Then, it is easy to show that $b^E$ satisfies the Maurer-Cartan equation, since (higher-)products between plumbing points $p_e$ vanish.
	In other words, $b^E$ could be a bounding cochain.
	
	We constructed an immersed Lagrangian with a local system and a bounding cochain $\left(L^E, \rho^E, b^E\right)$.
	Thanks to Lemma \ref{lem immersed Lagrangian}, the constructed $\left(L^E, \rho^E, b^E\right)$ is an admissible object in $\cW$. 
	Thus, it is also an object of $\cF$. 
	
	Moreover, the proof of Lemma \ref{lem immersed Lagrangian} shows that the constructed Lagrangian is equivalent to a twisted complex consisting of $X_v, X_e$, and $Y_{e,v}$.
	See Remark \ref{rmk twisted complex}.
	An easy computation shows that the Yoneda image of the twisted complex is equivalent to the starting proper module $E$.
\end{proof}

\section{Applications and examples}
\label{section applications and examples}

In the section, we discuss applications of our main results, especially, Theorems \ref{thm intro generation} and \ref{thm intro compact Lagrangian}. 
We discuss mainly two topics, {\em Bridgeland stability conditions} and {\em cluster category}, which are related to the categories of our interests, i.e., generalized Ginzburg dg algebras in representation theory and Fukaya categories of plumbings in symplectic topology. 
We also give example computations for previous sections in the last subsection. 

Before starting the section, we note that in Section \ref{section applications and examples}, we use different notations from the previous sections. 
The reason of the notational change is that we would like to use similar notations with previous works in representation theory, for example \cite{kel-yan11, iya-yan18}.
(On the other hand, in the previous sections, we used the word {\em wrapped Fukaya category} and $\cW$ denoting it, in order to emphasize the connection to symplectic topology.)
We first set the notations we will use in Section \ref{section applications and examples}.

Let $\mathcal{G}$ be a generalized Ginzburg dg algebra which comes from wrapped Fukaya categories of plumbing spaces, introduced in Theorem \ref{thm wrapped Fukaya category}. 
In this section, we consider the derived perfect module category over $\mathcal{G}$ (which is equivalent to the wrapped Fukaya category $\cW$ of the corresponding plumbing space, by Theorem \ref{thm wrapped Fukaya category}) and the derived proper module category over $\mathcal{G}$ (which is equivalent to $\Prop \cW$). 
We denote them by $\mathrm{per}(\mathcal{G})$ and $D_{\mathrm{fd}}(\mathcal{G})$, respectively.

In Section \ref{sec:stab}, we construct specific stability conditions on the derived proper module category $D_{\mathrm{fd}}(\mathcal{G})$. 
Thanks to Theorem \ref{thm compact Fukaya}, it is equivalent to construct a stability condition on the compact Fukaya category of a plumbing space. 

In Section \ref{sec:CYtri}, we prove that if $\mathcal{G}$ is Jacobi-finite (see Definition \ref{dfn Jacobi-finite}), $\mathrm{per}(\mathcal{G}), D_{\mathrm{fd}}(\mathcal{G})$ and $\mathcal{G}$ form a Calabi--Yau triple. 
Again, thanks to Theorems \ref{thm wrapped Fukaya category} and \ref{thm compact Fukaya}, for some plumbing spaces (satisfying the Jacobi-finite condition), the wrapped Fukaya category, the compact Fukaya category, and the endomorphism algebra of the selected generator (given by the union of cocores) for a Calabi--Yau triple. 
This is a generalization of the result in \cite{Bae-Jeong-Kim25}, which showed the same statement for the special case when the underlying graph of $Q$ is a tree and $M_{v} = S^{n}$ $(n \geq 3)$ for all $v$.

\subsection{Stability conditions}
\label{sec:stab}

In this section, we construct explicit stability conditions on the derived proper module category $D_{\mathrm{fd}}(\mathcal{G})$. 
We note that Bridgeland \cite{bri07} defined the notion of stability conditions, and Kontsevich and Soibelman \cite{Kontsevich-Soibelman08} introduced a modified version. 
We start the section by recalling the original definition of Bridgeland \cite{bri07}, then we construct an explicit stability condition with respect to the original definition. 
After that, we also introduce the modified definition, and we show that the constructed stability condition also satisfies the modified definition. 
We end the section by briefly mentioning Joyce's conjecture \cite{joy15}. 
We note that the conjecture is about stability conditions on compact Fukaya category. 
Since our category $D_{\mathrm{fd}}(\mathcal{G})$ can be seen as the compact Fukaya category of plumbing spaces, the plumbings could be a test ground for studying Joyce's conjecture. 

Here is the original definition of \cite{bri07}.
\begin{dfn} \label{dfn:tristab}
	Let $\mathcal{T}$ be a triangulated category. A {\em Bridgeland stability condition} $\sigma$ on $\mathcal{T}$ is a pair $(Z, \mathcal{P})$ satisfying the following:
	\begin{enumerate}
		\item[(i)] $Z : K(\mathcal{T}) \to \mathcal{C}$ is a group homomorphism where $K(\mathcal{T})$ is the $K$-group of $\mathcal{T}$.
		\item[(ii)] $\mathcal{P}$ is a collection of full subcategories $\left\{\mathcal{P}(\phi)\right\}_{\phi \in \mathbb{R}}$ of $\mathcal{T}$.
		\item[(iii)] Given a nonzero object $X \in \mathcal{P}(\phi)$, $Z(X) = m(X) e^{i \pi \phi}$ for some $m(X) \in \mathbb{R}_{>0}$.
		\item[(iv)] $\mathcal{P}(\phi+1) = \mathcal{P}(\phi)[1]$ for all $\phi \in \mathbb{R}$.
		\item[(v)] For $X_{1} \in \mathcal{P}(\phi_{1})$ and $X_{2} \in \mathcal{P}(\phi_{2})$ where $\phi_{1} > \phi_{2}$, $\Hom_{\mathcal{T}}(X_{1},X_{2}) = 0$.
		\item[(vi)] For any nonzero object $X \in \mathcal{P}(\phi)$, there exists a collection of exact triangles
		\begin{equation}
			\begin{tikzcd}
				\label{eqn tower 2}
				X_0 = 0 \arrow[r] & X_1 \arrow[r] \arrow[d] & X_2 \arrow[r] \arrow[d] & \dots \arrow[r] & X_{\ell-1} \arrow[r] \arrow[d] & X_\ell = X \arrow[d] \\
				& Y_{1} \arrow[lu, dashed] & Y_2 \arrow[lu, dashed] & \dots & Y_{\ell-1} \arrow[lu, dashed] & Y_{\ell} \arrow[lu, dashed]
			\end{tikzcd}
		\end{equation}
		where $Y_{i} \in \mathcal{P}(\phi_{i})$ for all $1 \leq i \leq l$ such that $\phi_{1}>\phi_{2}>\dots>\phi_{l}$.
	\end{enumerate}
\end{dfn}

Bridgeland \cite{bri07} provided a construction of stability condition on a triangulated category $\mathcal{T}$, if $\mathcal{T}$ admits a {\em bounded t-structure}.
To introduce the construction, we define the following:
\begin{dfn}
	\label{dfn t-structure}
	Let $\mathcal{A}$ be an abelian category. 
	\begin{enumerate}
		\item A {\em stability function} on $\mathcal{A}$ is a homomorphism $Z:K(\mathcal{A}) \to \mathbb{C}$ such that $Z(E)$ lies in the strict upper half plane $H \subset \mathbb{C}$ for all nonzero object $E \in \mathcal{A}$.
		\item The number $\phi(E) := \frac{1}{\pi} \mathrm{arg} Z(E) \in (0,1]$ is called the {\em phase} of $E \neq 0$.
		\item A nonzero object $E \in \mathcal{A}$ is called {\em semi-stable} if for any nonzero subobject $F \subset E$, the inequality $\phi(F) \leq \phi(E)$ holds.
		\item For a nonzero object $E \in \mathcal{A}$, a {\em Harder--Narasimhan filtration} of $E$ is a finite chain of subobjects
		\[0 = E_{0} \subset E_{1} \subset \dots \subset E_{n-1} \subset E_{n} = E,\]
		such that any quotient $F_{i}:=E_{i}/E_{i-1}$ is semi-stable and the inequality $\phi(F_{1}) > \dots > \phi(F_{n})$ holds.
		\item A stability function $Z$ is said to have the {\em Harder--Narasimhan property} if every nonzero object has a Harder--Narasimhan filtration.
	\end{enumerate}
\end{dfn}

We also recall the definition of $t$-structure.

\begin{dfn}
	Let $\mathcal{T}$ be a triangulated category.
	\begin{enumerate}
		\item A {\em t-structure} is a pair of two subcategories $(\mathcal{T}^{\leq 0},\mathcal{T}^{\geq 0})$ satisfying the following properties.
		\begin{enumerate}
			\item[(i)] For $X \in \mathcal{T}^{\leq 0}$ and $Y \in \mathcal{T}^{\geq 0}$, $\mathrm{Hom}_{\mathcal{T}}(X[1],Y) = 0$.
			\item[(ii)] For $X \in \mathcal{T}^{\leq 0}$ and $Y \in \mathcal{T}^{\geq 0}$, $X[1] \in \mathcal{T}^{\leq 0}$ and $Y[-1] \in \mathcal{T}^{\geq 0}$
			\item[(iii)] For every object $Z \in \mathcal{T}$, there exists a triangle $X \to Z \to Y \to X[1]$ such that $X \in \mathcal{T}^{\leq 0}$ and $Y \in \mathcal{T}^{\geq 0}$.
		\end{enumerate}
		\item A $t$-structure $(\mathcal{T}^{\leq 0},\mathcal{T}^{\geq 0})$ is called {\em bounded} if for every object $X \in \mathcal{T}$, there exists an integer $m>0$ such that $X[m] \in \mathcal{T}^{\leq 0}$ and $X[-m] \in \mathcal{T}^{\geq 0}$.
	\end{enumerate}
\end{dfn}

We note that in \cite{Beuilinson-Bernstein-Deligne82}, it is known that the heart of a t-structure is an abelian category. 
Now, we can state the construction of Bridgeland.

\begin{thm}[{\cite[Theorem 5.3]{bri07}}]\label{thm:stabequiv}
	To give a stability condition on a triangulated category $\mathcal{T}$ is equivalent to giving a bounded t-structure on $\mathcal{T}$ and a stability function on its heart with the Harder--Narasimhan property.
\end{thm}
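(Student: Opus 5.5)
We prove the two directions separately: a stability condition yields a bounded t-structure together with a stability function on its heart having the Harder--Narasimhan property, and conversely such a pair yields a stability condition.

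\textbf{From a stability condition to a t-structure.} Given $\sigma=(Z,\mathcal{P})$, define for each interval $J\subset\mathbb{R}$ the subcategory $\mathcal{P}(J)$. It is the extension-closed full subcategory generated by the slices $\mathcal{P}(\phi)$ with $\phi\in J$. We then set
\[
\mathcal{T}^{\leq 0}:=\mathcal{P}\bigl((0,\infty)\bigr),\qquad \mathcal{T}^{\geq 0}:=\mathcal{P}\bigl((-\infty,1]\bigr).
\]
Axiom (i) of a t-structure follows from (v) of Definition~\ref{dfn:tristab}: a shift by one adds $1$ to the phase by (iv), so every phase of $X[1]$ exceeds every phase of $Y$. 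Axiom (ii) is immediate from (iv). For axiom (iii), take the tower (vi) of an object $Z$ and cut it at the first index whose phase is $\leq 0$. The octahedral axiom then produces a triangle $X\to Z\to Y$ with $X\in\mathcal{T}^{\leq 0}$ and $Y\in\mathcal{T}^{\geq 0}$. Boundedness holds because each tower has finitely many phases. The heart is $\mathcal{A}=\mathcal{P}((0,1])$, and $Z$ maps nonzero objects of $\mathcal{A}$ into the strict upper half plane by (iii) and additivity. The semistable objects of $\mathcal{A}$ are exactly the objects of the slices $\mathcal{P}(\phi)$ with $\phi\in(0,1]$. Short exact sequences in $\mathcal{A}$ are triangles, so the tower (vi) restricted to $\mathcal{A}$ is a filtration by subobjects, and this gives the Harder--Narasimhan property.

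\textbf{Converse.} Given a heart $\mathcal{A}$ and a stability function $Z$ with the Harder--Narasimhan property, first set $\mathcal{P}(\phi)$ for $\phi\in(0,1]$ to be the semistable objects of phase $\phi$. Extend to all $\phi\in\mathbb{R}$ by $\mathcal{P}(\phi+n)=\mathcal{P}(\phi)[n]$. Since $K(\mathcal{T})\cong K(\mathcal{A})$ for a bounded t-structure, $Z$ extends to $K(\mathcal{T})$. Conditions (iii) and (iv) then hold by construction. To build the tower (vi), start from the finite cohomological filtration of $X$ with respect to the bounded t-structure, whose factors are $H^i(X)[-i]$. Refine each factor by the Harder--Narasimhan filtration of $H^i(X)$ in $\mathcal{A}$, and splice everything with the octahedral axiom. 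The phases then strictly decrease: within one cohomology object this is the Harder--Narasimhan ordering, and between different cohomological degrees the shift by $[-i]$ separates the phase intervals.

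\textbf{Main obstacle.} The step I expect to be hardest is (v), the vanishing $\Hom(X_1,X_2)=0$ when $\phi_1>\phi_2$. If the two phases differ by at least $1$, this reduces to $\Hom^{<0}=0$ between objects of the heart, which the t-structure gives. If both phases lie in $(0,1]$, I would use the standard seesaw argument: the image of any nonzero map $X_1\to X_2$ is a quotient of $X_1$, so its phase is at least $\phi_1$, and a subobject of $X_2$, so its phase is at most $\phi_2<\phi_1$, a contradiction. The case where the phases lie in adjacent unit intervals with difference less than $1$ needs care, and there I would reduce to a map from an object of $\mathcal{A}$ to a shift of an object of $\mathcal{A}$ and apply the same seesaw argument.
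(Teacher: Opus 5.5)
The paper does not prove this statement; it only cites Bridgeland, and your outline is Bridgeland's own argument. The structure is sound, but two steps need repair, and one standard point is missing.

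\textbf{The case analysis in (v).} Your ``main obstacle'' paragraph misdiagnoses where the difficulty lies. Write $X_j=A_j[k_j]$ with $A_j$ semistable in $\mathcal{A}$ of phase in $(0,1]$ and $k_j\in\Z$. Then $\phi_1>\phi_2$ forces $k_1\geq k_2$, and $\Hom(X_1,X_2)\cong\Hom(A_1,A_2[k_2-k_1])$.
\begin{itemize}
\item If $k_1>k_2$, this is a negative-degree $\Hom$ between objects of the heart. It vanishes by the t-structure axioms, however close the phases are.
\item Only the case $k_1=k_2$ needs the seesaw argument, after shifting both objects into $\mathcal{A}$.
\end{itemize}
So your ``adjacent unit intervals'' case is actually the easy one. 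The seesaw argument you propose for it cannot be run: a map $A_1\to A_2[-1]$ is not a morphism of $\mathcal{A}$, so it has no image in $\mathcal{A}$ whose phase you could compare. Split the cases by integer parts of the phases, not by whether $\phi_1-\phi_2\geq 1$.

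\textbf{The forward direction.} You use, without proof, that every object of $\mathcal{P}(I)$ has all its Harder--Narasimhan phases in $I$. Three of your claims rest on this:
\begin{itemize}
\item the identification $\mathcal{P}((0,\infty))\cap\mathcal{P}((-\infty,1])=\mathcal{P}((0,1])$;
\item that the HN factors of an object of $\mathcal{A}$ lie in $\mathcal{A}$, which is what makes the tower a filtration by subobjects;
\item the identification of the semistable objects of $\mathcal{A}$ with the slices.
\end{itemize}
The fact follows from (v). The first map $X_1\to X$ of an HN tower is nonzero. Otherwise the triangle splits and $X_1$ is a direct summand of $\mathrm{Cone}(X_1\to X)[-1]\in\mathcal{P}((-\infty,\phi_1-1))$, which contradicts (v). Hence $\Hom(X_1,X)\neq 0$, and (v) forbids $\phi_1$ from lying above $I$. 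Dually, $\phi_\ell$ cannot lie below $I$.

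\textbf{Mutual inverseness.} ``Equivalent'' also requires checking that the two constructions are mutually inverse. In the converse, $\mathcal{P}((0,1])=\mathcal{A}$ holds by the HN property together with extension-closedness of $\mathcal{A}$. A bounded t-structure is determined by its heart, so the round trip recovers the original data.
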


We use Theorem \ref{thm:stabequiv} to construct a stability condition on the derived proper module category $D_{\mathrm{fd}}(\mathcal{G})$. 
As the first step, we recall that Lemma \ref{lem extra condition 1} gives a bounded t-structure on $D_{\mathrm{fd}}(\mathcal{G})$. 
To be more precise, let us recall Lemma \ref{lem extra condition 1} (1), i.e., that every $M \in D_{\mathrm{fd}}(\mathcal{G})$ admits a tower 
\begin{equation}
	\label{eqn t-structure}
	\begin{tikzcd}
		0 \arrow[r] & \ast \arrow[r] \arrow[d] & \dots  \arrow[r] & M \arrow[d] \\
		& M_1 \arrow[lu, dashed] & \dots           & M_k \arrow[lu, dashed]
	\end{tikzcd},
\end{equation}
such that 
\begin{itemize}
	\item $M_i$ is concentrated at degree $s_i$ and 
	\item $s_1 < s_2 < \dots < s_k$. 
\end{itemize}
Thus, the following defines a bounded t-structure of $D_{\mathrm{fd}}(\mathcal{G})$.
\begin{gather*}
	D_{\mathrm{fd}}(\mathcal{G})^{\leq 0} = \{ M \in D_{\mathrm{fd}}(\mathcal{G}) | \text{  In Equation \eqref{eqn t-structure}, $M_i$ is concentrated at $s_i \leq 0$ for all $i$} \}, \\
	D_{\mathrm{fd}}(\mathcal{G})^{\geq 0} = \{ M \in D_{\mathrm{fd}}(\mathcal{G}) | \text{  In Equation \eqref{eqn t-structure}, $M_i$ is concentrated at $s_i \geq 0$ for all $i$} \}.
\end{gather*}
We remark that the given t-structure is the standard t-structure on the derived category of a dg category, given in \cite[Proposition 3.1.1]{GLV22}.
Moreover, the heart of the above bounded t-structure consists of proper modules concentrated at the degree $0$. 

\begin{thm} \label{thm:stab}
	There exists a Bridgeland stability condition on $D_{\mathrm{fd}}(\mathcal{G})$.
\end{thm}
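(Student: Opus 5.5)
By Theorem \ref{thm:stabequiv}, it suffices to exhibit a bounded t-structure on $D_{\mathrm{fd}}(\mathcal{G})$ together with a stability function on its heart that has the Harder--Narasimhan property. For the t-structure we take the standard one described just above, coming from Lemma \ref{lem extra condition 1}. Its heart $\mathcal{A}$ consists of the proper modules concentrated at degree $0$. Such a module is the same thing as a finite-dimensional representation of the degree-$0$ part of $\mathcal{G}$, with all negative-degree generators acting by zero, as in the proofs of Lemmas \ref{lem extra condition 1} and \ref{lem extra condition 3}. So $\mathcal{A}$ is an abelian category in which every object has finite length, and the length is bounded by the total dimension $\sum_v \dim_k M(L_v)$.

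For the stability function we use dimension vectors. The map $M \mapsto \underline{\dim}(M) := (\dim_k M(L_v))_{v \in V(Q)}$ is additive on short exact sequences in $\mathcal{A}$, because exactness in the heart is checked objectwise on the values $M(L_v)$. It therefore induces a homomorphism $K(\mathcal{A}) \to \mathbb{Z}^{V(Q)}$. We then choose, for each vertex $v$, a complex number $z_v$ in the strict upper half plane $H$ and set $Z(M) := \sum_v \dim_k M(L_v)\, z_v$. A nonzero $M$ has nonzero, nonnegative dimension vector, so $Z(M)$ is a positive combination of points in $H$ and hence lies in $H$. Thus $Z$ is a stability function on $\mathcal{A}$.

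The Harder--Narasimhan property then follows from Bridgeland's criterion (\cite[Proposition 2.4]{bri07}): a stability function on an abelian category of finite length has the HN property. The reason is that such a category is both noetherian and artinian, so chains of subobjects and quotients stabilize, and the required finiteness conditions on phases hold automatically.

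The main technical point is confirming that $\mathcal{A}$ really has finite length and that its short exact sequences are objectwise exact. For this we would use the explicit description of degree-$0$ modules from the proofs in Section \ref{subsection simpler modules}: subobjects there are sub-representations, exactly as the submodules $M_1$ and $M_e$ were constructed. Strictly decreasing chains then strictly decrease total dimension, which gives finite length. For a more refined choice, one could use the partial order of Lemma \ref{lem partial order 1} to pick the $z_v$ with phases decreasing along $\preceq$. This is not needed for existence, but it gives stable objects adapted to the blocks $V_s(Q)$.
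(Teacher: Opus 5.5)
Your proof is correct, but it gets the Harder--Narasimhan property by a different route from the paper.

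Both arguments apply Theorem~\ref{thm:stabequiv} to the standard t-structure whose heart consists of the degree-$0$ proper modules. The differences are in the choice of $Z$ and in how the HN property is proved.

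\textbf{The paper's route.} The paper takes $Z$ constant on the blocks $V_s(Q)$ of the relation $\sim$, with $\arg(z_i)$ weakly decreasing along the partial order of Lemma~\ref{lem partial order 1}. It then gets the HN property by ``rearranging'' the tower of Lemma~\ref{lem extra condition 2}. Made precise, this works as follows. Degree-$0$ morphisms only map a block to blocks below or equal to it in $\preceq$, so for every down-closed set of blocks the corresponding part of $M$ is a submodule. Filtering $M$ by the blocks of phase $\geq\phi$ gives a filtration whose factors are each supported on blocks of a single phase. Such factors are automatically semistable, since every subobject has that same phase.

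This route yields an explicit HN filtration: its factors are just block restrictions of $M$. The cost is that it needs the monotonicity constraint on the $z_i$. Also, the paper leaves implicit both the reordering of incomparable blocks and the merging of blocks with equal phase.

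\textbf{Your route.} You use an arbitrary $z_v\in H$ for each vertex and Bridgeland's chain-condition criterion for a finite-length heart. This needs no partial order at all, and it shows that every $Z$ factoring through the dimension vector works. The price is that you do not describe the HN factors or the semistable objects.

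\textbf{One small precision.} The heart is the category of finite-dimensional modules over $H^0(\mathcal{G})$, not over the bare degree-$0$ part. The dg-functor condition forces $M(d\overline{f})=0$ for degree $-1$ generators $\overline{f}$. This does not affect your argument. Additivity of $\dim_k M(L_v)$ on short exact sequences in the heart also follows directly from the long exact cohomology sequence of the corresponding triangle evaluated at $L_v$.
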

\begin{proof}
	Let $\mathcal{H}$ be the heart of the above bounded t-structure on $D_{\mathrm{fd}}(\mathcal{G})$, i.e., $\mathcal{H} = D_{\mathrm{fd}}(\mathcal{G})^{\leq 0} \cap D_{\mathrm{fd}}(\mathcal{G})^{\geq 0}$.
	We would like to find a stability function with the Harder--Narasimhan property on the abelian category $\mathcal{H}$.
	Then, by Theorem \ref{thm:stabequiv}, we get a stability condition on $D_{\mathrm{fd}}(\mathcal{G})$.
	
	To define a stability function $Z$, we use the partition of $V(Q)$(, where $Q$ is the quiver associated to $\mathcal{G}$) defined in Definition \ref{dfn equivalence relation 1}.
	We recall that two vertices $v, w \in V(Q)$ are equivalent, i.e., $v \sim w$ if and only if there exist nonzero degree $0$ morphisms $\overline{f} \in \hom_{\cW^{op}}^0(L_v,L_w)$ and $\overline{g} \in \hom_{\cW^{op}}^0(L_w,L_v)$.
	Then, the equivalence relation defines a partition of $V(Q)$ with index set $I$, i.e., 
	\[V(Q)= \sqcup_{s \in I} V_s(Q).\]
	We also recall that $I$ could have a partial order $\preceq$ by Lemma \ref{lem partial order 1}.
	
	Let $I = \{ s_1, \dots, s_l \}$. 
	For simplicity, we assume that $s_i \preceq s_j$, then $i < j$.  
	Now, we assign a complex number $z_i$ to $s_i$ for all $i$ such that 
	\begin{itemize}
		\item if $s_i \preceq s_j$ then $\mathrm{arg}(z_i) \geq \mathrm{arg}(z_j)$, and 
		\item $0 < \mathrm{arg}(z_i) \leq \pi$.
	\end{itemize}
	
	We finally define our stability function.
	For $M \in \mathcal{H}$, we set 
	\begin{gather}
		\label{eqn central charge}
		Z(M) = \sum_{s_i \in I} \left(\sum_{v \in V_{s_i}(Q)} \dim M(L_v)\right) z_i.
	\end{gather}
	We note that, as mentioned in Remark \ref{rmk HMod}, we are assuming that $M$ is of the form $\oplus_d H^d(M)[-d]$, and since $M$ is concentrated at the degree $0$, $M = H^0(M)$. 
	Thus, two equivalent proper modules in $\mathcal{H}$ have the same function value. 
	Moreover, it is trivial that $Z$ can be seen as a linear map on the Grothendieck group $K(\mathcal{H})$.
	Lastly, the Harder--Narasimhan property of $Z$ follows from Lemma \ref{lem extra condition 2}. 
	More precisely, the tower in Equation \eqref{eqn extra condition 2} can be rearranged as the Harder--Narasimhan filtration with respect to the choice of $z_i$. 
\end{proof}

As mentioned at the beginning of the section, \cite{Kontsevich-Soibelman08} provides a modified version of Definition \ref{dfn:tristab}.
We first introduce the modified version and we show that the constructed stability conditions in Theorem \ref{thm:stab} satisfy the definition of \cite{Kontsevich-Soibelman08}.
We also discuss its relation to Joyce's conjecture \cite{joy15} at the end of Section \ref{sec:stab}.
\begin{dfn}
	\label{dfn modified stability condition}
	Let $\mathcal{T}$ be a triangulated category, $\Gamma$ be a finite rank free abelian group, and $\mathrm{cl}: K(\mathcal{T}) \to \Gamma$ be a group homomorphism. 
	A {\em stability condition} $\sigma = \left(Z_\Gamma, \mathcal{P}\right)$ consists of 
	\begin{itemize}
		\item a group homomorphism $Z_\Gamma: \Gamma \to \mathbb{C}$, and 
		\item a collection of full additive subcategories $\mathcal{P}= \{\mathcal{P}(\phi)\}_{\phi \in \mathbb{R}}$ of $\mathcal{T}$,
	\end{itemize}
	such that $(Z:=Z_\Gamma \circ \mathrm{cl}, \mathcal{P})$ is a Bridgeland stability condition defined in Definition \ref{dfn:tristab}.
\end{dfn}

The main difference between Definitions \ref{dfn:tristab} and \ref{dfn modified stability condition} is that the second one is using a fixed free abelian group $\Gamma$, which does not appear in the first one. 
It would be a natural question to ask a choice of $\Gamma$.

In symplectic topology, if one believes the conjecture of Joyce, there exists a natural choice of $\Gamma$. 
To be more precise, let us recall that Joyce \cite{joy15} conjectured that a holomorphic volume form $\Omega$ on a symplectic manifold $X$ defines a stability condition in its Fukaya category.
Especially, special Lagrangians with respect to $\Omega$ would correspond to the semistable objects, and the central charge function is defined as follows: 
\[Z(L) = \int_L \Omega.\]

We recall that $\Omega$ is a holomorphic $n$-form where $n$ is the complex dimension of the symplectic manifold $X$. 
Thus, one can see $\int \Omega$ as a group homomorphism from the middle homology $H_n(X;\mathbb{Z})$ to $\mathbb{C}$. 
It implies that $H_n(X;\mathbb{Z})$ would be a natural choice for $\Gamma$ in Definition \ref{dfn modified stability condition}.

In our case, the corresponding symplectic manifold is a plumbing space. 
And, using the Mayer--Vietoris sequence, the above argument suggests to choose $\Gamma = H_n(P;\mathbb{Z}) = \mathbb{Z}^{|V(Q)|}$, with a natural function $\mathrm{cl}$. 
Then, one can easily construct a stability condition in the sense of Definition \ref{dfn modified stability condition}. 

\begin{cor}
	\label{cor modified stability condition}
	The exists a stability condition on $D_{\mathrm{fd}}(\mathcal{G})$.
\end{cor}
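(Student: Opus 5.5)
The plan is to upgrade the stability condition of Theorem \ref{thm:stab} by factoring its central charge through the lattice $\Gamma := \mathbb{Z}^{|V(Q)|}$. We identify $\Gamma$ with $H_n(P;\mathbb{Z})$ via the Mayer--Vietoris splitting, with basis the cores $[M_v]$. Define $\mathrm{cl}\colon K(D_{\mathrm{fd}}(\mathcal{G})) \to \Gamma$ by the dimension vector
\[
\mathrm{cl}(M) := \sum_{v \in V(Q)} \Big( \sum_{d} (-1)^d \dim_k H^d\big(M(L_v)\big) \Big) e_v .
\]
Geometrically this is the class $\sum_v (\mathrm{rk}\,\rho_v)[M_v]$ of a generator $(L,\rho,b)$ from Theorem \ref{thm immersed module correspondence}. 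Then set
\[
Z_\Gamma(e_v) := z_i \quad \text{for } v \in V_{s_i}(Q),
\]
with $z_i$ as in the proof of Theorem \ref{thm:stab}, and keep the same slicing $\mathcal{P}$.

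The steps, in order, are as follows.

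\textbf{Step 1: $\mathrm{cl}$ is a well-defined homomorphism.} The Euler characteristic of $M(L_v)$ is additive on exact triangles in $D_{\mathrm{fd}}(\mathcal{G})$, since evaluation at $L_v$ is an exact functor to $\Perf\,k$. It also depends only on the isomorphism class of $M$. Hence it descends to the Grothendieck group, and $\Gamma$ is free of finite rank because $Q$ is finite.

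\textbf{Step 2: the composite recovers the old central charge.} Check that $Z_\Gamma \circ \mathrm{cl}$ agrees with the central charge $Z$ of \eqref{eqn central charge} on the heart $\mathcal{H}$. For $M \in \mathcal{H}$ the module is concentrated in degree $0$, so the signs are all $+1$ and the two formulas coincide term by term. Since $\mathcal{H}$ is the heart of a bounded t-structure, $K(\mathcal{H}) \cong K(D_{\mathrm{fd}}(\mathcal{G}))$. Therefore the two homomorphisms agree on all of $K(D_{\mathrm{fd}}(\mathcal{G}))$.

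\textbf{Step 3: conclude.} The pair $(Z_\Gamma \circ \mathrm{cl}, \mathcal{P})$ is exactly the Bridgeland stability condition already built in Theorem \ref{thm:stab}. So $(Z_\Gamma, \mathcal{P})$ satisfies Definition \ref{dfn modified stability condition}.

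The only step needing real care is Step 2, namely that the heart's Grothendieck group surjects onto (indeed equals) $K$ of the whole category. I would deduce this from the tower of Lemma \ref{lem extra condition 1}. Each piece $M_i$ is a shift $N_i[-s_i]$ of an object $N_i$ of $\mathcal{H}$, so $[M] = \sum_i (-1)^{s_i}[N_i]$ in $K$.
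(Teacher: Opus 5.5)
Your proposal is correct and is essentially the paper's argument. The paper takes $\Gamma = H_n(P;\mathbb{Z}) \cong \mathbb{Z}^{|V(Q)|}$ with the natural dimension-vector map $\mathrm{cl}$, observes that the central charge \eqref{eqn central charge} factors through it, and keeps the slicing from Theorem~\ref{thm:stab}. Your version supplies the details the paper calls ``easy to check'': extending $\mathrm{cl}$ to all of $K(D_{\mathrm{fd}}(\mathcal{G}))$ via Euler characteristics, and using the tower of Lemma~\ref{lem extra condition 1} to identify $K(\mathcal{H})$ with $K(D_{\mathrm{fd}}(\mathcal{G}))$.
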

\begin{proof}
	It is easy to check that $Z$ defined in \eqref{eqn central charge} factors through $\mathrm{cl}:K(\mathcal{H}) \to \Gamma$. 
\end{proof}

\subsection{Calabi--Yau triples}
\label{sec:CYtri}

In \cite{iya-yan18}, they defined the notion of Calabi--Yau triples. 
In order to introduce the definition, we need to define the following terminologies:
Let $\mathcal{T}$ be a triangulated category.
A split-generator $M \in \mathcal{T}$ is called a {\em silting object} if it satisfies $\mathrm{Hom}_\mathcal{T}(M,M[d])=0$ for all $d>0$. 
Here, $\mathrm{Hom}_\mathcal{T}$ stands for a morphism space in the triangulated category $\mathcal{T}$. 
(In particular, when $\mathcal{T}$ is the derived category of a dg category $\mathcal{D}$, $\mathrm{Hom}_\mathcal{T}$ is the same as $\mathrm{Hom}^{0}_\mathcal{D}$.)
For any object $M \in \mathcal{T}$, we define two subcategories:
\begin{gather*}
	\mathcal{T}_M^{\leq 0} \coloneqq \{ X \in \mathcal{T} \,|\, \mathrm{Hom}_\mathcal{T}(M,X[d])=0 \text{ for all } d>0 \},\\
	\mathcal{T}_M^{\geq 0} \coloneqq \{ X \in \mathcal{T} \,|\, \mathrm{Hom}_\mathcal{T}(M,X[d])=0 \text{ for all } d<0 \}.
\end{gather*}

\begin{dfn}
	Let $\mathcal{T}$ be a split-closed triangulated category, $M \in \mathcal{T}$ be an object and $\mathcal{U} \subset \mathcal{T}$ be a thick subcategory.
	\begin{enumerate}
		\item The triple $(\mathcal{T},\mathcal{U},M)$ is called an {\em ST-triple} if it satisfies the following conditions:
		\begin{enumerate}
			\item[(i)] $M$ is a silting object and $\mathrm{Hom}_\mathcal{T}(M,X)$ is finite dimensional for every $X \in \mathcal{U}$.
			\item[(ii)] $(\mathcal{T}_M^{\leq 0},\mathcal{T}_M^{\geq 0})$ is a t-structure on $\mathcal{T}$.
			\item[(iii)] $\mathcal{T}_M^{\geq 0} \subset \mathcal{U}$ and $(\mathcal{U}_M^{\leq 0} = \mathcal{T}_M^{\leq 0} \cap \mathcal{U} ,\mathcal{U}_M^{\geq 0} = \mathcal{T}_M^{\geq 0} \cap \mathcal{U})$ is a bounded t-structure on $\mathcal{U}$.
		\end{enumerate}
		\item An ST-triple $(\mathcal{T},\mathcal{U},M)$ is called a {\em $n$-Calabi--Yau triple} ($n \geq 2$) if it is a ST-triple satisfying the following extra condition:
		\begin{enumerate}
			\item[(iv)] $(\mathcal{T},\mathcal{U})$ is {\em relative $n$-Calabi--Yau}: for $X \in \mathcal{U}, Y \in \mathcal{T}$,
			$$D\mathrm{Hom}_{\mathcal{T}}(X,Y) \simeq \mathrm{Hom}_{\mathcal{T}}(Y,X[n]).$$
		\end{enumerate}
	\end{enumerate}

\end{dfn}

The following is a standard example.

\begin{exa}[{\cite[Lemma 4.15]{amy19}}]\label{exa:CY}
	Let $R$ be a dg algebra satisfying the following conditions:
	\begin{enumerate}
		\item $H^d(R)=0$ for any positive integer $d>0$.
		\item $H^0(R)$ is finite-dimensional.
		\item $\mathrm{per}(R) \supseteq D_{\mathrm{fd}}(R)$.
	\end{enumerate}
	Then $(\mathrm{per}(R), D_{\mathrm{fd}}(R),R)$ is an ST-triple.
	If $R$ is also $n$-Calabi--Yau, then $(\mathrm{per}(R), D_{\mathrm{fd}}(R))$ is relative $n$-Calabi--Yau (see e.g., \cite[Corollary 2.4.2]{Bae-Jeong-Kim25}) and hence $(\mathrm{per}(R), D_{\mathrm{fd}}(R),R)$ is a $n$-Calabi--Yau triple.
\end{exa}

We note the following facts: 
It is well-known that a wrapped Fukaya category $\mathcal{W}$ is a (homologically) smooth and (non-compact) Calabi--Yau $A_{\infty}$-category, see \cite{gan13}. 
By definition, it implies that a generalized Ginzburg dg algebra $\mathcal{G}$ is also homologically smooth and $n$-Calabi--Yau where $2n$ is the dimension of a plumbing space. 
In particular, $\mathrm{per}(\mathcal{G}) \supseteq D_{\mathrm{fd}}(\mathcal{G})$ holds since $\mathcal{G}$ is homologically smooth, see \cite{kel08}. 
Thus, if $\mathcal{G}$ is Jacobi-finite (see Definition \ref{dfn Jacobi-finite}), then Example \ref{exa:CY} implies Theorem \ref{thm:CY}.

\begin{dfn}
	\label{dfn Jacobi-finite} 
	A generalized Ginzburg algebra $\mathcal{G}$ is {\em Jacobi-finite} if $H^0(\mathcal{G})$ is finite-dimensional.
\end{dfn}

\begin{thm} \label{thm:CY}
	Let $\mathcal{G}$ be a Jacobi-finite, generalized Ginzburg dg algebra associated to a non-positively graded plumbing data of dimension $n \geq 3$. Then, $(\mathrm{per}(\mathcal{G}), D_{\mathrm{fd}}(\mathcal{G}),\mathcal{G})$ is a $n$-Calabi--Yau triple.
\end{thm}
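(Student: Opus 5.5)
The plan is to reduce Theorem \ref{thm:CY} to Example \ref{exa:CY} with $R=\mathcal{G}$, regarded as a dg algebra via the direct sum of its objects $\bigoplus_{v\in V(Q)} L_v$. This is legitimate since $V(Q)$ is finite. We then check the hypotheses of Example \ref{exa:CY} one at a time, together with the $n$-Calabi--Yau property.

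\textbf{Vanishing of positive cohomology.} By the non-positivity condition \eqref{eqn non-positively grading condition}, every generating morphism of $\mathcal{G}$ in Theorem \ref{thm wrapped Fukaya category} has non-positive degree: $|x_e|=d_e\le 0$, $|y_e|=2-n-d_e<0$ and $|h_v|=1-n<0$. The generators of $C_{-*}(\Omega(M_v\setminus\mathrm{pt}))$ are also in non-positive degree, because homological grading is negated. Since $\mathcal{G}$ is semifree on these generators, $\mathcal{G}$ is concentrated in non-positive degrees as a complex, so $H^d(\mathcal{G})=0$ for all $d>0$. This settles condition (1).

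\textbf{Finite-dimensionality of $H^0$.} Condition (2) is exactly the Jacobi-finite hypothesis of Definition \ref{dfn Jacobi-finite}.

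\textbf{The inclusion $D_{\mathrm{fd}}\subseteq\mathrm{per}$ and the Calabi--Yau property.} For condition (3) we use homological smoothness of $\mathcal{G}$. This comes either from the wrapped Fukaya category via \cite{gan13}, transported through Theorem \ref{thm wrapped Fukaya category}, or from \cite[Proposition 1.37]{Keller-Wang23} as recorded in Remark \ref{rmk nonpositively grading condition}(2). For a smooth dg category, every module with finite-dimensional total cohomology is perfect \cite{kel08}, so $\mathrm{per}(\mathcal{G})\supseteq D_{\mathrm{fd}}(\mathcal{G})$. For the Calabi--Yau dimension, we use the (non-compact) Calabi--Yau structure on the wrapped Fukaya category of the $2n$-dimensional Weinstein manifold $P$ \cite{gan13}. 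This is a smooth $n$-Calabi--Yau structure, i.e.\ an isomorphism between the inverse dualizing bimodule and the diagonal shifted by $-n$. Since $\mathrm{per}(\mathcal{G})\simeq\mathcal{W}(P;d_e)$ (Theorem \ref{thm wrapped Fukaya category}), $\mathcal{G}$ is $n$-Calabi--Yau.

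Once these are in place, Example \ref{exa:CY} yields the ST-triple property, and the relative $n$-Calabi--Yau duality $D\Hom(X,Y)\simeq\Hom(Y,X[n])$ for $X\in D_{\mathrm{fd}}$ and $Y\in\mathrm{per}$ follows from \cite[Corollary 2.4.2]{Bae-Jeong-Kim25}. Together these give condition (iv), so $(\mathrm{per}(\mathcal{G}),D_{\mathrm{fd}}(\mathcal{G}),\mathcal{G})$ is an $n$-Calabi--Yau triple.

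\textbf{Main obstacle.} The delicate step is justifying the $n$-Calabi--Yau property for the generalized algebra $\mathcal{G}$, not just for the usual Ginzburg algebra. The issue is that \cite{gan13} is stated for wrapped Fukaya categories with the standard grading, while here the grading structure $\{d_e\}$ is nontrivial. I would first check that the grading data merely twists gradings and does not affect the duality isomorphism. Failing that, I would verify the Calabi--Yau structure algebraically, as Keller does for Ginzburg algebras: extend the standard Ginzburg argument by the $n$-Calabi--Yau structure of $C_{-*}(\Omega M_v)$, where $h_v$ plays the role of the fundamental class, using the semifree extension description of Theorem \ref{thm wrapped Fukaya category}.
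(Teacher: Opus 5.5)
Your proposal is correct and follows the paper's route. The paper also reduces to Example~\ref{exa:CY}. In the paragraph preceding the theorem it takes homological smoothness, $\mathrm{per}(\mathcal{G})\supseteq D_{\mathrm{fd}}(\mathcal{G})$ and the $n$-Calabi--Yau property from \cite{gan13} and \cite{kel08}, and condition (2) from Jacobi-finiteness. Its proof then checks only condition (1), via the non-positive grading, exactly as you do.

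The paper does not address the grading-structure concern you flag; it simply transports Ganatra's Calabi--Yau structure through Theorem~\ref{thm wrapped Fukaya category}. The concern should be harmless, since the shift $n$ in that duality should not depend on the choice of grading structure.
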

\begin{proof}
	We need to check the condition (1) in Example \ref{exa:CY} only. 
	It is trivial since the plumbing data is non-positively graded.
\end{proof}

Let $(Q, M, \s)$ be a plumbing data equipped with a non-positively grading $\{d_e\}$.
Now, we investigate a geometric condition which implies the Jacobi-finiteness of the corresponding generalized Ginzburg dg algebra $\mathcal{R}$. 

Thanks to Theorem \ref{thm wrapped Fukaya category}, the degree $0$ part of $\mathcal{G}$ is dependent on the following two factors:
\begin{itemize}
	\item $C_{0}(\Omega_p (M_v\setminus\text{pt}))$ (that determines $\alpha_v$ morphisms in Notation \ref{notation morphisms}), and
	\item the degree $0$ cycles of $Q$. 
\end{itemize}
Thus, Corollary \ref{cor CY triple} is straightforward. 
\begin{cor} 
	\label{cor CY triple}
	Let $(Q, M, \mathrm{sgn}, d_{e})$ be a plumbing data and $P$ be the resulting plumbing space of dimension $2n$.
	Let $L_v$ denote the cotangent fiber of $T^*M_v \subset P$.
	If the following two hold, 
	\begin{itemize}
		\item $|\pi_1(M_v)| < \infty$, and
		\item $Q$ has no degree $0$ cycle,
	\end{itemize}
	$\left(\cW(P;d_e), \cF(P;d_e), \oplus_v L_v\right)$ is a Calabi--Yau triple.
\end{cor}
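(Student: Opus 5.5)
The plan is to reduce Corollary \ref{cor CY triple} to Theorem \ref{thm:CY}. Under the identifications $\mathrm{per}(\mathcal{G})\simeq\cW(P;d_e)$ from Theorem \ref{thm wrapped Fukaya category} (with $\mathcal{G}$ corresponding to $\oplus_v L_v$) and $D_{\mathrm{fd}}(\mathcal{G})\simeq\Prop\,\cW(P;d_e)\simeq\cF(P;d_e)$ from Theorems \ref{thm immersed module correspondence} and \ref{thm compact Fukaya}, it is enough to show that the two hypotheses make $\mathcal{G}$ Jacobi-finite, i.e.\ $\dim_k H^0(\mathcal{G})<\infty$. Theorem \ref{thm:CY} then yields the Calabi--Yau triple, once we check that the identifications carry $\mathcal{G}$ to the generator $\oplus_v L_v$ and $D_{\mathrm{fd}}$ to $\cF$.

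\textbf{Computing $H^0(\mathcal{G})$.} All generators are non-positively graded. Hence $H^0(\mathcal{G})$ is the quotient of the degree-$0$ part of the path category by the images under $d$ of degree $(-1)$ elements. Paths of degree $0$ are built only from degree-$0$ generators. These are the degree-$0$ chains of $C_{-*}(\Omega(M_v\setminus\mathrm{pt}))$ at each vertex, together with the $x_e$ having $d_e=0$. Each $y_e$ and each $h_v$ has strictly negative degree, using $|y_e|=2-n-d_e<0$ from \eqref{eqn non-positively grading condition}.

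The argument would proceed in three steps.

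\emph{Step 1 (vertex contributions).} Modulo boundaries from the loop-space generators, the degree-$0$ endomorphisms at $v$ reduce to $H_0(\Omega(M_v\setminus\mathrm{pt}))=k[\pi_1(M_v\setminus\mathrm{pt})]$. By van Kampen with $n\ge 3$, this is $k[\pi_1(M_v)]$, which is finite-dimensional because $|\pi_1(M_v)|<\infty$. The $d h_v$ relations only impose further quotients, and $\eta_v(z)$ has degree $2-n<0$, so these relations cannot increase dimension.

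\emph{Step 2 (arrows).} A degree-$0$ path is an alternating word of group-algebra elements and arrows $x_e$ with $d_e=0$. By Remark \ref{rmk degree 0 cycle}, any closed subword of $x_e$'s traces a degree-$0$ cycle. If $Q$ has no degree-$0$ cycle, the degree-$0$ arrows form an acyclic subquiver, so every such word contains at most $|V(Q)|-1$ arrows. There are finitely many arrow sequences, and between arrows we insert elements of the finite-dimensional algebras from Step 1. Therefore the degree-$0$ part of the path category, and hence $H^0$, is finite-dimensional.

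\emph{Step 3 (the triple).} With Jacobi-finiteness established, apply Theorem \ref{thm:CY} and then transport the triple through the equivalences above.

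\textbf{Main obstacle.} I expect the delicate point to be Step 1. The algebra $C_{-*}(\Omega(M_v\setminus\mathrm{pt}))$ is only given abstractly as a semifree dg algebra, so the claim that its degree-$0$ cohomology contributes exactly $k[\pi_1]$ inside the larger category needs justification: boundaries from mixed degree $(-1)$ paths must not enlarge anything. This holds because passing to the quotient can only shrink the space, so the upper bound from Step 2 suffices. For that reason I would phrase the whole argument as an upper bound on $\dim H^0$ rather than as an exact computation.
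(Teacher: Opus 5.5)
Your proposal is correct and follows the paper's route. The two hypotheses give Jacobi-finiteness: finite $\pi_1(M_v)$ bounds the loop-space contribution to $H^0$ at each vertex, and the absence of degree-$0$ cycles forces only finitely many degree-$0$ arrow sequences. Theorem \ref{thm:CY}, together with the identifications $\mathrm{per}(\mathcal{G})\simeq\cW(P;d_e)$ and $D_{\mathrm{fd}}(\mathcal{G})\simeq\cF(P;d_e)$, then finishes the proof, and your version spells out details the paper leaves implicit.

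One wording fix in Step 2: the chain-level degree-$0$ part of the path category is generally infinite-dimensional (for a semifree model of $C_{-*}(\Omega(M_v\setminus\mathrm{pt}))$ it is a free algebra once $\pi_1(M_v)\neq 1$). So the correct claim is that $H^0(\mathcal{G})$ is spanned by the classes of finitely many words, which is exactly the upper-bound formulation in your final paragraph.
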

\begin{proof}
	We note that if $|\pi_{1}(M_{v})| < \infty$, then there are only finitely many morphisms $\alpha_v$ (from $ C_{0}(\Omega_p (M_v\setminus\text{pt}))$).
	Thus, the corresponding generalized Ginzburg dg algebra is Jacobi-finite, and Theorem \ref{thm:CY} completes the proof.
\end{proof}

As another corollary, we can construct infinitely many generalized cluster categories.

\begin{cor}
	Under the setting of Theorem \ref{thm:CY}, the Verdier quotient $\mathrm{per}(\mathcal{G})/D_{\mathrm{fd}}(\mathcal{G})$ is a generalized cluster category.
\end{cor}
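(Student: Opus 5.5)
The statement to prove is that, under the setting of Theorem \ref{thm:CY}, the Verdier quotient $\mathrm{per}(\mathcal{G})/D_{\mathrm{fd}}(\mathcal{G})$ is a generalized cluster category. The plan is to use the general machinery of Calabi--Yau triples from \cite{iya-yan18}. By their main theorem, if $(\mathcal{T},\mathcal{U},M)$ is an $n$-Calabi--Yau triple, then the Verdier quotient $\mathcal{T}/\mathcal{U}$ has the following properties: it is Hom-finite, it is $(n-1)$-Calabi--Yau, and the image of $M$ is an $(n-1)$-cluster tilting object. This is exactly the defining property of a generalized ($(n-1)$-)cluster category in the sense of Amiot and Guo. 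The key input is therefore Theorem \ref{thm:CY}, which we take as given: under the Jacobi-finite hypothesis and non-positive grading, $(\mathrm{per}(\mathcal{G}), D_{\mathrm{fd}}(\mathcal{G}), \mathcal{G})$ is an $n$-Calabi--Yau triple.

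The steps are as follows.
\begin{enumerate}
\item Invoke Theorem \ref{thm:CY} to obtain the $n$-Calabi--Yau triple.
\item Apply \cite{iya-yan18} to deduce that $\mathrm{per}(\mathcal{G})/D_{\mathrm{fd}}(\mathcal{G})$ is Hom-finite and $(n-1)$-Calabi--Yau, with $\mathcal{G}$ mapping to an $(n-1)$-cluster tilting object whose endomorphism algebra is $H^0(\mathcal{G})$.
\item Identify this conclusion with the definition of a generalized cluster category, for instance as in Guo's construction of $\mathrm{per}(\Gamma)/D_{\mathrm{fd}}(\Gamma)$ for a connective, homologically smooth, $n$-Calabi--Yau dg algebra with $H^0$ finite-dimensional.
\end{enumerate}

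For the alternative route through Guo's framework, the hypotheses must be verified directly, and each is already recorded earlier in the paper.
\begin{itemize}
\item Connectivity, i.e.\ $H^{d}(\mathcal{G})=0$ for $d>0$, follows because all generating morphisms have non-positive degree (Remark \ref{rmk nonpositively grading condition}).
\item Homological smoothness and the $n$-Calabi--Yau property come from $\mathcal{G}$ being a model of the wrapped Fukaya category \cite{gan13}.
\item Finite-dimensionality of $H^0(\mathcal{G})$ is exactly the Jacobi-finite assumption.
\end{itemize}

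The main obstacle is a bookkeeping issue rather than a substantive one. The Calabi--Yau structure on $\mathcal{G}$ must be the bimodule (left) Calabi--Yau structure that Guo and Iyama--Yang require, not merely the relative Serre duality appearing in condition (iv). The plan is to cite that the wrapped Fukaya category of a Weinstein manifold carries a smooth Calabi--Yau structure in the bimodule sense (Ganatra), which transfers across the Morita equivalence of Theorem \ref{thm wrapped Fukaya category}. One also needs $n\ge 3$ so that the quotient is at least $2$-Calabi--Yau, and this is guaranteed by the standing assumption.
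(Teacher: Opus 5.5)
Your proposal is correct and follows the paper, whose proof simply combines Theorem \ref{thm:CY} with \cite[Section 5.3]{iya-yan18}; your steps (1)--(3) spell out that citation, and the Guo route you add is an alternative check of the same conclusion. One small correction: the bimodule Calabi--Yau structure you flag as the main obstacle is needed only for the Guo route or for proving Theorem \ref{thm:CY} itself. Iyama--Yang's reduction for a Calabi--Yau triple uses only the relative Calabi--Yau condition (iv), which Theorem \ref{thm:CY} already supplies.
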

\begin{proof}
	This follows from \cite[Section 5.3]{iya-yan18}.
\end{proof}
We note that the Verdier quotient $\mathrm{per}(\mathcal{G})/D_{\mathrm{fd}}(\mathcal{G})$ can be seen as a quotient $\cW(P;d_e) / \cF(P;d_e)$, where $P$ is the plumbing space satisfying the conditions of Corollary \ref{cor CY triple}.
If $\cW(P;d_e)$ and $\cF(P;d_e)$ are Koszul dual to each other, then \cite[Corollary 1.4]{Ganatra-Gao-Venkatesh22} provides a geometric model of the cluster category, which is the Rabinowitz Fukaya category of the plumbing space.

\subsection{Example}
\label{subsection example}
We end the article by giving an example.

We first fix our field $\mathbb{C}$.
Then, we fix plumbing data with grading $(Q,M,\s, d_e)$ for the example. 
The quiver $Q$ is the following: 
\[\begin{tikzcd}
	& z & \\
	x \ar[rr,"a", bend left=10] \ar[ur, "c"]  & & y \ar[ll,"b",bend left=10] \ar[ul, "d"'].
\end{tikzcd}\]
For all vertices $x, y, z$, we assign spheres $S^n$ with $n \geq 3$. 
For all arrows $a, b, c, d$, we assign degree $0$ and the positive signs. 

The corresponding algebra is the usual Ginzburg dg algebra associated to $Q$:
We note that the all assigned manifolds are the sphere, so there is no $\alpha_v$ morphisms. 
Let $\Gamma$ denote the associated Ginzburg dg algebra. 
We also note that the quiver $Q$ has a cycle $C_0:=ab$ that makes $\Gamma$ to be non Jacobi-finite. 

We recall the notation for the generating collection in Theorem \ref{thm generation of proper modules}:
The collection is 
\[\cup_{C \in \mathbb{B}} \cC_{(C,0)}.\]
It is easy to check that $\mathbb{B}$ for this example is $\{x, y, z, C_0\}$. 

It is also easy to check that each of $\cC_{(x,0)}, \cC_{(y,0)}, \cC_{(z,0)}$ has a unique simple module, given as follows:
\[M_x = \begin{tikzcd}
	& 0 \ar[dr] \ar[dl]& \\
	\mathbb{C} \ar[rr, bend right=10]  & & 0 \ar[ll,bend right=10] 
\end{tikzcd},
M_y = \begin{tikzcd}
	& 0 \ar[dr] \ar[dl]& \\
	0 \ar[rr, bend right=10]  & & \mathbb{C} \ar[ll,bend right=10]
\end{tikzcd},
M_z = \begin{tikzcd}
	& \mathbb{C} \ar[dr] \ar[dl]& \\
	0 \ar[rr, bend right=10]  & & 0 \ar[ll,bend right=10]
\end{tikzcd}.\]

Similarly, it is easy to check that $\cC_{(C_0,0)}$ has infinitely many simple modules. 
For example, for each $\lambda \in \mathbb{C}^*$, the following $M_\lambda$ is a simple module in $\cC_{(C_0,0)}$:
\[M_\lambda:=\begin{tikzcd}
	& 0 \ar[dr] \ar[dl]& \\
	\mathbb{C} \ar[rr, "\lambda"', bend right=10]  & & \mathbb{C} \ar[ll, "1"', bend right=10].
\end{tikzcd}\]
Also, for a fixed $r \in \mathbb{N}$, let $\phi_{r, \lambda}$ be the following $r$-by-$r$ matrix
\[\phi_{r,\lambda} = \begin{bmatrix}
	\lambda & 1 & 0 & \dots & 0 \\
	0 & \lambda & 1 & \dots & 0 \\ 
	0 & 0 & \lambda & \dots & 0 \\
	\vdots & \vdots & \vdots & \ddots & \vdots \\
	0 & 0 & 0 & \dots & \lambda
\end{bmatrix}.\]
Then, the following $M_{\lambda,r}$ is also an indecomposable module:
\[M_{r,\lambda} = \begin{tikzcd}
	& 0 \ar[dr] \ar[dl]& \\
	\mathbb{C}^r \ar[rr, "\phi_{r,\lambda}"', bend right=10]  & & \mathbb{C}^r \ar[ll, "Id_r"', bend right=10]
\end{tikzcd}.\] 
We note that $M_\lambda = M_{1, \lambda}$.
Also, since every square matrix admits the Jordan normal form, one can easily see that indecomposable modules in $\cC_{(C_0,0)}$ are $\{M_{r,\lambda} | r \in \mathbb{N}, \lambda \in \mathbb{C}^*\}$.
It implies that to generate $\Prop \Gamma$, we need infinitely many generators. 

\begin{rmk}
	\label{rmk indecomposable modules in an example}
	We note that a random module in $\cC_{(C_0,0)}$ is given as a direct sum of indecomposable modules $\left\{M_{r,\lambda} | r \in \mathbb{N}, \lambda \in \mathbb{C}^*\right\}$.
\end{rmk}

From the viewpoint of symplectic topology or Theorem \ref{thm compact Fukaya}, the above generators correspond to Lagrangians in the corresponding plumbing space. 
We note that the simple modules $M_x, M_y, M_z$ correspond to Lagrangian spheres.
Let us denote these Lagrangian sphere $L_x, L_y, L_z$. 

Similarly, for each pair $(r, \lambda) \in \mathbb{N} \times \mathbb{C}^*$, one can see $M_{r, \lambda}$ as an immersed Lagrangian with a local system and a bounding cochain as follows: 
Let $\rho_{x,r}$ (resp.\ $\rho_{y,r}$) be the trivial local system of rank $r$ on $L_x$ (resp.\ $L_y$).
Since $L_x$ and $L_y$ have two intersection points, denoted as $p$ and $q$, and since one can assume that $p \in \Hom^1_\cW(L_x,L_y)$ and $q \in \Hom^1_\cW(L_y,L_x)$,  
\[\Hom_\cW^1\left((L_x, \rho_{x,r}), (L_y, \rho_{y,r})\right) = \Hom\left(\mathbb{C}^r, \mathbb{C}^r\right) \otimes \langle p \rangle, \Hom_\cW^1\left((L_y, \rho_{y,r}),(L_x, \rho_{x,r})\right) = \Hom\left(\mathbb{C}^r, \mathbb{C}^r\right) \otimes \langle q \rangle.\]
Then, $M_{r,\lambda}$ is the following immersed Lagrangian with a local system and a bounding cochain 
\[\left( L_x \cup L_y, \rho_{x,r} \oplus \rho_{y,r}, \begin{bmatrix}
	0 & \phi_{r,\lambda} \otimes p \\ Id_r \otimes q & 0
\end{bmatrix}\right).\]

On the other hand, by performing Lagrangian surgeries corresponding to the bounding cochain, one can see $M_{r, \lambda}$ as a Lagrangian homeomorphic to $S^1 \times S^{n-1}$ with a nontrivial local system of rank $r$.  
It can be seen as a higher dimensional analogy of a result in Keating \cite[Sections 1 and 2]{Keating15}.

The proof of Theorem \ref{thm:stab} constructs stability conditions on $\Prop \Gamma$.
Thus, the space of stability condition is not empty. 
Moreover, since its Grothendieck group has infinite rank, thanks to \cite[Theorem 1.2]{bri07}, one can see that the space the space of Bridgeland stability conditions is of infinite dimensional.

Also, Corollary \ref{cor modified stability condition} gives a stability condition of $\Prop \Gamma$ in terms of Definition \ref{dfn modified stability condition}. 
We point out that, as mentioned at the end of Section \ref{sec:stab}, the stability conditions in Joyce's conjecture \cite{joy15} would be the modified version, if the free abelian group in Definition \ref{dfn modified stability condition} is provided as the (torsion free part of the) middle degree homology of the symplectic manifold. 
Since Corollary \ref{cor modified stability condition} constructs such stability conditions, we could expect that the constructed stability conditions would correspond to holomorphic volume forms on the plumbing space, as Joyce \cite{joy15} conjectured. 

Using the above example, we briefly introduce a possible strategy of finding a corresponding holomorphic volume form $\Omega$ such that the stability condition which is related to $\Omega$ by \cite{joy15} is the one constructed by Corollary \ref{cor modified stability condition}.
To do that, let us recall the following facts. 

We first recall that $T^*S^n$ could be seen as a complex submanifold of $\mathbb{C}^{n+1}$. 
More precisely, 
\[T^*S^n = \left\{(z_0, \dots, z_n) \in \mathbb{C}^{n+1} | z_0^2 + \dots z_n^2 =1 \right\}.\]
Then, one can restrict the standard holomorphic volume form $\Omega = dz_0 \wedge \dots \wedge d z_n$ on $\mathbb{C}^{n+1}$ to $T^*S^n$. 
The restriction gives a holomorphic volume form on $T^*S^n$ such that the zero section is a special Lagrangian of phase $0$. 
Let $\Omega_{std}$ denote the holomorphic volume form on $T^*S^n$. 

Secondly, let us fix a stability condition $\sigma$ which is constructed in the proof of Corollary \ref{cor modified stability condition}.
We recall that $\sigma$ is constructed by assigning $z_1 \in \mathbb{C}$ (resp.\ $z_2 \in \mathbb{C}$) to the vertices $x, y$ (resp.\ $z$) such that $\pi \geq \mathrm{arg}(z_1) > \mathrm{arg}(z_2) > 0$. 
Then, the central charge $Z$ satisfies that 
\[Z(L_x) = Z(L_y) = z_1, Z(M_{r,\lambda}) = 2rz_1, Z(L_z) = z_2.\]

On a cotangent bundle $T^*L_x = T^*S^n$, one could define a holomorphic volume form $z_1 \Omega_{std}=: \Omega_x$.
Moreover, according to this holomorphic volume form, the zero section $L_x$ is a special Lagrangian whose central charge value is $z_1$. 
Similarly, on $T^*L_y$ and $T^*L_z$, $\Omega_y:=z_1 \Omega_{std}$ and $\Omega_z:= z_2 \Omega_{std}$ are holomorphic volume forms such that the zero sections are special Lagrangians with central charge values $z_1$ and $z_2$. 

Now, we recall that the plumbing space is obtained by identifying $T^*L_x, T^*L_y, T^*L_z$ in the plumbing regions. 
Thus, if one can identifying $\Omega_x, \Omega_y, \Omega_z$ in the identified regions, one could construct a holomorphic volume form $\Omega$ on the plumbing space, which corresponds to the starting stability condition $\sigma$. 
In this article, we do not prove that the holomorphic volume forms can be identified in the regions, but we expect one can easily prove (or disprove) it since the identified regions are simple: The regions are topologically $\mathbb{D}^n \times \mathbb{D}^n$.

If the above strategy works, plumbing spaces could be a test ground to study {\em Thomas--Yau type conjecture} of \cite{joy15}, i.e., \cite[Conjecture 3.34]{joy15}, which conjectured about the {\em Lagrangian Mean curvature Flow} of a random, admissible Lagrangian in a Calabi--Yau manifold. 
More precisely, for an arbitrary Lagrangian $L$ in a Calabi--Yau manifold, i.e., a symplectic manifold with a compatible holomorphic volume form, Thomas--Yau type conjecture asks if the Lagrangian Mean Curvature Flow of $L$ converges to a combination of special Lagrangians (possibly with singularities). 
If we have a stability condition corresponding to the given holomorphic volume form, then the expected special Lagrangians should correspond to the components of the Harder--Narasimhan tower of $L$. 

\bibliographystyle{amsalpha}
\bibliography{proper_module_and_compact_Fukaya.bib}

\end{document}